\documentclass{amsart}

 
\usepackage{amsmath}
\usepackage{amsfonts, amssymb, amsthm, bbm, upgreek}
\usepackage{thmtools, mathtools}
\usepackage{mathrsfs,physics,commath}
\usepackage{stackrel, array}
\usepackage[shortlabels]{enumitem}
\usepackage{xspace, xcolor, xpatch}
\usepackage{url, hyperref}
\usepackage{tikz-cd}


\newcommand{\mcb}{{\mathcal{B}}}
\newcommand{\mcc}{{\mathcal{C}}}

\newcommand{\mcf}{{\mathcal{F}}}

\newcommand{\mch}{{\mathcal{H}}}

\newcommand{\mcm}{{\mathcal{M}}}

\newcommand{\mcs}{{\mathcal{S}}}
\newcommand{\mct}{{\mathcal{T}}}
\newcommand{\mcu}{{\mathcal{U}}}

\newcommand{\bbc}{{\mathbb{C}}}

\newcommand{\bbf}{{\mathbb{F}}}

\newcommand{\bbn}{{\mathbb{N}}}

\newcommand{\bbr}{{\mathbb{R}}}


\newcommand{\acts}{\curvearrowright}
\newcommand{\1}{\mathbbm{1}}

\newcommand{\vphi}{\varphi}
\newcommand{\ii}{\textup{i}}


\let\norm\undefined 
\DeclarePairedDelimiter\norm{\lVert}{\rVert}

\newtheorem{theorem}{Theorem}

\theoremstyle{plain}
\newtheorem{thm}{Theorem}[section]
\newtheorem{cor}[thm]{Corollary}
\newtheorem{lemma}[thm]{Lemma}
\newtheorem{prop}[thm]{Proposition}
\theoremstyle{definition}
\newtheorem{defn}[thm]{Definition}
\newtheorem{remark}[thm]{Remark}


\hypersetup{
	colorlinks=true,
	linktoc=all,     
	linkcolor=blue,
	citecolor=blue
}


\makeatletter
\let\@wraptoccontribs\wraptoccontribs
\makeatother

\title{A categorical approach to injective envelopes}
\author{Arianna Cecco}

\address{Department of Mathematics, University of Houston, USA}
\dedicatory{with an appendix by David P. Blecher}
\email{ahcecco@central.uh.edu}
\address{Department of Mathematics, University of Houston, USA}
\email{dpbleche@central.uh.edu}

\thanks{AC was supported by the NSF Grant DMS-2155162.}

\begin{document}


\maketitle


\begin{abstract}
    We explore functors between operator space categories, some properties of these functors, and establish relations between objects in these categories and their images under these functors, in particular regarding injectivity and injective envelopes. We also compare the purely categorical definition of injectivity with the `standard' operator theoretical definition. An appendix by D. P. Blecher discusses the unitization of an operator space and its injective envelope.
\end{abstract}


\section{Introduction}

The study of injective envelopes in an `operator' or $C^*$-setting started with Hamana in the `70s, with \cite{hamana79a,hamana79b}, and has many applications. More recent applications (see \cite{kk14, bkko, raum20}) have led to discovering many important properties of groups and group $C^*$-algebras. Since injectivity is categorical in nature, it is reasonable to ask if a similar theory can be developed for other `operator space categories'; for example, a category consisting of real operator spaces (which are currently rising in interest). Further, there are natural functors that arise between these operator space categories, and it makes sense to ask if these functors will preserve such structure. The main concern of this paper is answering these question and in fact, the answer is in the affirmative.  The results can be summarized in an all-encompassing theorem:

\begin{theorem}\label{main}
    Let $\mcf$ be any functor from the diagram below, let $X$ be an object from the domain category of $\mcf$, and let $I(X)$ be an injective envelope of $X$ in that category. Then it follows that $$I(\mcf(X))=\mcf(I(X)).$$
\end{theorem}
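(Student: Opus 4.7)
The plan is to reduce the theorem to a standard uniqueness argument applied individually to each functor in the diagram. Recall that an injective envelope $I(X)$ is characterized, up to a unique isomorphism fixing $X$, as an injective object containing $X$ as a rigid extension (equivalently, as a minimal injective extension). Hence, to prove $\mcf(I(X)) = I(\mcf(X))$, it suffices to verify for each functor $\mcf$ in the diagram that (i) $\mcf$ sends injective objects to injective objects, and (ii) the canonical inclusion $\mcf(X) \hookrightarrow \mcf(I(X))$ is a rigid extension in the target category. Granted these two facts, the standard uniqueness of the injective envelope yields the desired equality.

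For (i) I would proceed via adjointness whenever possible: if $\mcf$ admits a left adjoint that preserves the relevant class of complete isometric embeddings, then $\mcf$ automatically sends injectives to injectives. When no adjoint is at hand, a direct extension argument is available --- given an embedding $V \hookrightarrow W$ in the target category and a morphism $V \to \mcf(I(X))$, I would transfer the extension problem to the source category using the concrete model of $\mcf$, solve it there using the injectivity of $I(X)$, and transport the resulting extension back to the target category.

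For (ii), which I expect to be the main obstacle, the difficulty is that a morphism $\psi : \mcf(I(X)) \to \mcf(I(X))$ restricting to the identity on $\mcf(X)$ need not \emph{a priori} be of the form $\mcf(\varphi)$ for some $\varphi : I(X) \to I(X)$. I would handle this functor by functor: either by identifying $\mathrm{Hom}(\mcf(Y), \mcf(Z))$ with $\mathrm{Hom}(Y,Z)$ when $\mcf$ is fully faithful, or by characterizing the image of $\mcf$ by a natural symmetry (for instance, a conjugate-linear involution in the complexification setting) so that morphisms in the target category compatible with the symmetry pull back to morphisms in the source, at which point rigidity of $X \subseteq I(X)$ may be invoked directly.

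Finally, I would assemble these verifications into the uniform statement, treating each arrow of the diagram separately. For more delicate functors such as unitization, where $\mcf$ interacts less transparently with the envelope construction, the proof would rely on the results of the appendix by Blecher to reconcile $\mcf(I(X))$ with $I(\mcf(X))$.
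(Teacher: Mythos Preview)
Your overall strategy --- verify for each arrow that the functor preserves injectivity and that the induced extension is rigid, then appeal to uniqueness of injective envelopes --- is exactly what the paper does, carrying it out functor by functor in Sections~3 and~4 (with several cases imported from \cite{bck23}). One minor divergence: the paper's rigidity arguments for the forgetful functors are more elementary than your symmetry/pullback sketch. For example, for $\mcf_0 : \mathsf{OSp}^G_{(\bbc,\bbc)} \to \mathsf{OSp}^G_{(\bbc,\bbr)}$, any morphism $T$ in the target category fixing $\mcf_0(X)$ is literally a $G$-equivariant $\bbr$-linear complete contraction, hence already a morphism in $\mathsf{OSp}^G_{(\bbr,\bbr)}$; since $(Z,\kappa)$ is known (from \cite{bck23}) to be an injective envelope there as well, rigidity in the real category forces $T=\textup{Id}$ directly. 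No lifting through an involution or full-faithfulness is needed.

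There is, however, a real gap concerning $\mcf_u$. The appendix does \emph{not} prove $I(\mcf_u(X)) = \mcf_u(I(X))$; it explicitly states that this equality fails in general, since $I(\mcf_u(X))$ is a unital $C^*$-algebra while $\mcf_u(I(X)) = {\mathcal U}_{\rm d}(I(X))$ never is (it contains nonzero nilpotents). What the appendix establishes instead is $I(\mcf_u(X)) = {\mathcal E}(X) = I(\mcs(X))$, whose $1$-$2$ corner recovers $I(X)$. So your plan to invoke the appendix to ``reconcile $\mcf(I(X))$ with $I(\mcf(X))$'' for $\mcf=\mcf_u$ cannot work as you describe: the two objects are genuinely different, and your step~(ii) breaks down because a self-map of ${\mathcal E}(X)$ fixing ${\mathcal U}_{\rm d}(X)$ does not descend to a self-map of $I(X)$ in any way that rigidity of $I(X)$ could control. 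Either $\mcf_u$ is tacitly excluded from the literal scope of Theorem~A, or the equality for that arrow must be read as the corner identification given in the appendix rather than as $\mcf_u(I(X))$.
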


The following diagram contains the categories and natural functors that we will be concerned with in this paper (this diagram will be referred to as the `functorial picture'):

\begin{center}
    \begin{tikzcd}
        \mathsf{OSp}^G_{(\bbr,\bbr)}
        
        \arrow[from=1-1,to=1-3, "\textcolor{red}{\mcf_c*}", shift left,labels=above]
        
        \arrow[from=1-1,to=2-1,"\textcolor{violet}{\mcf_u}",labels=right, shift left]

        
        & & \mathsf{OSp}^G_{(\bbc,\bbc)}
        
        \arrow[from=1-3,to=1-1,"\textcolor{red}{\mcf_0*}", shift left,labels=below]
        
        \arrow[from=1-3, to=1-5, "\mcf_0", shift left, labels=above]

        \arrow[from=1-3,to=2-3,"\textcolor{violet}{\mcf_u}",labels=right, shift left]
        
        & & \mathsf{OSp}^{G}_{(\bbc,\bbr)}\\

        \arrow[from=1-5,to=1-3,"\mcf_c", shift left, labels=below]

        \arrow[from=1-5,to=2-5,"\textcolor{violet}{\mcf_u}",labels=right, shift left]

        \mathsf{OSp}^{\1,G}_{(\bbr,\bbr)}
        
        \arrow[from=2-1,to=2-3, "\textcolor{red}{\mcf_c*}",shift left,labels=above]
        
        \arrow[from=2-1,to=1-1,"\mcf_0",labels=left, shift left]

        \arrow[from=2-1,to=3-1,"\textcolor{red}{\mcf_A*}",labels=right, shift left]
        
        & & \mathsf{OSp}^{\1,G}_{(\bbc,\bbc)}
        
        \arrow[from=2-3,to=2-1,"\textcolor{red}{\mcf_0*}", shift left,labels=below]
        
        \arrow[from=2-3, to=2-5, "\mcf_0", shift left, labels=above]

        \arrow[from=2-3,to=3-3,"\textcolor{red}{\mcf_A*}",labels=right, shift left]

        \arrow[from=2-3,to=1-3,"\mcf_0",labels=left, shift left]
        
        & & \mathsf{OSp}^{\1,G}_{(\bbc,\bbr)}
        
        \arrow[from=2-5,to=2-3,"\mcf_c", shift left, labels=below]

        \arrow[from=2-5,to=3-5,"\mcf_A",labels=right, shift left]

        \arrow[from=2-5,to=1-5,"\mcf_0",labels=left, shift left]
        
        \\

        \mathsf{OSy}^{G}_{(\bbr,\bbr)}
        
        \arrow[from=3-1,to=3-3, "\textcolor{red}{\mcf_c*}",shift left,labels=above]

        \arrow[from=3-1,to=2-1,"\mcf_0",labels=left, shift left]

        \arrow[from=3-1,to=4-1,"\textcolor{red}{\mcf_0*}",labels=right, shift left]
        
        & & \mathsf{OSy}^{G}_{(\bbc,\bbc)}
        
        \arrow[from=3-3,to=3-1,"\textcolor{red}{\mcf_0*}", shift left,labels=below]
        
        \arrow[from=3-3, to=3-5, "\mcf_0", shift left, labels=above]

        \arrow[from=3-3,to=4-3,"\textcolor{red}{\mcf_0*}",labels=right, shift left]

        \arrow[from=3-3,to=2-3,"\mcf_0",labels=left, shift left]
        
        & & \mathsf{OSy}^G_{(\bbc,\bbr)}
        
        \arrow[from=3-5,to=3-3,"\mcf_c", shift left, labels=below]

        \arrow[from=3-5,to=4-5,"\mcf_0",labels=right, shift left]

        \arrow[from=3-5,to=2-5,"\mcf_0",labels=left, shift left]
        \\
        
        \mathsf{OSy}_{(\bbr,\bbr)}
        
        \arrow[from=4-1,to=4-3, "\textcolor{red}{\mcf_c*}",shift left,labels=above]

        \arrow[from=4-1,to=3-1,"\mcf_G",labels=left, shift left]

        \arrow[from=4-1,to=5-1,"\mcf_0",labels=right, shift left]
        
        & & \mathsf{OSy}_{(\bbc,\bbc)}
         \arrow[from=4-3,to=4-1,"\textcolor{red}{\mcf_0*}", shift left,labels=below]
         
        \arrow[from=4-3, to=4-5, "\mcf_0", shift left, labels=above]

        \arrow[from=4-3,to=5-3,"\mcf_0",labels=right, shift left]

        \arrow[from=4-3,to=3-3,"\mcf_G",labels=left, shift left]
        
        & & \mathsf{OSy}_{(\bbc,\bbr)}
        
        \arrow[from=4-5,to=4-3,"\mcf_c", shift left, labels=below]

        \arrow[from=4-5,to=5-5,"\mcf_0",labels=right, shift left]

        \arrow[from=4-5,to=3-5,"\mcf_G",labels=left, shift left]
        \\
        
        \mathsf{OSp}^{\1}_{(\bbr,\bbr)}
        
         \arrow[from=5-1,to=5-3, "\textcolor{red}{\mcf_c*}",shift left,labels=above]

         \arrow[from=5-1,to=4-1,"\textcolor{red}{\mcf_A*}",labels=left, shift left]

         \arrow[from=5-1,to=6-1,"\mcf_0",labels=right, shift left]
        
        & & 
        \mathsf{OSp}^{\1}_{(\bbc,\bbc)}
        
        \arrow[from=5-3,to=5-1,"\textcolor{red}{\mcf_0*}", shift left,labels=below]
        
        \arrow[from=5-3, to=5-5, "\mcf_0", shift left, labels=above]

        \arrow[from=5-3,to=6-3,"\mcf_0",labels=right, shift left]

        \arrow[from=5-3,to=4-3,"\textcolor{red}{\mcf_A*}",labels=left, shift left]

        & & 
        \mathsf{OSp}^{\1}_{(\bbc,\bbr)}
        
        \arrow[from=5-5,to=5-3,"\mcf_c", shift left, labels=below]

        \arrow[from=5-5,to=6-5,"\mcf_0",labels=right, shift left]

         \arrow[from=5-5,to=4-5,"\mcf_A",labels=left, shift left]
        \\

        \mathsf{OSp}_{(\bbr,\bbr)}
        
        \arrow[from=6-1,to=6-3, "\textcolor{red}{\mcf_c*}",shift left,labels=above]

        \arrow[from=6-1,to=5-1,"\textcolor{violet}{\mcf_u}",labels=left, shift left]
        
        
        & & \mathsf{OSp}_{(\bbc,\bbc)}
        
        \arrow[from=6-3,to=6-1,"\textcolor{red}{\mcf_0*}", shift left,labels=below] 
        
        \arrow[from=6-3, to=6-5, "\mcf_0", shift left, labels=above]

        \arrow[from=6-3,to=5-3,"\textcolor{violet}{\mcf_u}",labels=left, shift left]
        
        & & \mathsf{OSp}_{(\bbc,\bbr)}
        
        \arrow[from=6-5,to=6-3,"\mcf_c", shift left, labels=below]

        \arrow[from=6-5,to=5-5,"\textcolor{violet}{\mcf_u}",labels=left, shift left]
    \end{tikzcd}
\end{center}

 The present paper and the recent paper \cite[Section~4]{bck23} contain the work proving Theorem~\ref{main}. In \cite{bck23}, the author (along with their coauthors) discussed the interaction between real $G$-spaces and their complexifications, and proved some relations between the injectivity of the real and the complex categories. In fact, complexification, denoted $\mcf_c$, is functorial and in \cite{bck23}, we showed that $\mcf_c(I(X))=I(\mcf_c(X))$ for a real operator space/system $X$ (resp.\ a real $G$-operator space/system, where $G$ is finite or compact). Moreover, all of the functors which are labeled with an asterisk are proved in \cite[Theorem~4.2, Proposition~4.9, Theorem~4.10, Corollary~4.13, and Theorem~4.16]{bck23}.

This paper is concerned with all of the functors in the diagram not discussed in the previous paper. Section~\ref{s2} will define the functors that are used in the above theorem/diagram and that will be referenced throughout the paper. Further, it will build up basic theory of injectivity for the ``intermediate categories'', which has not been explored in the literature yet. Theorem~\ref{main} will be proved in sections \ref{s3} and \ref{s4}, where Section~\ref{s3} focuses on all of the forgetful functors in the diagram (for which there are many) and where Section~\ref{s4} focuses on all of the rest of the functors. Section~\ref{s5} will explore properties of these functors and Section~\ref{s6} discusses the difference between the standard definition of injectivity and the purely categorical definition of injectivity. Appendix~\ref{appendix} by David Blecher investigates the unitization functor $\mcf_u$ and its properties.

Let us fix some terminology and notation. A \textit{complex operator space} $X$ (or a $\bbc$-operator space) is a subspace of $B(\mch)$ for a complex Hilbert space $\mch$. If $X$ contains the unit, then $X$ is called a \textit{unital complex operator space}. If $X$ is an operator space which is unital and self-adjoint, then $X$ is called a \textit{complex operator system}. A \textit{real operator space} (or an $\bbr$-operator space) is a real subspace of $B(\mch)$ for a real Hilbert space $\mch$. If $X$ contains the unit and is self-adjoint, then $X$ is a \textit{real operator system} (\cite{ruan03a}). 

An \textit{operator space complexification} of a real operator space $X$ is a pair $(X_c,\kappa)$ consisting of a complex operator space $X_c$ and a real linear complete isometry $\kappa:X\to X_c$ such that $X_c=\kappa(X)\oplus\ii\,\kappa(X)$ as a vector space. Usually $\kappa(X)$ is identified with $X$ and write $X_c=X+\ii\, X$. An operator space complexification $X_c=X+\ii\, X$ of a real operator space $X$ is said to be \textit{reasonable} if the map $\theta_X:x+\ii y\mapsto x-\ii y$ is a complete isometry for $x,y\in X$ (\cite{ruan03b}).

Let $X$ be a real operator space and let $\{\norm{\cdot}_n\}$ be the canonical complex operator space norm on $X_c$ given by the identification $$X_c=X+\ii X\cong\left\{ \begin{bmatrix}
    x & -y \\
    y & x
\end{bmatrix}: x,y\in X \right\}\subseteq M_2(X).$$ The following theorem shows that up to complete isometry, $X_c$ is the only reasonable complex extension of the original operator space matrix norm on $X$.

\begin{thm}[{\cite[Theorem~3.1]{ruan03b}}]\label{ruan}
    Let $X$ be a real operator space. If a complex operator space matrix norm $\{|\lVert\,\cdot\,|\rVert_n\}$ on $X_c$ is a reasonable complex extension of the original matrix norm on $X$, then $\{|\lVert\,\cdot\,|\rVert_n\}$ must be equal to the canonical matrix norm $\{\norm{\,\cdot\,}\}$ on $X_c$.
\end{thm}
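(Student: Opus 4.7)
The plan is to show that for any $n$ and any $Z \in M_n(X_c)$ with $Z = X + \ii Y$, $X,Y \in M_n(X)$, the two norms agree. The key idea is that $\operatorname{diag}(Z, \theta_n(Z))$ is scalar-unitarily equivalent to the block $\begin{pmatrix} X & -Y \\ Y & X \end{pmatrix}$, where $\theta_n = \theta_X \otimes \operatorname{id}_{M_n}$ is the entry-wise conjugation. Concretely, with $V = \tfrac{1}{\sqrt{2}}\begin{pmatrix} 1 & \ii \\ \ii & 1\end{pmatrix} \otimes I_n \in M_{2n}$, a direct check gives
\[
V^{\ast}\begin{pmatrix} Z & 0 \\ 0 & \theta_n(Z) \end{pmatrix} V \;=\; \begin{pmatrix} X & -Y \\ Y & X \end{pmatrix}.
\]

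First I would observe that, since $\{|\lVert\cdot|\rVert_n\}$ is a complex operator space matrix norm on $X_c$, conjugation by the scalar unitary $V$ is a (complete) isometry on $(M_{2n}(X_c), |\lVert\cdot|\rVert_{2n})$. Hence
\[
\big|\bigl\lVert \operatorname{diag}(Z, \theta_n(Z)) \bigr|\big\rVert_{2n} \;=\; \Big|\Big\lVert \begin{pmatrix} X & -Y \\ Y & X \end{pmatrix} \Big|\Big\rVert_{2n}.
\]
Next, the right-hand block lies in $M_{2n}(X) \subseteq M_{2n}(X_c)$, and since the canonical embedding $\kappa : X \to (X_c, |\lVert\cdot|\rVert)$ is completely isometric (by the definition of operator space complexification), the norm $|\lVert\cdot|\rVert_{2n}$ restricted to $M_{2n}(X)$ agrees with the original operator space matrix norm on $X$. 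By definition of the canonical complex matrix norm, this equals $\lVert Z \rVert_n$.

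For the left-hand side I would use two standard features of operator space matrix norms: diagonal blocks satisfy $|\lVert \operatorname{diag}(a,b) |\rVert = \max(|\lVert a |\rVert, |\lVert b |\rVert)$, and the reasonableness hypothesis says $\theta_X$ is completely isometric, so $|\lVert \theta_n(Z) |\rVert_n = |\lVert Z |\rVert_n$. Therefore
\[
\big|\bigl\lVert \operatorname{diag}(Z, \theta_n(Z)) \bigr|\big\rVert_{2n} \;=\; \max\!\bigl(|\lVert Z |\rVert_n,\, |\lVert \theta_n(Z)|\rVert_n\bigr) \;=\; |\lVert Z |\rVert_n.
\]
Combining the two equalities yields $|\lVert Z |\rVert_n = \lVert Z \rVert_n$, which is the desired conclusion.

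The only delicate point is the first step: verifying that the scalar unitary $V$ really does intertwine the diagonal form with the real block form. Everything after that is an invocation of Ruan's axioms for operator space matrix norms together with the two hypotheses packaged into ``reasonable complex extension.'' In particular, one should be careful that the reasonableness assumption is used exactly once, to replace $|\lVert \theta_n(Z)|\rVert_n$ by $|\lVert Z |\rVert_n$; without it the argument breaks down, as one could only bound $|\lVert Z |\rVert_n$ above by $\lVert Z \rVert_n$ via the max, but not below.
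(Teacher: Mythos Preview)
The paper does not supply its own proof of this theorem; it is quoted from Ruan's paper \cite{ruan03b} and stated without argument in the introduction. So there is nothing in the present paper to compare against.

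That said, your argument is correct and is in fact essentially Ruan's original proof. The scalar unitary $V=\tfrac{1}{\sqrt{2}}\begin{pmatrix}1 & \ii\\ \ii & 1\end{pmatrix}\otimes I_n$ does intertwine $\operatorname{diag}(Z,\theta_n(Z))$ with $\begin{pmatrix}X & -Y\\ Y & X\end{pmatrix}$ as you claim (a routine $2\times 2$ computation using $Z+\theta_n(Z)=2X$ and $Z-\theta_n(Z)=2\ii Y$), and the remaining steps are exactly the invocations of Ruan's axioms you describe: invariance under scalar unitary conjugation, the $\ell^\infty$-behaviour of block diagonals, and the complete isometry of both $\kappa$ and $\theta_X$. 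Your closing remark about where reasonableness enters is also accurate: the equality $|\lVert\operatorname{diag}(Z,\theta_n(Z))|\rVert_{2n}=\max(|\lVert Z|\rVert_n,|\lVert\theta_n(Z)|\rVert_n)$ alone gives only $|\lVert Z|\rVert_n\leq\lVert Z\rVert_n$, and the reverse inequality requires $|\lVert\theta_n(Z)|\rVert_n=|\lVert Z|\rVert_n$.
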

A unital map between complex or real operator systems is completely positive if and only
if it is completely contractive. If $T:X\to Y$ is a completely contractive (resp.\ unital completely contractive, unital completely positive) $\bbr$-linear map between real operator spaces (resp.\ unital operator spaces, operator systems), then $T_c:X_c\to Y_c$, defined by $x+\ii y\mapsto T(x)+\ii T(y)$ for $x,y\in X$, is a completely contractive (resp.\ unital completely contractive, unital completely positive) $\bbc$-linear map.

For a given group $G$, a \textit{$G$-set} refers to a given action on a set $X$, denoted by $G \acts X$. Given two
$G$-sets $X$ and $Y$ , a \textit{$G$-equivariant map} (or \textit{$G$-map}) is a map $f : X \to Y$ such that $$f(gx)=gf(x),\quad\forall g\in G\text{ and }x\in X.$$ Going forward, let $G$ be a discrete group. A \textit{complex $G$-operator space} (resp.\ \textit{real $G$-operator space}) is a complex (resp.\ real) operator space $X$ such that $G\acts X$ by $\bbc$-linear (resp.\ $\bbr$-linear) surjective complete isometries. If $X$ is unital and the $G\acts X$ by unital surjective complete isometries, then $X$ is called a \textit{unital $G$-operator space}. A \textit{complex $G$-operator system} (resp.\ \textit{real $G$-operator system}) is a complex (resp.\ real) operator system such that $G\acts X$ by unital complete order $\bbc$-linear (resp.\ $\bbr$-linear) isomorphisms. Note that Hamana’s
complex G-modules in \cite{hamana85} are the complex G-operator systems mentioned above.

\begin{lemma}[{\cite[Lemma~4.4]{bck23}}]
    If $X$ is a real $G$-operator space (resp.\ real $G$-operator system), then $X_c$ with action $g(x + \ii y) = gx + \ii gy$ is a complex $G$-operator space (resp.\ complex $G$-operator system), $\theta_X$ is $G$-equivariant, and the canonical projection $X_c\to X$ is a $G$-equivariant complete contractive (resp.\ unital completely positive) $\bbr$-linear map.
\end{lemma}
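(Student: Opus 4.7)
The plan is to verify the three claims in turn: that $X_c$ with the stated action is a complex $G$-operator space (resp.\ system), that $\theta_X$ is $G$-equivariant, and that the canonical projection $\pi: X_c \to X$, $x + \ii y \mapsto x$, has the asserted regularity properties.

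First, I would set up the action. Well-definedness of $g \cdot (x+\ii y) := gx + \ii gy$ is immediate from the real direct sum decomposition $X_c = X \oplus \ii X$. The group axioms and $\bbr$-linearity of each map $z \mapsto gz$ on $X_c$ transfer directly from the $G$-action on $X$; $\bbc$-linearity reduces to the one-line check $g(\ii(x+\ii y)) = g(-y+\ii x) = -gy + \ii gx = \ii(gx + \ii gy)$. The key observation is that this map is nothing other than the complexification $g_c$ of the $\bbr$-linear map $x \mapsto gx$ on $X$, in the sense of the paragraph preceding the lemma.

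Second, to verify the complete isometry (resp.\ unital complete order isomorphism) property of the action, I would invoke that very paragraph: the complexification of an $\bbr$-linear completely contractive (resp.\ unital completely positive) map between real operator spaces (resp.\ systems) is a $\bbc$-linear completely contractive (resp.\ unital completely positive) map. Applying this to $g$ and to $g^{-1}$, together with the identity $(g^{-1})_c = (g_c)^{-1}$, shows that each $g$ acts on $X_c$ as a $\bbc$-linear surjective complete isometry (resp.\ unital complete order isomorphism). Theorem~\ref{ruan} ensures that the canonical complex operator space structure on $X_c$ is the only reasonable one, so no ambiguity of operator space norms arises.

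Third, $G$-equivariance of $\theta_X$ follows from $\theta_X(g(x+\ii y)) = gx - \ii gy = g(x-\ii y) = g\theta_X(x+\ii y)$, and $G$-equivariance of $\pi$ from $\pi(g(x+\ii y)) = gx = g\pi(x+\ii y)$. Complete contractivity (resp.\ unital complete positivity) of $\pi$ follows from the $M_2(X)$-realization $x+\ii y \mapsto \left[\begin{smallmatrix} x & -y \\ y & x \end{smallmatrix}\right]$ recalled just before Theorem~\ref{ruan}: under this identification $\pi$ corresponds to compression to the $(1,1)$-corner, which is completely contractive and, in the system case, unital completely positive. I do not anticipate a genuine obstacle here, as the nontrivial ingredient --- the complexification principle for $\bbr$-linear complete contractions and unital completely positive maps --- is cited as a given in the excerpt, and all other steps are direct verifications.
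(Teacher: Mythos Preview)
Your proposal is correct and the verifications you outline are exactly the natural ones. Note, however, that the paper does not supply its own proof of this lemma: it is stated with a citation to \cite[Lemma~4.4]{bck23} and used as background, so there is no in-paper argument to compare against.
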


 In \cite{hamana85}, Hamana says a complex $G$-operator system $Z$ is \textit{$G$-injective} if for every $G$-equivariant unital completely isometric map $\kappa:X\to Y$ and every $G$-equivariant unital completely positive map $\vphi:X\to Z$, there is a $G$-equivariant unital completely positive map $\tilde{\vphi}:Y\to Z$ such that $\tilde{\vphi}\circ\kappa=\vphi$. This is simply being injective in the category he is considering (where complex $G$-operator systems are the objects and $G$-equivariant unital completely positive $\bbc$-linear maps are the morphisms), so we will drop the ``$G$-'' in $G$-injectivity going forward and just refer to this as being injective in this category. We will also do this for the terms ``$G$-rigid'' and ``$G$-essential''. 
 
 For $X$ in any of our categories, an \textit{extension} of $X$ is a pair $(Z,\kappa)$ consisting of another object from the category, $Z$, and a completely isometric morphism from the category, $\kappa:X\to Z$. An extension $(Z,\kappa)$ of $X$ is:
 \begin{itemize}
    \item \textit{injective} if $Z$ is an injective object in that category;
    
    \item \textit{rigid} if whenever $T:Z\to Z$ is another morphism in that category such that $T\circ\kappa=\kappa$, then $T=\textup{Id}_Z$;

    \item \textit{rigid} if whenever $T:Z\to W$ is a morphism such that $T\circ\kappa$ is a completely isometric morphism, then $T$ is a completely isometric morphism.
 \end{itemize}

An \textit{injective envelope} of an object $X$ is an extension $(Z,\kappa)$ in its category which is injective and essential. In these categories, an injective extension is a rigid extension if and only if it is an essential extension \cite{hamana79b,hamana85,sharma14,ahcthesis}.

\begin{thm}[{\cite[Theorem~4.5]{bck23}}]
    Every real or complex $G$-operator space (resp.\ $G$-operator system) $X$ has a real or complex injective envelope, written $(Z,\kappa)$ (in the category of real or complex G-operator spaces (resp.\ G-operator systems). Here $\kappa:X\to Z$ is a real or complex linear $G$-equivariant complete isometry (resp.\ unital complete order embedding). This injective envelope is unique in the sense that for any other injective envelope $(W,\iota)$ of X in the categories above, there is a $G$-equivariant isomorphism $\psi:Z\to W$ (that is, $\psi$ and $\psi^{-1}$ are morphisms in the category) with $\psi\circ\kappa=\iota$.
\end{thm}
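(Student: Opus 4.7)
The plan is to adapt Hamana's classical construction of injective envelopes to each of the four settings (real/complex, operator space/system) with a $G$-action. The recipe proceeds in three stages: (i) embed $X$ $G$-equivariantly into an injective object $I$ of the relevant category; (ii) via a Zorn/minimality argument, extract a minimal $G$-equivariant ucp (or cc) idempotent $\phi_0:I\to I$ with $\phi_0|_X=\text{id}_X$; and (iii) set $Z=\phi_0(I)$, then verify injectivity and rigidity/essentiality.

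For stage (i), in the complex operator system case I would take $I=\ell^\infty(G,B(\mch))$ with the translation $G$-action and embed $X\hookrightarrow I$ by $x\mapsto(g\mapsto g^{-1}x)$; $G$-injectivity follows from injectivity of $B(\mch)$ together with the usual $\ell^\infty$-averaging. For operator spaces one reduces to the system case via Paulsen's canonical operator system $\mcs(X)$. For the real categories, one either repeats the construction using a real Hilbert space (invoking Ruan's real-operator-space theory) or complexifies: the preceding lemma and Theorem~\ref{ruan} ensure that $X_c$ is a complex $G$-operator space/system and that a complex injective envelope of $X_c$ can be pulled back to produce one over $\bbr$. For stage (ii), I would order the set of $G$-equivariant (unital) ucp/cc idempotent maps $\phi:I\to I$ fixing $X$ by $\phi\preceq\psi\iff\phi\psi=\psi\phi=\phi$; point-weak-$*$ compactness of the unit ball of cb maps guarantees chains have lower bounds, and Zorn yields a minimal $\phi_0$. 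The image $Z=\phi_0(I)$, with the inherited matrix norms (and the Choi--Effros product in the system case), is injective in the category: any morphism from a subobject $Y\subseteq Y'$ into $Z$ first extends via $G$-injectivity of $I$ to $\tilde\psi:Y'\to I$, and then $\phi_0\circ\tilde\psi$ is a morphism $Y'\to Z$ extending the original.

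Rigidity falls out of minimality: any morphism $T:Z\to Z$ with $T\circ\kappa=\kappa$ yields the idempotent $T\circ\phi_0:I\to Z\hookrightarrow I$, which is $G$-equivariant, fixes $X$, and lies $\preceq\phi_0$, forcing $T=\text{id}_Z$; essentiality follows from rigidity by the Hamana/Sharma results cited above. For uniqueness, given a second injective envelope $(W,\iota)$, injectivity of $W$ and $Z$ produces $G$-equivariant morphisms $\psi:Z\to W$ and $\chi:W\to Z$ extending $\iota\circ\kappa^{-1}$ and $\kappa\circ\iota^{-1}$ respectively; rigidity applied to $\chi\circ\psi$ and $\psi\circ\chi$ pins both compositions to the identity, yielding the desired isomorphism with $\psi\circ\kappa=\iota$. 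The main obstacle I anticipate is stage (i) in the real non-system case, where one must verify that $\ell^\infty(G,B(\mch))$ (for real $\mch$) is genuinely injective in the real $G$-operator space category without positivity to lean on; this is where Ruan's real-operator-space machinery and Theorem~\ref{ruan} have to be brought in carefully, either to transport the complex injectivity back to $\bbr$ via complexification or to carry out the minimality argument intrinsically over $\bbr$.
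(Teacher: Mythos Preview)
Your overall strategy is the right one and matches what the paper does: the statement is quoted from \cite{bck23}, and the analogous in-paper version is proved simply by pointing to Hamana's construction in \cite[Theorem~2.5]{hamana85}. Embedding $X$ into $\ell^\infty(G,B(\mch))$ via $j(x)(s)=s^{-1}x$, running a minimality argument, and reading off rigidity and uniqueness is exactly Hamana's recipe.

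That said, two of your technical steps do not work as written. First, in stage~(ii) you order the \emph{idempotents} by $\phi\preceq\psi\iff\phi\psi=\psi\phi=\phi$ and appeal to point-weak-$*$ compactness for lower bounds of chains. But a weak-$*$ limit of idempotents need not be idempotent (composition is not jointly continuous in this topology), so Zorn does not apply to this poset directly. Hamana's actual argument runs Zorn on the induced \emph{seminorms} $p_\phi([x_{ij}])=\|[\phi(x_{ij})]\|$ over \emph{all} $X$-maps $\phi$ (not just idempotents); compactness then gives a minimal seminorm, and one shows afterwards that any $\phi_0$ realizing it is automatically idempotent and absorbing (i.e.\ $\psi\phi_0=\phi_0$ for every $X$-map $\psi$). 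Second, your rigidity step asserts that $T\circ\phi_0$ is an idempotent with $T\circ\phi_0\preceq\phi_0$, forcing $T=\text{id}_Z$. While $T\circ\phi_0\preceq\phi_0$ holds in your order, $T\circ\phi_0$ is \emph{not} idempotent in general: $(T\phi_0)^2=T^2\phi_0$, and $T^2=T$ is exactly what you are trying to prove. The correct argument uses the absorbing property above: since $T\phi_0$ is an $X$-map on $I$, minimality gives $T\phi_0=\phi_0$, whence $T=\text{id}_Z$ on $Z=\phi_0(I)$.

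Once you replace the idempotent ordering by the seminorm ordering and use the absorbing property for rigidity, the rest of your outline (injectivity of $Z$ as a retract of $I$, essentiality from rigidity, and uniqueness by exchanging extensions and applying rigidity to both composites) goes through exactly as you describe, in all four real/complex operator space/system variants.
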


For any $X\subseteq B(\mch)$, we may define a $G$-action on $\ell^{\infty}(G,X)$
and $\ell^{\infty}(G, B(\mch))$ by $(tf)(s) = f(t^{-1}s)$ for $s, t\in G$. There is also an involution on
$\ell^{\infty}(G, B(\mch)): f^*(s) = f(s)^*$. For a $G$-operator space (resp. $G$-operator system) $X\subseteq B(\mch)$, we have the $G$-equivariant inclusions
$X \subseteq \ell^{\infty}(G, X) \subseteq \ell^{\infty}(G, B(H))$,
where the first of these is the map $j(x)(s) = s
^{-1}x$ for $s \in G, x \in X$. One can check that $j:X\to \ell^{\infty}(G,X)$ is a $G$-equivariant complete isometry (resp.\ completely contractive, unital completely contractive, unital completely positive) map and that $\ell^{\infty}(G,X)$ is injective as a real or complex $G$-operator space (resp.\ $G$-operator system) and a real or complex operator space (resp.\ $G$-operator system) if and only if $X$ is an injective real or complex operator space (resp.\ operator system).

\begin{lemma}[{\cite[Lemma~4.7]{bck23}}]
    Let $X$ be a real or complex $G$-operator space (resp.\ real or complex $G$-operator system). Then $X$ is injective in the category of real or complex $G$-operator systems and $G$-equivariant $\bbc$-linear or $\bbr$-linear morphisms if and only if $X$ is injective in the category consisting of  real or complex operator spaces (resp.\ real or complex operator system) and there is a $G$-equivariant morphism $\psi : \ell^{\infty}(G, X) \to X$ such that $\phi\circ j = \textup{Id}_X$.
\end{lemma}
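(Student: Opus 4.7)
The plan is to treat each direction of the biconditional separately. The forward direction will read off both conclusions from $G$-injectivity by applying it to carefully chosen test embeddings, while the reverse direction will combine the retraction $\psi$ with non-equivariant injectivity of $X$ via a twist that symmetrises an arbitrary non-equivariant extension into a $G$-equivariant one.

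For the forward direction, assume $X$ is $G$-injective. Since $j_X:X\to\ell^{\infty}(G,X)$ is a $G$-equivariant completely isometric (resp.\ unital complete order) embedding, applying $G$-injectivity of $X$ to the morphism $\textup{Id}_X:X\to X$ will produce a $G$-equivariant morphism $\psi:\ell^{\infty}(G,X)\to X$ with $\psi\circ j_X=\textup{Id}_X$. For non-equivariant injectivity, fix any non-equivariant embedding $X\subseteq B(\mch)$; composing $j_X$ with the pointwise $G$-equivariant inclusion $\ell^{\infty}(G,X)\hookrightarrow\ell^{\infty}(G,B(\mch))$ then gives a $G$-equivariant completely isometric embedding of $X$ into $\ell^{\infty}(G,B(\mch))$, and this ambient object is non-equivariantly injective by the $\ell^{\infty}(G,-)$ injectivity criterion stated just before the lemma, applied to the non-equivariantly injective $B(\mch)$. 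A second use of $G$-injectivity of $X$ will extend $\textup{Id}_X$ along this embedding to a $G$-equivariant morphism $\Psi:\ell^{\infty}(G,B(\mch))\to X$; forgetting the $G$-structure, this exhibits $X$ as a non-equivariant completely contractive (resp.\ ucp) retract of a non-equivariantly injective object, so $X$ itself is non-equivariantly injective.

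For the reverse direction, suppose $X$ is non-equivariantly injective and $\psi$ is given. Given a $G$-equivariant completely isometric embedding $\iota:A\hookrightarrow B$ and a $G$-equivariant morphism $\vphi:A\to X$, first use non-equivariant injectivity of $X$ to extend $\vphi$ to a (non-equivariant) morphism $\tilde{\vphi}:B\to X$, and then define $\Phi:B\to\ell^{\infty}(G,X)$ by $\Phi(b)(s)=\tilde{\vphi}(s^{-1}b)$. A short verification shows $\Phi$ is $G$-equivariant, namely $\Phi(tb)(s)=\tilde{\vphi}(s^{-1}tb)=\tilde{\vphi}((t^{-1}s)^{-1}b)=\Phi(b)(t^{-1}s)$, and each evaluation $b\mapsto\tilde{\vphi}(s^{-1}b)$ is a morphism (composition of the isometric $G$-action with $\tilde{\vphi}$), so $\Phi$ inherits complete contractivity (resp.\ ucp). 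The composite $\psi\circ\Phi:B\to X$ is then a $G$-equivariant morphism; it extends $\vphi$, because for $a\in A$ the equivariance of $\vphi$ forces $\Phi(a)(s)=\vphi(s^{-1}a)=s^{-1}\vphi(a)=j_X(\vphi(a))(s)$, so $\Phi(a)=j_X(\vphi(a))$ and therefore $\psi(\Phi(a))=\vphi(a)$.

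The creative step is the twist $\Phi(b)(s)=\tilde{\vphi}(s^{-1}b)$ in the reverse direction: it is engineered precisely so that $G$-equivariance of $\vphi$ on $A$ forces $\Phi|_A$ to factor through $j_X$, allowing the retraction $\psi$ to recover $\vphi$. The main subtlety in the forward direction is that an ordinary non-equivariant injective such as $B(\mch)$ need not carry a $G$-action compatible with that of $X$, so the detour through $\ell^{\infty}(G,B(\mch))$ is essential: this single $G$-module is simultaneously the target of a canonical $G$-equivariant completely isometric embedding of $X$ and is non-equivariantly injective, which is exactly what is needed to transfer $G$-injectivity into non-equivariant injectivity.
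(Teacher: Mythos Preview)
Your proof is correct and is essentially the standard Hamana argument. The paper does not actually prove this lemma: it is cited from \cite{bck23}, and the closely related Proposition~\ref{crginjiff} is likewise dispatched with ``Follows similarly to \cite[Lemma~2.2 and Remark~2.3]{hamana85}.'' What you have written is precisely that argument spelled out in full---the retraction off $\ell^\infty(G,B(\mch))$ for the forward direction, and the twist $\Phi(b)(s)=\tilde\vphi(s^{-1}b)$ followed by $\psi$ for the reverse---so there is nothing to contrast.
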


\section*{Acknowledgement}
The author would like to give many thanks to David P. Blecher and Mehrdad Kalantar, their PhD advisors, for all of the input, time, effort, patience, and support they have given while working on this project. The author would also like to thank Caleb Barnett for his helpful comments and suggestions.


\section{Categories and functors to be considered}\label{s2}


\subsection{Category and functor notation}

Throughout the paper, $\bbf_i$ is either $\bbr$ or $\bbc$ for $i=1,2$. The notation $\mathsf{OSp}_{(\bbf_1,\bbf_2)}$ will be used to denote the category whose objects are $\bbf_1$-operator spaces and whose morphisms are completely contractive $\bbf_2$-linear maps. The notation $\mathsf{OSy}_{(\bbf_1,\bbf_2)}$ will be used to denote the category whose objects are $\bbf_1$-operator systems and whose morphisms are unital completely positive $\bbf_2$-linear maps. If the notation additionally has a superscript of ``$G$'' (resp.\ ``$\1$''), then the objects are $G$-operator spaces/systems (resp.\ unital operator spaces) and the morphisms are additionally $G$-equivariant (resp.\ unital). Note that the superscript ``$\1$'' is reserved for operator spaces specifically, since operator systems are unital by definition.

The categories with ``$(\bbc,\bbc)$" (resp.\ $(\bbr,\bbr)$,\,$(\bbc,\bbr)$) will be called complex categories (resp.\ real categories, intermediate categories), injectivity in these category may be referred to as complex injectivity (resp.\ real injectivity, intermediate injectivity), and an injective envelope of an object in these categories may be referred to as a complex injective envelope (resp.\ a real injective envelope, an intermediate injective envelope). The case $(\bbr,\bbc)$ will never be considered for any category.

The different functors that will be considered are
\begin{enumerate}
    \item the forgetful functor $\mcf_0$,
    \item the trivial $G$-action functor $\mcf_G$,
    \item the Arveson functor $\mcf_A$,
    \item the complexification functor $\mcf_c$. 
\end{enumerate}

The forgetful functor $\mcf_0$ could have various combinations of domain and co-domain, but the structure that is forgotten should be clear. For example, $$\mcf_0:\mathsf{OSy}_{(\bbc,\bbc)}\to\mathsf{OSy}_{(\bbr,\bbr)}$$ forgets the complex structure on the objects and on the morphisms. That is, a complex operator system is mapped to itself, but considered only as a real operator system and a unital completely positive $\bbc$-linear map is mapped to itself, but considered only as a unital completely positive $\bbr$-linear map.

The trivial $G$-action functor $\mcf_G$ is the functor which takes an $\bbf_1$-operator space (resp.\ $\bbf_1$-unital operator space, $\bbf_1$-operator system) $X$ to itself, but now with $G$ acting on $X$ trivially, and which takes a completely contractive (resp.\ unital completely contractive, unital completely positive) $\bbf_2$-linear map $u:X\to Y$ to itself as a trivially $G$-equivariant completely contractive (resp.\ unital completely contractive, unital completely positive) $\bbf_2$-linear map.

The Arveson functor $\mcf_{A}$ is the functor that maps a unital $\bbf_1$-operator space $X$ to the canonical (essentially unique) $\bbf_1$-operator system $X+X^{\star}$ (\cite[1.3.7]{blm04}) and maps a unital completely contractive $\bbf_2$-linear map $u:X\to \textup{Ran}(u)$ to the canonical unital completely positive $\bbf_2$-linear map $\tilde{u}:X+X^{\star}\to \textup{Ran}(u)+\textup{Ran}(u)^{\star}$, where $\tilde{u}(x+y^*)=u(x)+u(y)^*$ \cite[Lemma~1.3.6]{blm04}. In the case that a unital operator space is a unital $G$-operator space, there is a canonical $G$-action on $\mcf_A(X)$ given by $g(x+y^*)=gx+(gy)^*$. Since $X\cap X^{\star}$ is an operator system and any unital completely contractive map on an operator system is $^*$-linear, this action is well-defined and turns $\mcf_A(X)$ into a $G$-operator system.

The complexification functor $\mcf_{c}$ is the functor that takes a real operator space (resp.\ unital operator space, operator system) $X$ and maps it to the unique reasonable complexification $X+\ii X$ as in Theorem~\ref{ruan} and takes an $\bbr$-linear morphism $T:X\to Y$ and maps it to it is complexification $T_c:X+\ii X\to Y+\ii Y$, where $T_c(x+\ii y)=T(x)+\ii T(y)$ for $x,y\in X$. 

\subsection{The intermediate categories}

\begin{defn}
	A complex operator space $Z$ (resp.\ unital complex operator space, complex operator system) is injective in the category $\mathsf{OSp}_{(\bbc,\bbr)}$ (resp.\ $\mathsf{OSp}^{\1}_{(\bbc,\bbr)}$, $\mathsf{OSy}_{(\bbc,\bbr)}$) if for any complex operator spaces (resp.\ unital complex operator spaces, complex operator systems) $X$ and $Y$ with a complete isometry (resp.\ unital complete isometry) $\kappa:X\to Y$, any completely contractive (resp.\ unital completely contractive, unital completely positive) $\bbr$-linear map $\phi:X\to Z$, there exists a completely contractive (resp.\ unital completely contractive, unital completely positive) $\bbr$-linear extension $\tilde{\phi}:Y\to Z$ such that $\norm{\tilde{\phi}}_{cb}=\norm{\phi}_{cb}$.
\end{defn}

\begin{thm}\label{bhinj}
	Let $\mch$ be a complex Hilbert space. Then $B(\mch)$ is injective in $\mathsf{OSp}_{(\bbc,\bbr)}$ (resp.\ $\mathsf{OSp}^{\1}_{(\bbc,\bbr)}$, $\mathsf{OSy}_{(\bbc,\bbr)}$).
\end{thm}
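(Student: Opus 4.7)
The strategy is to reduce intermediate injectivity of $B(\mch)$ to the already-known real injectivity of $B(\mch_{\bbr})$ (where $\mch_{\bbr}$ is $\mch$ with its underlying real Hilbert space structure), by constructing an explicit UCP $\bbr$-linear projection from $B(\mch_{\bbr})$ onto $B(\mch)$.

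First I would set up the embedding. Let $J \in B(\mch_{\bbr})$ denote the $\bbr$-linear isometry $\xi \mapsto \ii\xi$, so $J^2 = -I$ and $J^* = -J$. As a real unital $^*$-subalgebra of $B(\mch_{\bbr})$, $B(\mch)$ coincides with the commutant of $J$, namely $\{T \in B(\mch_{\bbr}) : TJ = JT\}$, and one checks directly that this inclusion preserves matrix norms (since $M_n(B(\mch))$ and $M_n(B(\mch_{\bbr}))$ carry the same operator-norm structure on $\mch^n$) as well as the positive cone (the positive elements of $B(\mch)$ are exactly the positive elements of $B(\mch_{\bbr})$ that happen to commute with $J$). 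Thus $\mcf_0(B(\mch))$ sits inside $B(\mch_{\bbr})$ as a real operator subspace/system in all three senses.

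Next, define the $\bbr$-linear map
$$E : B(\mch_{\bbr}) \to B(\mch_{\bbr}), \qquad E(T) = \tfrac{1}{2}\bigl(T - JTJ\bigr) = \tfrac{1}{2}\bigl(T + J^*TJ\bigr).$$
A quick computation shows $E(T)$ commutes with $J$, that $E$ restricts to the identity on $B(\mch)$, and that $E$ is idempotent with range exactly $B(\mch)$. Moreover, $T \mapsto J^*TJ$ is a unital complete order isomorphism of $B(\mch_{\bbr})$ (it is conjugation by the real orthogonal element $J$), and the identity map is too, so $E$ is a convex combination of two UCP $\bbr$-linear maps. Hence $E$ is UCP and $\bbr$-linear; in particular it is completely contractive and unital.

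The proof of the theorem then reads: given any extension problem $\kappa : X \to Y$ (a (unital) complete isometry of complex operator spaces/systems) and a completely contractive (resp.\ UCC, UCP) $\bbr$-linear map $\phi : X \to B(\mch)$, apply $\mcf_0$ to view the whole diagram in the corresponding real category, use the well-known real injectivity of $B(\mch_{\bbr})$ (Ruan's real Arveson/Wittstock extension theorem in \cite{ruan03a,ruan03b}) to extend $\phi$ to a map $\tilde{\phi} : Y \to B(\mch_{\bbr})$ with the same properties, and then set $\hat{\phi} = E \circ \tilde{\phi}$. Since $E$ fixes $B(\mch)$ pointwise, $\hat{\phi}$ extends $\phi$; since $E$ is UCP $\bbr$-linear, $\hat{\phi}$ inherits the required completely contractive/UCC/UCP property. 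The norm equality $\norm{\hat{\phi}}_{cb} = \norm{\phi}_{cb}$ follows because $\hat{\phi}$ extends $\phi$ (giving $\geq$) while being a composition of a completely contractive extension and a completely contractive projection (giving $\leq$).

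The only genuine point of care — and what I would treat as the main obstacle — is verifying that the forgetful embedding $B(\mch) \hookrightarrow B(\mch_{\bbr})$ preserves all the relevant structure at the matricial level, in particular that the positive cones match in the operator system case (so that the UCP hypothesis on $\phi$ transfers cleanly to the real setting before one invokes real injectivity). Everything else — the verification of properties of $E$, the application of real injectivity, and assembling the extension — is routine.
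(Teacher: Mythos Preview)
Your argument is correct but takes a more laborious route than the paper. The paper's proof is essentially one line: view the given intermediate extension problem as a real extension problem (every complex operator space is in particular a real operator space, and the morphisms in the intermediate category are already $\bbr$-linear), then invoke \cite[Lemma~4.1]{bck23}, which asserts that $B(\mch)$ for complex $\mch$ is already injective in $\mathsf{OSp}_{(\bbr,\bbr)}$ (resp.\ $\mathsf{OSp}^{\1}_{(\bbr,\bbr)}$, $\mathsf{OSy}_{(\bbr,\bbr)}$). The desired extension then lands in $B(\mch)$ directly, and no auxiliary projection is needed.

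Your approach instead re-derives that cited fact from scratch, by embedding $B(\mch)$ into $B(\mch_{\bbr})$ as the commutant of $J$ and exhibiting the explicit conditional expectation $E(T) = \tfrac{1}{2}(T + J^*TJ)$. This is a perfectly good way to \emph{prove} \cite[Lemma~4.1]{bck23} and has the virtue of being self-contained, but for the purposes of this theorem it duplicates work already packaged into the citation. Your flagged ``main obstacle'' (matricial and order compatibility of $B(\mch) \hookrightarrow B(\mch_{\bbr})$) is routine and is exactly the content absorbed by that lemma; once one grants that $B(\mch)$ is real injective, the detour through $B(\mch_{\bbr})$ and $E$ becomes unnecessary.
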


\begin{proof}
	Let $\kappa:X\to Y$ be an $\bbr$-linear complete isometry (resp.\ unital complete isometry) and $\vphi:X\to B(\mch)$ be a completely contractive (resp.\ unital completely contractive, unital completely positive) $\bbr$-linear map. Consider $X$, $Y$, and $B(\mch)$ as real operator spaces (resp.\ unital real operator spaces, real operator systems). By \cite[Lemma~4.1]{bck23}, $B(\mch)$ is injective in $\mathsf{OSp}_{(\bbr,\bbr)}$ (resp.\ $\mathsf{OSp}^{\1}_{(\bbr,\bbr)}$, $\mathsf{OSy}_{(\bbr,\bbr)}$). Hence, the desired extension exists.
\end{proof}

\begin{cor}\label{intermedinjiff}
	A complex operator space (resp.\ unital complex operator space, complex operator system) $Z\subset B(\mch)$ for a complex Hilbert space $\mch$ is injective in $\mathsf{OSp}_{(\bbc,\bbr)}$ (resp.\ $\mathsf{OSp}^{\1}_{(\bbc,\bbr)}$, $\mathsf{OSy}_{(\bbc,\bbr)}$) if and only if it is the range of a completely contractive (resp.\ unital completely contractive, unital completely positive) idempotent $\bbr$-linear map from $B(\mch)$ onto $Z$.
\end{cor}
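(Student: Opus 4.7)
The plan is to follow the standard ``Arveson-style'' projection argument, using Theorem~\ref{bhinj} as the key input and applying it in parallel for each of the three variants (operator space, unital operator space, operator system). In each case the proof reduces to two routine verifications, where the only subtlety is making sure that the relevant structure ($\bbr$-linearity, complete contractivity, unitality, complete positivity) is carried through the compositions.

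For the forward direction, I would assume $Z$ is injective in the intermediate category and apply the injectivity property to the $\bbr$-linear (unital, completely positive) identity map $\textup{Id}_Z:Z\to Z$, together with the inclusion $\iota:Z\hookrightarrow B(\mch)$, which is a (unital) $\bbc$-linear complete isometry and hence a fortiori an $\bbr$-linear (unital) complete isometry. This yields an extension $P:B(\mch)\to Z$ with the required properties (completely contractive, resp.\ unital completely contractive, unital completely positive, and $\bbr$-linear), which satisfies $P\circ\iota=\textup{Id}_Z$; viewing $P$ as a map $B(\mch)\to B(\mch)$ via $\iota$, this gives $P\circ P=P$, so $P$ is the desired idempotent with range $Z$.

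For the converse, I would assume such an $\bbr$-linear idempotent $P:B(\mch)\to Z$ exists and check injectivity directly. Given any (unital) $\bbr$-linear complete isometry $\kappa:X\to Y$ and any completely contractive (resp.\ unital completely contractive, unital completely positive) $\bbr$-linear map $\vphi:X\to Z$, compose $\vphi$ with the inclusion $\iota:Z\hookrightarrow B(\mch)$ to obtain an $\bbr$-linear map $\iota\circ\vphi:X\to B(\mch)$ of the same kind. Theorem~\ref{bhinj} furnishes an $\bbr$-linear extension $\Psi:Y\to B(\mch)$ with the same properties. Then $\tilde{\vphi}:=P\circ\Psi:Y\to Z$ is $\bbr$-linear and has the desired properties since each relevant class (completely contractive, unital, completely positive) is closed under composition. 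Finally, $P|_Z=\textup{Id}_Z$ gives $\tilde{\vphi}\circ\kappa=P\circ\Psi\circ\kappa=P\circ\iota\circ\vphi=\vphi$, verifying that $\tilde{\vphi}$ extends $\vphi$, and $\norm{\tilde\vphi}_{cb}=\norm{\vphi}_{cb}$ follows from $\norm{P}_{cb}\le 1$ and the fact that $\tilde\vphi$ extends a map of cb-norm $\norm{\vphi}_{cb}$.

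There is no real obstacle here: the argument is a direct transcription of the classical characterization of injective operator spaces/systems in the complex setting, and the only point requiring care is to verify that each structural property used in the hypothesis (completely contractive, unital, completely positive, $\bbr$-linear) is indeed preserved by composition with $\iota$ and $P$, and by the extension produced in Theorem~\ref{bhinj}, for each of the three parenthetical variants simultaneously.
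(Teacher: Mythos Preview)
Your proposal is correct and is precisely the standard projection argument the paper has in mind; indeed, the paper states this result as a corollary of Theorem~\ref{bhinj} without proof, and the omitted proof is exactly the Arveson-style argument you outline.
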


Let $X$ and $Y$ be complex operator spaces (resp.\ unital complex operator spaces, complex operator systems) and let the map $\kappa:X\to Y$ be an $\bbr$-linear complete  isometry (resp.\ unital complete  isometry). Then the pair $(Y,\kappa)$ is called an \textit{extension of} $X$ in $\mathsf{OSp}_{(\bbc,\bbr)}$ (resp.\ $\mathsf{OSp}^{\1}_{(\bbc,\bbr)}$, $\mathsf{OSy}_{(\bbc,\bbr)}$). 

\begin{defn}
    If $(Y,\kappa)$ is an extension of the complex operator space (resp.\ unital complex operator space, complex operator system) $X$ in $\mathsf{OSp}_{(\bbc,\bbr)}$ (resp.\ $\mathsf{OSp}^{\1}_{(\bbc,\bbr)}$, $\mathsf{OSy}_{(\bbc,\bbr)}$), then $(Y,\kappa)$ is a \textit{rigid extension} of $X$ if for any complete contraction (resp.\ unital complete contraction, unital completely positive) $\vphi:Y\to Y$,  $\vphi\circ\kappa=\kappa$ implies $\vphi=\textup{Id}_Y$. The extension $(Y,\kappa)$ is said to be an \textit{essential extension of} $X$ if whenever $u:Y\to Z$ is a completely contractive (resp.\ unital completely contractive, unital completely positive) $\bbr$-linear map for some complex operator space (resp.\ unital complex operator space, complex operator system) $Z$ such that $u\circ\kappa$ is a (resp.\ unital) $\bbr$-linear complete isometry, then $u$ is a (resp.\ unital) $\bbr$-linear complete isometry. The extension $(Y,\kappa)$ is a \textit{injective envelope of} $X$ in $\mathsf{OSp}_{(\bbc,\bbr)}$ (resp.\ $\mathsf{OSp}^{\1}_{(\bbc,\bbr)}$, $\mathsf{OSy}_{(\bbc,\bbr)}$) provided that $(Y,\kappa)$ is an injective and an essential extension.
 \end{defn}
 
A similar construction to Hamana's in \cite{hamana79b} can be used to prove that any complex operator space (resp.\ unital operator space, operator system) has an injective envelope in the intermediate category.

\begin{thm}
	Any complex operator space (resp.\ unital complex operator spaces, complex operator systems) $X$ has an injective envelope in $\mathsf{OSp}_{(\bbc,\bbr)}$ (resp.\ $\mathsf{OSp}^{\1}_{(\bbc,\bbr)}$, $\mathsf{OSy}_{(\bbc,\bbr)}$). It is unique in the sense that for any two injective envelopes of $X$ say $(Z,\iota)$ and $(Z',\iota')$, there is an $\bbr$-linear complete isometry (resp.\ unital complete isometry) $\psi:Z\to Z'$ with $\psi\circ\iota=\iota'$.
\end{thm}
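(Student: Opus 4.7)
The plan is to imitate Hamana's classical construction from \cite{hamana79b}, taking advantage of Theorem~\ref{bhinj}: $B(\mch)$ is injective in each of the three intermediate categories. First I embed $X$ completely isometrically (unitally, where appropriate) into $B(\mch)$ for some complex Hilbert space $\mch$ and consider the set $\mcs$ of all $\bbr$-linear completely contractive (resp.\ unital completely contractive, unital completely positive) maps $\vphi\colon B(\mch)\to B(\mch)$ that restrict to the identity on $X$. This set is nonempty (it contains $\textup{Id}_{B(\mch)}$), convex, and compact in the point weak$^*$ topology, since the unit ball of completely bounded maps on $B(\mch)$ is weak$^*$ compact and the identity-on-$X$ condition is weak$^*$-closed.

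Next I would run the standard Hamana minimality argument: equip $\mcs$ with the preorder $\vphi\preceq\psi$ iff $\vphi\circ\psi=\vphi$ (for the unital operator space case one may alternatively pass through the Arveson functor $\mcf_A$ of Section~\ref{s2}, and for the purely operator space case one can descend from the Paulsen system to reduce to the operator system setting). Zorn's lemma, combined with point weak$^*$ compactness to ensure chains have lower bounds, furnishes a $\preceq$-minimal element $\vphi_0\in\mcs$. Minimality forces $\vphi_0\circ\vphi_0=\vphi_0$, so $\vphi_0$ is an $\bbr$-linear completely contractive (resp.\ UCC, UCP) idempotent. By Corollary~\ref{intermedinjiff} its range $Z:=\vphi_0(B(\mch))$ is injective in the intermediate category, and $\kappa:=\vphi_0|_X$ identifies $X$ as a subspace of $Z$ via an $\bbr$-linear complete isometry (unital when appropriate).

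To see $(Z,\kappa)$ is an injective envelope, I would verify rigidity: given any completely contractive (resp.\ UCC, UCP) $\bbr$-linear map $u\colon Z\to Z$ with $u\circ\kappa=\kappa$, the composition $u\circ\vphi_0$ lies in $\mcs$ and satisfies $u\circ\vphi_0\preceq\vphi_0$, so by minimality $u\circ\vphi_0=\vphi_0$, whence $u=\textup{Id}_Z$. As recalled in the introduction, an injective rigid extension in these categories is automatically essential; that argument uses only norm and injectivity, and so transfers verbatim to the $\bbr$-linear setting. Uniqueness follows from a standard back-and-forth: given two injective envelopes $(Z,\iota)$ and $(Z',\iota')$, injectivity provides morphisms $\psi\colon Z\to Z'$ and $\psi'\colon Z'\to Z$ extending the identifications $\iota\mapsto\iota'$ and $\iota'\mapsto\iota$, and rigidity forces $\psi'\circ\psi=\textup{Id}_Z$ and $\psi\circ\psi'=\textup{Id}_{Z'}$, so $\psi$ is the desired $\bbr$-linear complete isometry.

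The main obstacle is simply checking that every step of Hamana's machinery, traditionally phrased with $\bbc$-linear maps, carries over to $\bbr$-linear maps. The ingredients needed (weak$^*$ compactness of the cb unit ball, closure of $\mcs$ under composition, Zorn's lemma, and the rigid-implies-essential step) are insensitive to whether the scalar field is $\bbr$ or $\bbc$, since they rely only on norms, positivity, and algebraic identities; the main care is required in the purely operator space (non-self-adjoint) case, where passing through the Paulsen system reduces matters to the operator system setting already handled by \cite{hamana79b,hamana85}.
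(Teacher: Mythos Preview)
Your proposal is correct and takes essentially the same approach as the paper: the paper does not give an explicit proof but simply states that ``a similar construction to Hamana's in \cite{hamana79b} can be used,'' and your outline is precisely that adaptation, leveraging Theorem~\ref{bhinj} and Corollary~\ref{intermedinjiff} in the expected way. The only minor remark is that your composition preorder $\vphi\preceq\psi\iff\vphi\circ\psi=\vphi$ is a slight variant of Hamana's original seminorm ordering $p_\vphi\leq p_\psi$, but both lead to the same minimal projections and the compactness/Zorn argument goes through identically in the $\bbr$-linear setting.
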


\begin{remark}
        $(1)$ Every real injective envelope $(Z,\kappa)$ of a complex operator space $X$ can be made into an intermediate injective envelope by giving $Z$ a complex operator space structure (see \cite[Section~3]{b23}). This is done by noting that if $(W,j)$ is a complex injective envelope of $X$, then it also is a real injective envelope of $X$ and there exists a bijective completely isometric map $\psi:W\to Z$ such that $\psi^{-1}$ is completely isometric. Then, define complex multiplication on $Z$ by $\ii z \coloneqq \psi(\ii \psi^{-1}(z))$ for all $z\in Z$. Letting $\tilde{Z}=Z$ as a complex operator space, it follows that $W\cong \tilde{Z}$ as complex operator spaces (since $\psi$ and $\psi^{-1}$ are $\bbc$-linear using this new multiplication). Hence, $(\tilde{Z},\kappa)$ is an intermediate injective envelope of $X$.
        \\[0.8ex]
        $(2)$ Every real injective envelope $(Z,\kappa)$ of a complex unital operator space or complex operator system $X$ can be made into an intermediate injective envelope for $X$. To do this, take the real injective envelope $(Z,\kappa)$ and consider $Z$ as a real operator space and make the same argument as above to consider it as a complex operator space. Then, appeal to \cite[Theorem~4.14]{bck23} to get the result.
        \\[0.8ex]
        $(3)$ Intermediate injective envelopes of a complex operator space (resp. unital operator space, operator system) $X$ are essentially the same as the real injective envelopes of $X$ which are also complex operator spaces (resp. unital operator spaces, operator systems)
    
\end{remark}

\subsection{The $G$-categories}

\begin{defn}
	An $\bbf_1$-$G$-operator space (resp.\ unital $\bbf_1$-$G$-operator space, $\bbf_1$-$G$-operator system) $Z$ is injective in the category $\mathsf{OSp}^{G}_{(\bbf_1,\bbf_2)}$ (resp.\ $\mathsf{OSp}^{\1,G}_{(\bbf_1,\bbf_2)}$, $\mathsf{OSy}^{G}_{(\bbf_1,\bbf_2)}$) if for any  $\bbf_1$-$G$-operator spaces (resp.\ unital $\bbf_1$-$G$-operator spaces, $\bbf_1$-$G$-operator systems) $X$ and $Y$ with a $G$-equivariant $\bbf_2$-liner complete isometry (resp.\ unital complete isometry) $\kappa:X\to Y$, and any  $G$-equivariant completely contractive (resp.\ unital completely contractive, unital completely positive) $\bbf_2$-linear map $\vphi:X\to Z$, there exists a $G$-equivariant completely contractive (resp.\ unital completely contractive, unital completely positive) $\bbf_2$-linear map $\tilde{\vphi}:Y\to Z$ such that $\tilde{\vphi}\circ\kappa=\vphi$ and $\norm{\tilde{\vphi}}_{cb}=\norm{\vphi}_{cb}$. 
\end{defn}

Let $X$ and $Z$ be  $\bbf_1$-$G$-operator spaces (unital $\bbf_1$-$G$-operator spaces, $\bbf_1$-$G$-operator systems) and let the map $\kappa:X\to Z$ be a $G$-equivariant $\bbf_2$-linear complete isometry (resp.\ unital complete isometry). Then the pair $(Z,\kappa)$ is called an \textit{extension} of $X$ in $\mathsf{OSp}^{G}_{(\bbf_1,\bbf_2)}$ (resp.\ $\mathsf{OSp}^{\1,G}_{(\bbf_1,\bbf_2)}$, $\mathsf{OSy}^{G}_{(\bbf_1,\bbf_2)}$).

 \begin{defn}
    The extension $(Z,\kappa)$ of $X$ in $\mathsf{OSp}^{G}_{(\bbf_1,\bbf_2)}$ (resp.\ $\mathsf{OSp}^{\1,G}_{(\bbf_1,\bbf_2)}$, $\mathsf{OSy}^{G}_{(\bbf_1,\bbf_2)}$) is a \textit{rigid extension} of $X$ 
    if whenever $\vphi:Z\to Z$ is a $G$-equivariant completely contractive (resp.\ unital completely contractive, unital completely positive) $\bbf_2$-linear map such that $\vphi\circ\kappa=\kappa$, then $\vphi=\textup{Id}_Z$. The extension $(Z,\kappa)$ is said to be an \textit{essential extension of} $X$, if whenever $\vphi:Z\to W$ is a $G$-equivariant completely contractive (resp.\ unital completely contractive, unital completely positive) $\bbf_2$-linear map, for some $\bbf_1$-$G$-operator space (resp.\ unital $\bbf_1$-$G$-operator space, $\bbf_1$-$G$-operator system) $W$, such that $\vphi\circ\kappa$ is a $G$-equivariant $\bbf_2$-linear complete isometry (resp.\ unital complete isometry), then $\vphi$ is a $G$-equivariant $\bbr$-linear complete isometry (resp.\ unital complete isometry).
\end{defn}

\begin{lemma}\label{crellgvginj}
	Let $G$ be a discrete group. If $X\in\mathsf{OSp}_{(\bbf_1,\bbf_2)}$ (resp.\ $\mathsf{OSp}^{\1}_{(\bbf_1,\bbf_2)}$, $\mathsf{OSy}_{(\bbf_1,\bbf_2}$) is injective, then the $\bbf_1$-$G$-operator space (resp.\ unital $\bbf_1$-$G$-operator space, $\bbf_1$-$G$-operator system) $\ell^{\infty}(G,X)$ is injective in $\mathsf{OSp}^{G}_{(\bbf_1,\bbf_2)}$ (resp.\ $\mathsf{OSp}^{\1,G}_{(\bbf_1,\bbf_2)}$, $\mathsf{OSy}^{G}_{(\bbf_1,\bbf_2)}$) and in $\mathsf{OSp}_{(\bbf_1,\bbf_2)}$ (resp.\ $\mathsf{OSp}^{\1}_{(\bbf_1,\bbf_2)}$, $\mathsf{OSy}_{(\bbf_1,\bbf_2)}$).     In particular, $\ell^{\infty}(G,B(\mch))$ is injective in both $\mathsf{OSp}^{G}_{(\bbf_1,\bbf_2)}$ (resp.\ $\mathsf{OSp}^{\1,G}_{(\bbf_1,\bbf_2)}$, $\mathsf{OSy}^{G}_{(\bbf_1,\bbf_2)}$) and $\mathsf{OSp}_{(\bbf_1,\bbf_2)}$ (resp.\ $\mathsf{OSp}^{\1}_{(\bbf_1,\bbf_2)}$, $\mathsf{OSy}_{(\bbf_1,\bbf_2)}$).
\end{lemma}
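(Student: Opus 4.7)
The plan is to treat the two claims in parallel for all nine variants of category. I will first prove injectivity of $\ell^\infty(G,X)$ in the non-equivariant category (which is essentially that a product of injectives is injective), and then bootstrap to the $G$-equivariant case by the standard trick of extending the ``evaluation at the identity'' map and then pulling it back along the $G$-action. The final statement about $\ell^\infty(G,B(\mch))$ is then just an invocation of Theorem~\ref{bhinj} (together with the corresponding real/complex analogues from \cite{bck23}) since $B(\mch)$ is injective in each non-equivariant category.

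For the non-equivariant case: given a completely contractive (resp.\ unital completely contractive, unital completely positive) $\bbf_2$-linear map $\vphi\colon A \to \ell^\infty(G,X)$ and an $\bbf_2$-linear complete isometry (resp.\ unital complete isometry) $\kappa\colon A\to B$, for each $g\in G$ consider the coordinate projection $\pi_g\colon\ell^\infty(G,X)\to X$, $\pi_g(f)=f(g)$, which is a complete contraction (resp.\ unital completely contractive, unital completely positive) $\bbf_2$-linear map. By injectivity of $X$, each $\pi_g\circ\vphi\colon A\to X$ extends to a morphism $\widetilde{\psi}_g\colon B\to X$ of the appropriate type with the same cb-norm. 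Assembling the extensions coordinatewise, $\widetilde{\vphi}(b)(g)\coloneqq\widetilde{\psi}_g(b)$, yields a morphism $B\to\ell^\infty(G,X)$ extending $\vphi$, with cb-norm bounded by $\sup_g\norm{\widetilde{\psi}_g}_{cb}\leq\norm{\vphi}_{cb}$ (and unitality/positivity preserved componentwise).

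For the $G$-equivariant case: suppose $\kappa\colon A\to B$ is a $G$-equivariant $\bbf_2$-linear complete isometry (resp.\ unital complete isometry) of $\bbf_1$-$G$-objects and $\vphi\colon A\to\ell^\infty(G,X)$ is a $G$-equivariant morphism of the appropriate type. Let $\pi\coloneqq\pi_e$ be evaluation at the identity of $G$; then $\pi\circ\vphi\colon A\to X$ is a morphism of the appropriate type (now forgetting the $G$-structure), so by injectivity of $X$ in the non-equivariant category there is an extension $\widetilde{\psi}\colon B\to X$ of the same type with $\widetilde{\psi}\circ\kappa = \pi\circ\vphi$ and $\norm{\widetilde{\psi}}_{cb}\leq\norm{\vphi}_{cb}$. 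Define
\[
\widetilde{\vphi}\colon B\to\ell^\infty(G,X),\qquad \widetilde{\vphi}(b)(g)\coloneqq\widetilde{\psi}(g^{-1}b),\quad b\in B,\ g\in G.
\]
This factors as $B\xrightarrow{j}\ell^\infty(G,B)\xrightarrow{\widetilde{\psi}_*}\ell^\infty(G,X)$, where $j(b)(g)=g^{-1}b$ is a $G$-equivariant complete isometry (resp.\ unital complete isometry) and $\widetilde{\psi}_*$ is postcomposition by $\widetilde{\psi}$; since $\widetilde{\psi}$ is a morphism of the appropriate type, so is $\widetilde{\psi}_*$, and hence so is $\widetilde{\vphi}$, with $\norm{\widetilde{\vphi}}_{cb}\leq\norm{\widetilde{\psi}}_{cb}\leq\norm{\vphi}_{cb}$.

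It remains to verify $G$-equivariance and that $\widetilde{\vphi}\circ\kappa=\vphi$. For $t\in G$,
\[
\widetilde{\vphi}(tb)(g) = \widetilde{\psi}(g^{-1}tb) = \widetilde{\psi}((t^{-1}g)^{-1}b) = \widetilde{\vphi}(b)(t^{-1}g) = (t\widetilde{\vphi}(b))(g),
\]
using $(tf)(s)=f(t^{-1}s)$. For $a\in A$, using the $G$-equivariance of $\vphi$ and the identity $(g^{-1}\vphi(a))(e)=\vphi(a)(g)$,
\[
\widetilde{\vphi}(\kappa(a))(g) = \widetilde{\psi}(g^{-1}\kappa(a)) = \widetilde{\psi}(\kappa(g^{-1}a)) = \pi(\vphi(g^{-1}a)) = (g^{-1}\vphi(a))(e) = \vphi(a)(g).
\]
Thus $\widetilde{\vphi}$ is the required $G$-equivariant extension of $\vphi$. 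The main obstacle, such as it is, is simply bookkeeping across the nine category variants: verifying that the coordinate projection $\pi_e$ and the diagonal embedding $j$ are of the appropriate morphism class in each flavor (complete contraction, unital complete contraction, unital completely positive, over $\bbr$ or $\bbc$), and that cb-norms/unitality/positivity pass through pointwise assembly. These checks are direct from the definitions of the $G$-action and the structure on $\ell^\infty(G,X)$. The ``in particular'' statement then follows by taking $X=B(\mch)$ and applying Theorem~\ref{bhinj} and its real analogue.
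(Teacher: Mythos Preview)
Your proof is correct and follows exactly the approach the paper has in mind: the paper's proof is simply a citation to \cite[Lemma~2.2]{hamana85}, and you have written out the details of Hamana's argument (coordinatewise extension for the non-equivariant case, and the ``extend at the identity, then translate by the $G$-action'' trick for the equivariant case) adapted to the appropriate objects and morphisms.
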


\begin{proof}
	The proof is the same as in \cite[Lemma~2.2]{hamana85}, considering the appropriate objects and morphisms.
\end{proof}

Similarly as in \cite[Remark~2.3]{hamana85}, any $\bbf_1$-$G$-operator space (resp.\ unital $G$-operator space, $G$-operator system) $X\subseteq B(\mch)$, the map $j:X\to\ell^{\infty}(G,B(\mch))$, $j(x)(t)=t^{-1} x$ for $x\in X\text{ and }t\in G$, is a $G$-equivariant completely isometric (resp.\ unital completely isometric) $\bbf_2$-linear map with $j(X)\subseteq\ell^{\infty}(G,X)\subseteq\ell^{\infty}(G,B(\mch))$. The next lemma follows as a consequence of this and is essential in the proof of existence of an injective envelope.

\begin{lemma}
    Each $\bbf_1$-$G$-operator space (resp.\ unital $G$-operator space, $G$-operator system) has an injective extension in $\mathsf{OSp}^{G}_{(\bbf_1,\bbf_2)}$ (resp.\ $\mathsf{OSp}^{\1,G}_{(\bbf_1,\bbf_2)}$, $\mathsf{OSy}^{G}_{(\bbf_1,\bbf_2)}$), namely $(\ell^{\infty}(G,B(\mch)),j)$. 
\end{lemma}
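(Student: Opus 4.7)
The plan is to assemble the pieces already established in the excerpt; essentially no new work is needed beyond organizing them.

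First, given an object $X$ in one of our $G$-categories, realize it concretely as $X \subseteq B(\mch)$ for an appropriate (real or complex) Hilbert space $\mch$; this is possible by the very definition of $\bbf_1$-operator spaces and operator systems. Next, invoke the paragraph immediately preceding the lemma: the map $j : X \to \ell^{\infty}(G, B(\mch))$ defined by $j(x)(t) = t^{-1}x$ is a $G$-equivariant $\bbf_2$-linear complete isometry (resp.\ unital complete isometry), with image contained in $\ell^{\infty}(G, X) \subseteq \ell^{\infty}(G, B(\mch))$. So $(\ell^{\infty}(G, B(\mch)), j)$ is an extension of $X$ in the appropriate $G$-category in the sense of the definition given earlier.

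It therefore remains only to verify that $\ell^{\infty}(G, B(\mch))$ is injective in the relevant category. For this I would apply Lemma \ref{crellgvginj} with the choice $X = B(\mch)$: by Theorem \ref{bhinj} (together with the analogous classical results for the complex and real non-intermediate categories), $B(\mch)$ is injective in $\mathsf{OSp}_{(\bbf_1,\bbf_2)}$ (resp.\ $\mathsf{OSp}^{\1}_{(\bbf_1,\bbf_2)}$, $\mathsf{OSy}_{(\bbf_1,\bbf_2)}$), and so Lemma \ref{crellgvginj} directly yields that $\ell^{\infty}(G, B(\mch))$ is injective in $\mathsf{OSp}^{G}_{(\bbf_1,\bbf_2)}$ (resp.\ $\mathsf{OSp}^{\1,G}_{(\bbf_1,\bbf_2)}$, $\mathsf{OSy}^{G}_{(\bbf_1,\bbf_2)}$). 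Combining this with the previous step gives that $(\ell^{\infty}(G, B(\mch)), j)$ is an injective extension of $X$, as required.

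There is no real obstacle here; the statement is essentially a bookkeeping corollary of the paragraph preceding it plus Lemma \ref{crellgvginj}. The only mildly delicate point worth flagging is ensuring that when $X$ is a unital operator space or operator system, one checks that the map $j$ is additionally unital, i.e.\ $j(1)(t) = t^{-1} \cdot 1 = 1$ for every $t \in G$, which uses that $G$ acts by unital maps. This justifies treating the unital and operator-system cases uniformly within the same argument.
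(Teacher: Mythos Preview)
Your proposal is correct and matches the paper's approach exactly: the paper states this lemma without proof, treating it as an immediate consequence of the preceding paragraph (which gives the map $j$ and its properties) together with Lemma~\ref{crellgvginj} applied to $B(\mch)$. Your write-up simply makes explicit the two ingredients the paper leaves implicit, and your remark about checking that $j$ is unital in the unital/operator-system cases is a reasonable point of care.
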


\begin{prop}\label{crginjiff}
	The space $X$ is injective in $\mathsf{OSp}^{G}_{(\bbf_1,\bbf_2)}$ (resp.\ $\mathsf{OSp}^{\1,G}_{(\bbf_1,\bbf_2)}$, $\mathsf{OSy}^{G}_{(\bbf_1,\bbf_2)}$) if and only if $X$ is injective in $\mathsf{OSp}_{(\bbf_1,\bbf_2)}$ (resp.\ $\mathsf{OSp}^{\1}_{(\bbf_1,\bbf_2)}$, $\mathsf{OSy}_{(\bbf_1,\bbf_2)}$) and there exists a surjective $G$-equivariant completely contractive (resp.\ unital completely contractive, unital completely positive) idempotent $\bbf_2$-linear map $\phi:\ell^{\infty}(G,X)\to j(X)$ with $\vphi\circ j=\textup{Id}_X$.
\end{prop}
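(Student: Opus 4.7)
The plan is to follow Hamana's classical argument \cite[Lemma~2.4]{hamana85}, adapted to the broader collection of categories in the statement. The two key ingredients are the $G$-equivariant completely isometric embedding $j:X\to\ell^{\infty}(G,X)$ and Lemma~\ref{crellgvginj}, which gives that $\ell^{\infty}(G,B(\mch))$ is injective simultaneously in the $G$-equivariant and non-$G$-equivariant versions of each category.

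For the forward direction, suppose $X$ is $G$-injective. To produce $\phi$, I would apply $G$-injectivity of $X$ to the $G$-equivariant complete isometry $j:X\to\ell^{\infty}(G,X)$ together with $\textup{Id}_X:X\to X$: this yields a $G$-equivariant morphism $\tilde{\phi}:\ell^{\infty}(G,X)\to X$ with $\tilde{\phi}\circ j=\textup{Id}_X$, and $\phi:=j\circ\tilde{\phi}$ is the required idempotent onto $j(X)$. To establish non-$G$ injectivity of $X$, I would embed $X\subseteq B(\mch)$ in the relevant category and apply $G$-injectivity to the $G$-equivariant complete isometry $j:X\to\ell^{\infty}(G,B(\mch))$, obtaining a $G$-equivariant retraction $\rho:\ell^{\infty}(G,B(\mch))\to X$. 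Since $\ell^{\infty}(G,B(\mch))$ is injective in the non-$G$ category by Lemma~\ref{crellgvginj}, $X$ is a retract of an injective object in that category, hence itself injective there.

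For the reverse direction, suppose $X$ is injective in the non-$G$ category and that such a $\phi$ exists. Given $G$-equivariant data $\kappa:Y\to Z$ and $\psi:Y\to X$, I would first extend $\psi$ to a (non-equivariant) morphism $\tilde{\psi}:Z\to X$ using non-$G$ injectivity of $X$. Then I would define $\Psi:Z\to\ell^{\infty}(G,X)$ by $\Psi(z)(t)=\tilde{\psi}(t^{-1}z)$; a direct check shows that $\Psi$ is $G$-equivariant and in the required morphism class, and that $G$-equivariance of $\psi$ forces $\Psi\circ\kappa=j\circ\psi$. Composing with $\phi$ and identifying $j(X)$ with $X$, the map $\tilde{\phi}\circ\Psi:Z\to X$ is then the desired $G$-equivariant extension of $\psi$.

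The main obstacle is verifying that $\Psi$ lies in the correct morphism class uniformly across all six variants of the statement: complete contractivity follows from the $G$-action being by complete isometries, unitality from the action preserving the unit (so $\Psi(1)(t)=\tilde{\psi}(t^{-1}\cdot 1)=\tilde{\psi}(1)=1$), and complete positivity in the operator system cases from the equivalence, for unital maps between operator systems, of complete contractivity and complete positivity. A minor bookkeeping point is the identification $j(X)\cong X$ used to interpret $\phi\circ j=\textup{Id}_X$; one passes freely between the retraction $\tilde{\phi}:\ell^{\infty}(G,X)\to X$ with $\tilde{\phi}\circ j=\textup{Id}_X$ and the idempotent $\phi=j\circ\tilde{\phi}$ onto $j(X)$.
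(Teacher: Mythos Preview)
Your proposal is correct and follows exactly the approach the paper intends: the paper's own proof is simply the one-line remark ``Follows similarly to \cite[Lemma~2.2 and Remark~2.3]{hamana85}, considering appropriate objects and morphisms,'' and what you have written is precisely that Hamana argument spelled out in the generality of the six categories at hand. The only cosmetic discrepancy is that you cite Lemma~2.4 of \cite{hamana85} rather than Lemma~2.2 and Remark~2.3, but the content is the same.
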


\begin{proof}
    Follows similarly to \cite[Lemma~2.2 and Remark~2.3]{hamana85}.
\end{proof}

\begin{defn}\label{gopsyinjenv}
	The pair $(Z,\kappa)$ is an \textit{injective envelope of} $X$ in $\mathsf{OSp}^{G}_{(\bbf_1,\bbf_2)}$(resp.\ $\mathsf{OSp}^{\1,G}_{(\bbf_1,\bbf_2)}$, $\mathsf{OSy}^{G}_{(\bbf_1,\bbf_2)}$) if it is both an injective and an essential extension of $X$ in $\mathsf{OSp}^{G}_{(\bbf_1,\bbf_2)}$ (resp.\ $\mathsf{OSp}^{\1,G}_{(\bbf_1,\bbf_2)}$, $\mathsf{OSy}^{G}_{(\bbf_1,\bbf_2)}$).
\end{defn}

Any complex $G$-operator space (resp.\ unital $G$-operator space, $G$-operator system) has an injective envelope in the intermediate $G$-category using a similar construction to Hamana's in \cite{hamana85}.

\begin{thm}
	Every $\bbf_1$-$G$-operator space (resp.\ unital $G$-operator space, $G$-operator system) $X$ has an injective envelope in $\mathsf{OSp}^{G}_{(\bbf_1,\bbf_2)}$ (resp.\ $\mathsf{OSp}^{\1,G}_{(\bbf_1,\bbf_2)}$, $\mathsf{OSy}^{G}_{(\bbf_1,\bbf_2)}$). It is unique in the sense that for any two injective envelopes of $X$, say $(Z, \iota)$ and $(Z',\iota')$, there is a $G$-equivariant completely isometric (resp.\ unital completely isometric) $\bbf_2$-linear map $\psi: Z \rightarrow Z'$ with $\psi \circ \iota=\iota'$. 
\end{thm}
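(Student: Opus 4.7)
The plan is to adapt Hamana's construction from \cite{hamana85} to each of the nine $G$-categories uniformly. First, embed $X\subseteq B(\mch)$ into the injective object $I=\ell^{\infty}(G,B(\mch))$ via the canonical $G$-equivariant complete isometry $j$ (Lemma~\ref{crellgvginj}), so an injective $G$-extension is already in hand. Consider the family $\mathcal{S}$ of morphisms $\phi\colon I\to I$ in the ambient category (so $G$-equivariant, $\bbf_2$-linear, and completely contractive / UCC / UCP as appropriate) satisfying $\phi\circ j=j$. Then $\mathcal{S}\neq\emptyset$ since $\mathrm{Id}_I\in\mathcal{S}$, and we order it by the matrix seminorms: $\phi\leq\psi$ iff $\norm{\phi(z)}_n\leq\norm{\psi(z)}_n$ for every $n$ and every $z\in M_n(I)$. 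A Banach--Alaoglu argument on the CB unit ball of maps $I\to I$ shows that chains in $\mathcal{S}$ admit lower bounds given by weak-$\ast$ cluster points, using that $\bbf_2$-linearity, $G$-equivariance, unitality, and positivity all pass to weak-$\ast$ limits, and that the matrix norms are weak-$\ast$ lower semicontinuous; Zorn then produces a minimal $\phi_0\in\mathcal{S}$.

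Minimality forces $\phi_0$ to be idempotent: $\phi_0^2\in\mathcal{S}$ with $\phi_0^2\leq\phi_0$, and the standard Hamana argument upgrades the equality of seminorms to $\phi_0\circ\phi_0=\phi_0$. Set $Z=\phi_0(I)$. Then $\phi_0$ presents $Z$ as the range of an idempotent morphism from an injective object, so by (the $G$-analog of) Proposition~\ref{crginjiff}, $Z$ is injective in both the $G$-category and the corresponding non-$G$ category. Essentiality of $(Z,j)$ follows from minimality: given a morphism $u\colon Z\to W$ with $u\circ j$ a complete isometry, one extends a left inverse of $u\circ j$ (provided by injectivity of $I$ applied to the image of $W$ inside an ambient $B(\mch')$) to obtain a morphism in $\mathcal{S}$ dominated by $\phi_0$; minimality then prevents $u$ from shrinking any matrix norm on $Z$, so $u$ is a complete isometry. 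Uniqueness follows classically: for two envelopes $(Z,\iota)$ and $(Z',\iota')$, injectivity of $Z'$ extends $\iota'$ to $\psi\colon Z\to Z'$ with $\psi\circ\iota=\iota'$, essentiality of $(Z,\iota)$ forces $\psi$ to be a complete isometry, and a symmetric application of injectivity and essentiality of $(Z',\iota')$ yields the inverse.

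The main obstacle is the intermediate $G$-categories $\mathsf{OSp}^G_{(\bbc,\bbr)}$, $\mathsf{OSp}^{\1,G}_{(\bbc,\bbr)}$, $\mathsf{OSy}^G_{(\bbc,\bbr)}$: a minimal $\phi_0$ is a priori only $\bbr$-linear, so $Z=\phi_0(I)$ need not be a complex subspace of the complex operator space $I$. Following the strategy of Remark~(1) in the non-$G$ setting, I would sidestep this by first invoking \cite[Theorem~4.5]{bck23} to obtain the complex $G$-injective envelope $(W,j_W)$ of $X$, which is simultaneously a real $G$-injective envelope when the complex structure is forgotten. Comparing $(W,j_W)$ with the real $G$-injective envelope $(Z,j_Z)$ produced by the Hamana construction above yields a $G$-equivariant $\bbr$-linear bijective complete isometry $\psi\colon W\to Z$; transporting the complex structure via $\ii z:=\psi(\ii\,\psi^{-1}(z))$ turns $Z$ into a complex $G$-operator space for which $\psi$ becomes a $\bbc$-linear $G$-equivariant complete isometry. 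One then checks, using Theorem~\ref{bhinj} and the fact that real $G$-injectivity is already at hand, that $(Z,j_Z)$ with its new complex structure is an intermediate $G$-injective envelope of $X$, and that its uniqueness is inherited from the uniqueness of $W$.
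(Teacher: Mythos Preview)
Your proof is correct and follows the same route the paper points to: run Hamana's minimal $X$-projection argument from \cite{hamana85} inside $\ell^{\infty}(G,B(\mch))$, with the appropriate class of morphisms in each of the nine categories. Your explicit handling of the intermediate $G$-categories---flagging that the range of an $\bbr$-linear idempotent need not be a complex subspace and repairing this by transporting the complex structure from the complex $G$-envelope supplied by \cite{bck23}---is exactly the workaround the paper records (for the non-$G$ case) in the Remark following the intermediate existence theorem; the paper's one-line proof of the present statement leaves this step implicit.
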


\begin{proof}
    Follows as in \cite[Theorem~2.5]{hamana85}, considering appropriate objects and morphisms.
\end{proof}

\section{Forgetful Functors between operator categories}\label{s3}


\subsection{From complex $G$-categories to intermediate $G$-categories}
Consider the functor $\mcf_0$ from the category $\mathsf{OSp}^{G}_{(\bbc,\bbc)}$ (resp.\ $\mathsf{OSp}^{\1,G}_{(\bbc,\bbc)}$, $\mathsf{OSy}^{G}_{(\bbc,\bbc)}$) to the category $\mathsf{OSp}^{G}_{(\bbc,\bbr)}$ (resp.\ $\mathsf{OSp}^{\1,G}_{(\bbc,\bbr)}$, $\mathsf{OSy}^{G}_{(\bbc,\bbr)}$).

\begin{lemma}\label{gcpltogintlem}
	If $Z\in\mathsf{OSp}^{G}_{(\bbc,\bbc)}$ (resp.\ $\mathsf{OSp}^{\1,G}_{(\bbc,\bbc)}$, $\mathsf{OSy}^{G}_{(\bbc,\bbc)}$) is injective, then $\mcf_{0}(Z)$ is injective.
\end{lemma}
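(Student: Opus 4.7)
\medskip
\noindent\textbf{Proof proposal.} The plan is to reduce the $G$-equivariant statement to two easier facts: that $Z$ remains injective in the non-equivariant intermediate category $\mathsf{OSp}_{(\bbc,\bbr)}$, and that the $G$-equivariant idempotent onto $j(Z)$ guaranteed by complex $G$-injectivity survives when we forget $\bbc$-linearity of morphisms. The characterizations in Corollary~\ref{intermedinjiff} and Proposition~\ref{crginjiff} make both of these reductions essentially automatic, because every $\bbc$-linear map is $\bbr$-linear.

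First I would invoke the ``$\Rightarrow$'' direction of Proposition~\ref{crginjiff} applied to the hypothesis that $Z$ is injective in $\mathsf{OSp}^{G}_{(\bbc,\bbc)}$ (resp.\ $\mathsf{OSp}^{\1,G}_{(\bbc,\bbc)}$, $\mathsf{OSy}^{G}_{(\bbc,\bbc)}$). This yields (i) that $Z$ is injective in the ambient non-equivariant complex category $\mathsf{OSp}_{(\bbc,\bbc)}$ (resp.\ $\mathsf{OSp}^{\1}_{(\bbc,\bbc)}$, $\mathsf{OSy}_{(\bbc,\bbc)}$), and (ii) a $G$-equivariant completely contractive (resp.\ unital completely contractive, unital completely positive) $\bbc$-linear idempotent $\phi:\ell^{\infty}(G,Z)\to j(Z)$ with $\phi\circ j=\textup{Id}_Z$.

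Next I would handle the non-equivariant intermediate injectivity. Realizing $Z\subseteq B(\mch)$ for a complex Hilbert space $\mch$, condition (i) provides a completely contractive (resp.\ unital completely contractive, unital completely positive) $\bbc$-linear idempotent $\pi:B(\mch)\to Z$. Since any $\bbc$-linear map is $\bbr$-linear, $\pi$ also witnesses, via Corollary~\ref{intermedinjiff}, that $\mcf_0(Z)$ is injective in $\mathsf{OSp}_{(\bbc,\bbr)}$ (resp.\ $\mathsf{OSp}^{\1}_{(\bbc,\bbr)}$, $\mathsf{OSy}_{(\bbc,\bbr)}$). Similarly, the map $\phi$ from (ii) is in particular a $G$-equivariant completely contractive (resp.\ unital completely contractive, unital completely positive) $\bbr$-linear idempotent onto $j(Z)$ with $\phi\circ j=\textup{Id}_Z$.

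Finally I would apply the ``$\Leftarrow$'' direction of Proposition~\ref{crginjiff} in the intermediate $G$-setting $\mathsf{OSp}^{G}_{(\bbc,\bbr)}$ (resp.\ $\mathsf{OSp}^{\1,G}_{(\bbc,\bbr)}$, $\mathsf{OSy}^{G}_{(\bbc,\bbr)}$): the two ingredients collected in the previous paragraph are exactly what that proposition requires, so $\mcf_0(Z)$ is injective in the target category. There is really no single hard step here; the only point that requires minor care is verifying that the $\ell^{\infty}(G,\cdot)$-characterization of Proposition~\ref{crginjiff} is indeed available in the intermediate categories $(\bbc,\bbr)$, which it is, since that proposition was stated uniformly for $(\bbf_1,\bbf_2)$. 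The argument is insensitive to which of the three categorical variants (general, unital, or self-adjoint) one works in, because at each step the relevant structure on $\phi$ and $\pi$ (complete contractivity, unitality, or complete positivity) is preserved verbatim by forgetting $\bbc$-linearity.
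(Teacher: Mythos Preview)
Your proposal is correct, and it takes a genuinely different route from the paper's proof. The paper argues directly from the definition of injectivity: given the extension problem in the intermediate $G$-category, it invokes \cite[Corollary~4.12]{bck23} to say that $Z$, being injective in $\mathsf{OSp}^{G}_{(\bbc,\bbc)}$ (resp.\ the unital or system variant), is equivalently injective in the corresponding \emph{real} $G$-category; then it simply views the complex $G$-spaces $X,Y,Z$ as real $G$-spaces and reads off the desired $\bbr$-linear $G$-equivariant extension from real $G$-injectivity. Your argument instead unpacks both the hypothesis and the conclusion via the $\ell^{\infty}(G,\cdot)$-characterization of Proposition~\ref{crginjiff} together with the idempotent characterization of Corollary~\ref{intermedinjiff}, and observes that the witnessing idempotents, being $\bbc$-linear, are in particular $\bbr$-linear. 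The paper's route is shorter and leans on the nontrivial equivalence of complex and real $G$-injectivity established in \cite{bck23}; your route is more self-contained within the present paper and makes transparent exactly which structural data survives the forgetful functor, at the cost of a slightly longer argument. Either approach handles all three variants uniformly.
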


\begin{proof}
	Assume a complex $G$-operator space $Z$ (resp.\ unital $G$-operator space, $G$-operator system) is injective in $\mathsf{OSp}^{G}_{(\bbc,\bbc)}$ (resp.\ $\mathsf{OSp}^{\1,G}_{(\bbc,\bbc)}$, $\mathsf{OSy}^{G}_{(\bbc,\bbc)}$). Also, let $X$ and $Y$ be complex $G$-operator spaces (resp.\ unital $G$-operator spaces, $G$-operator systems) and let $\kappa:X\to Y$ be a $G$-equivariant $\bbr$-linear complete isometry (resp.\ unital complete isometry), and let $\varphi:X\to Z$ be a $G$-equivariant completely contractive (resp.\ unital completely contractive, unital completely positive) $\bbr$-linear map. By \cite[Corollary~4.12]{bck23}, $Z$ is equivalently injective in $\mathsf{OSy}^{G}_{(\bbr,\bbr)}$. Therefore, considering $X$, $Y$, and $Z$ as real $G$-operator systems, there exists a $G$-equivariant $\bbr$-linear extension $\tilde{\varphi}:Y\to Z$ such that $\tilde{\varphi}\circ\kappa=\varphi$ and $\norm{\tilde{\varphi}}_{cb}=\norm{\varphi}_{cb}$. Thus $Z$ is injective in $\mathsf{OSp}^{G}_{(\bbc,\bbr)}$ (resp.\ $\mathsf{OSp}^{\1,G}_{(\bbc,\bbr)}$, $\mathsf{OSy}^{G}_{(\bbc,\bbr)}$).
\end{proof}

\begin{thm}\label{gcpltogintthm}
	Let $X\in\mathsf{OSp}^{G}_{(\bbc,\bbc)}$ (resp.\ $\mathsf{OSp}^{\1,G}_{(\bbc,\bbc)}$, $\mathsf{OSy}^{G}_{(\bbc,\bbc)}$).  If $(Z,\kappa)$ is an injective envelope of $X$, then $(\mcf_0(Z),\mcf_0(\kappa))$ is an injective envelope of $\mcf_0(X)$.
\end{thm}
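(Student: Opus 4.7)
The first step is straightforward: $\mcf_0(Z)$ is injective in the intermediate $G$-category by Lemma~\ref{gcpltogintlem}. Since in these categories an injective extension is essential if and only if it is rigid, I plan to verify rigidity of $(\mcf_0(Z),\mcf_0(\kappa))$ rather than essentiality directly. So I fix $\phi:Z\to Z$, a morphism in the intermediate $G$-category with $\phi\circ\kappa=\kappa$, and aim to prove $\phi=\textup{Id}_Z$.

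The strategy is to symmetrize $\phi$ into a $\bbc$-linear morphism to which the already established complex rigidity of $(Z,\kappa)$ applies, and then separately eliminate the leftover conjugate-linear piece by an iteration/norm argument. Set $\tilde\phi(z):=-\ii\,\phi(\ii z)$. Since $z\mapsto\ii z$ is a $G$-equivariant $\bbr$-linear complete isometry of $Z$, the map $\tilde\phi$ inherits $G$-equivariance and complete contractivity from $\phi$. In the unital and operator-system cases I must also check that $\tilde\phi$ is unital (and, for systems, $*$-linear): unitality will follow from $\phi(\ii\cdot 1_Z)=\phi(\kappa(\ii\cdot 1_X))=\kappa(\ii)=\ii$, giving $\tilde\phi(1)=-\ii\cdot\ii=1$, and $*$-linearity is a short direct computation using the $*$-linearity of $\phi$ (which itself is forced by the $\bbr$-linear ucp hypothesis applied to $2\times 2$ positive matrices of the form $\bigl[\begin{smallmatrix}1 & \ii b\\ -\ii b & 1+b^2\end{smallmatrix}\bigr]$). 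A computation using the $\bbc$-linearity of $\kappa$ also yields $\tilde\phi\circ\kappa=\kappa$. The average $\psi:=\tfrac12(\phi+\tilde\phi)$ is then a morphism in the intermediate category that additionally satisfies $\psi(\ii z)=\ii\psi(z)$, so it is a morphism in the corresponding complex $G$-category with $\psi\circ\kappa=\kappa$. Rigidity of the complex injective envelope $(Z,\kappa)$ will force $\psi=\textup{Id}_Z$.

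Writing $\phi$ in its canonical decomposition $\phi=\phi_1+\phi_2$ into a $\bbc$-linear part $\phi_1$ and a conjugate-$\bbc$-linear part $\phi_2$, the previous paragraph gives $\phi_1=\psi=\textup{Id}_Z$, so $\phi=\textup{Id}_Z+\phi_2$. The main obstacle is that complex rigidity controls only $\phi_1$ and says nothing directly about $\phi_2$. To finish, I will apply the same symmetrization argument to $\phi^2$, which is again a morphism in the intermediate category with $\phi^2\circ\kappa=\kappa$, forcing $(\phi^2)_1=\textup{Id}_Z$. Expanding $\phi^2=\textup{Id}_Z+2\phi_2+\phi_2^2$ and noting that $\phi_2^2$ is $\bbc$-linear while $2\phi_2$ is conjugate-linear, one reads off $(\phi^2)_1=\textup{Id}_Z+\phi_2^2$, whence $\phi_2^2=0$. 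The binomial theorem then yields $\phi^n=\textup{Id}_Z+n\phi_2$ for every $n\geq 1$, and the complete contractivity of $\phi^n$ gives $\|z+n\phi_2(z)\|\leq\|z\|$ for every $z\in Z$ and every $n$. Dividing by $n$ and letting $n\to\infty$ forces $\phi_2(z)=0$, so $\phi=\textup{Id}_Z$ and rigidity is established, completing the proof.
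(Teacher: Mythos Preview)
Your argument is correct, but it takes a substantially different route from the paper's.

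The paper's proof is a two-line reduction: it invokes the result from \cite{bck23} that the complex $G$-injective envelope $(Z,\kappa)$ of $X$ is already an injective envelope of $X$ in $\mathsf{OSp}^{G}_{(\bbr,\bbr)}$ (resp.\ $\mathsf{OSp}^{\1,G}_{(\bbr,\bbr)}$, $\mathsf{OSy}^{G}_{(\bbr,\bbr)}$). Since the morphisms in the intermediate $G$-category are exactly $G$-equivariant $\bbr$-linear maps (with the appropriate extra structure), any $T:Z\to Z$ in the intermediate category fixing $\kappa$ is in particular such a map between the underlying real spaces, and real rigidity forces $T=\textup{Id}_Z$ immediately.

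Your approach avoids this black box entirely and instead argues from complex rigidity alone. You split $\phi$ into its $\bbc$-linear and conjugate-linear parts, use complex rigidity on the averaged map $\psi$ to pin down $\phi_1=\textup{Id}_Z$, and then run a nilpotency-plus-iteration argument ($\phi_2^2=0$, hence $\phi^n=\textup{Id}_Z+n\phi_2$, hence $\phi_2=0$ by contractivity). This is self-contained and does not rely on the real-envelope result from \cite{bck23}; in effect, it gives an independent proof of a special case of that result. The trade-off is length: the paper's argument is a one-step reduction, while yours requires the symmetrization, the nilpotency computation, and the norm limit. One small presentational point in the operator-system case: you check that $\tilde\phi$ is $*$-linear, but what you actually need is that $\psi$ is a morphism in $\mathsf{OSy}^G_{(\bbc,\bbc)}$, and this follows more directly from $\psi$ being unital, $\bbc$-linear and completely contractive (hence ucp), regardless of whether $\tilde\phi$ itself is ucp.
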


\begin{proof}
	Let $(Z,\kappa)$ be an injective envelope of $X$ in $\mathsf{OSp}^{G}_{(\bbc,\bbc)}$ (resp.\ $\mathsf{OSp}^{\1,G}_{(\bbc,\bbc)}$, $\mathsf{OSy}^{G}_{(\bbc,\bbc)}$). Then $Z$ is injective in $\mathsf{OSp}^{G}_{(\bbc,\bbr)}$ (resp.\ $\mathsf{OSp}^{\1,G}_{(\bbc,\bbr)}$, $\mathsf{OSy}^{G}_{(\bbc,\bbr)}$) by the result above. To show that $(Z,\kappa)$ is an injective envelope of $X$ in $\mathsf{OSp}^{G}_{(\bbc,\bbr)}$ (resp.\ $\mathsf{OSp}^{\1,G}_{(\bbc,\bbr)}$, $\mathsf{OSy}^{G}_{(\bbc,\bbr)}$), it is enough to show that $(Z,\kappa)$ is a rigid extension of $X$ in $\mathsf{OSp}^{G}_{(\bbc,\bbr)}$ (resp.\ $\mathsf{OSp}^{\1,G}_{(\bbc,\bbr)}$, $\mathsf{OSy}^{G}_{(\bbc,\bbr)}$).
    Consider $\kappa:X\to Z$ as a $\bbr$-linear map and let $T:Z\to Z$ be a $G$-equivariant completely contractive $\bbr$-linear map such that $T\circ\kappa=\kappa$. Now, consider $X$ and $Z$ as real operator spaces, where $(Z,\kappa)$ is also an injective envelope of $X$ in $\mathsf{OSp}^{G}_{(\bbr,\bbr)}$ (resp.\ $\mathsf{OSp}^{\1,G}_{(\bbr,\bbr)}$,$\mathsf{OSy}^{G}_{(\bbr,\bbr)}$). Then $(Z,\kappa)$ is equivalently a rigid extension of $X$ in $\mathsf{OSp}^{G}_{(\bbr,\bbr)}$ (resp.\ $\mathsf{OSp}^{\1,G}_{(\bbr,\bbr)}$,$\mathsf{OSy}^{G}_{(\bbr,\bbr)}$). Hence the $\bbr$-linear map $T$ must be the identity on $Z$. But this implies that $(Z,\kappa)$ is a rigid extension of $X$ in $\mathsf{OSp}^{G}_{(\bbc,\bbr)}$ (resp.\ $\mathsf{OSp}^{\1,G}_{(\bbc,\bbr)}$, $\mathsf{OSy}^{G}_{(\bbc,\bbr)}$). Thus, equivalently, $(Z,\kappa)$ is an injective envelope of $X$ in $\mathsf{OSp}^{G}_{(\bbc,\bbr)}$ (resp.\ $\mathsf{OSp}^{\1,G}_{(\bbc,\bbr)}$, $\mathsf{OSy}^{G}_{(\bbc,\bbr)}$).
\end{proof}

\begin{remark}
    For the functors going from the complex categories to the intermediate categories, variants of Lemma~\ref{gcpltogintlem} and Theorem~\ref{gcpltogintthm} also hold and follow directly from them, taking $G=\{e\}$. Alternatively, one can prove the results directly, considering the appropriate objects and morphisms. 
\end{remark}


\subsection{From $G$-operator system categories to unital $G$-operator space categories} Consider the functor $\mcf_0$ from the category $\mathsf{OSy}^{G}_{(\bbf_1,\bbf_2)}$ to the category $\mathsf{OSp}^{\1,G}_{(\bbf_1,\bbf_2)}$.

\begin{lemma}\label{ffinj}
	Let $X\in\mathsf{OSy}^{G}_{(\bbf_1,\bbf_2)}$ be injective. Then $\mcf_0(X)$ is injective.
\end{lemma}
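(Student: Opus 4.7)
The plan is to use the Arveson functor $\mcf_A$ as a bridge between the unital-operator-space category and the operator-system category, exploiting the fact that injectivity of $X$ is already assumed in the latter.

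Given a $G$-equivariant unital $\bbf_2$-linear complete isometry $\kappa : Y \to W$ of unital $\bbf_1$-$G$-operator spaces and a $G$-equivariant unital completely contractive $\bbf_2$-linear map $\vphi : Y \to \mcf_0(X)$, I would first apply $\mcf_A$ to obtain a $G$-equivariant unital complete order embedding $\mcf_A(\kappa) : \mcf_A(Y) \to \mcf_A(W)$ and a $G$-equivariant unital completely positive $\bbf_2$-linear map $\mcf_A(\vphi) : \mcf_A(Y) \to X$. Note that the codomain of $\mcf_A(\vphi)$ can be taken to be $X$ itself rather than some strictly larger operator system because $X$ is already self-adjoint, so $\textup{Ran}(\vphi) + \textup{Ran}(\vphi)^{\star} \subseteq X$; and the required $G$-equivariance is exactly the content of the canonical action $g(y + z^{\star}) = gy + (gz)^{\star}$ on Arveson envelopes recalled in Section~\ref{s2}.

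Now, the injectivity of $X$ in $\mathsf{OSy}^G_{(\bbf_1,\bbf_2)}$ applied to the pair $(\mcf_A(\kappa), \mcf_A(\vphi))$ supplies a $G$-equivariant unital completely positive $\bbf_2$-linear extension $\Phi : \mcf_A(W) \to X$ with $\Phi \circ \mcf_A(\kappa) = \mcf_A(\vphi)$. I would then set $\tilde{\vphi} := \Phi|_W$ via the canonical inclusion $W \hookrightarrow \mcf_A(W)$. Since the restriction of a unital completely positive map between operator systems to a unital $\bbf_2$-linear subspace is automatically unital and completely contractive, $\tilde{\vphi}$ is a $G$-equivariant unital completely contractive $\bbf_2$-linear map from $W$ into $\mcf_0(X)$; and the identity $\mcf_A(\vphi)|_Y = \vphi$ gives $\tilde{\vphi} \circ \kappa = \vphi$, establishing injectivity of $\mcf_0(X)$ in $\mathsf{OSp}^{\1,G}_{(\bbf_1,\bbf_2)}$.

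I do not expect a substantive obstacle here: the argument is essentially formal once the functoriality of $\mcf_A$ is in hand. The single technical point is that $\mcf_A$ converts $G$-equivariant unital complete isometries into $G$-equivariant unital complete order embeddings of the Arveson envelopes, which is immediate from the explicit construction $Y \mapsto Y + Y^{\star}$ together with \cite[Lemma~1.3.6]{blm04} and the formula $g(y+z^{\star}) = gy + (gz)^{\star}$. The same argument runs uniformly in all three choices of target category and in both real and complex scalar fields $\bbf_1, \bbf_2$.
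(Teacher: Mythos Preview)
Your argument is correct and in some ways cleaner than the paper's own proof, but it is a genuinely different route. The paper does not verify the extension property directly; instead it invokes the characterization of injectivity in the $G$-category via Proposition~\ref{crginjiff}: $X$ is injective in $\mathsf{OSy}^{G}_{(\bbf_1,\bbf_2)}$ iff $X$ is injective in $\mathsf{OSy}_{(\bbf_1,\bbf_2)}$ and there is a $G$-equivariant UCP idempotent $\ell^{\infty}(G,X)\to X$. It then observes that this same idempotent is unital completely contractive, and that non-equivariant injectivity in $\mathsf{OSy}_{(\bbf_1,\bbf_2)}$ gives a UCP (hence u.c.c.) projection $B(\mch)\to X$, which witnesses injectivity in $\mathsf{OSp}^{\1}_{(\bbf_1,\bbf_2)}$; applying Proposition~\ref{crginjiff} in reverse then yields injectivity in $\mathsf{OSp}^{\1,G}_{(\bbf_1,\bbf_2)}$.

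Your approach bypasses the $\ell^{\infty}(G,X)$ machinery entirely by using $\mcf_A$ as a bridge and checking the extension property by hand. What this buys you is a self-contained argument that does not rely on the projection characterization; what the paper's approach buys is consistency with the template used throughout (reduce $G$-injectivity to ordinary injectivity plus a $G$-projection), and it makes the equivalence ``unital completely positive $\Leftrightarrow$ unital completely contractive'' the only substantive point, rather than also needing that $\mcf_A$ takes unital complete isometries to unital complete order embeddings. Both proofs work uniformly across the three scalar pairs.
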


\begin{proof}
	Assume $X$ is injective in $\mathsf{OSy}^{G}_{(\bbf_1,\bbf_2)}$. Equivalently, $X$ is injective in $\mathsf{OSy}_{(\bbf_1,\bbf_2)}$ and there exists a $G$-equivariant unital completely positive idempotent $\bbf_2$-linear map $T$ from $\ell^{\infty}(G,X)$ onto $X$. But $T$ is equivalently a surjective $G$-equivariant unital completely contractive idempotent $\bbf_2$-linear map. So, it suffices to show that $X$ is also injective in $\mathsf{OSp}^{\1,G}_{(\bbf_1,\bbf_2)}$ when considering $X$ as a unital operator space. But $X\subset B(\mch)$ for some $\bbf_2$-Hilbert space $\mch$ and since $X$ is injective in $\mathsf{OSy}_{(\bbf_1,\bbf_2)}$, there exists a unital completely positive idempotent $\bbf_2$-linear map from $B(\mch)$ onto $X$. Equivalently, there exists a unital completely contractive idempotent $\bbf_2$-linear map from $B(\mch)$ onto $X$, and therefore $X$ is injective in $\mathsf{OSp}^{\1}_{(\bbf_1,\bbf_2)}$. Thus, $X$ is injective in $\mathsf{OSp}^{\1,G}_{(\bbf_1,\bbf_2)}$.
\end{proof}

\begin{thm}\label{gosytougosp}
	Let $X\in\mathsf{OSy}^{G}_{(\bbf_1,\bbf_2)}$. If $(Z,\kappa)$ is an injective envelope of $X$, then $(\mcf_0(Z),\mcf_0(\kappa))$ is an injective envelope of $\mcf_0(X)$.
\end{thm}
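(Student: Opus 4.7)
The plan is to proceed exactly as in Theorem~\ref{gcpltogintthm}, first upgrading $\mcf_0(Z)$ to an injective object of $\mathsf{OSp}^{\1,G}_{(\bbf_1,\bbf_2)}$ via Lemma~\ref{ffinj}, and then reducing rigidity in the unital $G$-operator space category to rigidity in the $G$-operator system category.

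First I would verify that $(\mcf_0(Z),\mcf_0(\kappa))$ is indeed an extension of $\mcf_0(X)$ in $\mathsf{OSp}^{\1,G}_{(\bbf_1,\bbf_2)}$: this only requires noting that a $G$-equivariant unital complete order embedding is automatically a $G$-equivariant unital complete isometry, so $\mcf_0(\kappa)$ is a morphism of the right type. Injectivity of $\mcf_0(Z)$ in $\mathsf{OSp}^{\1,G}_{(\bbf_1,\bbf_2)}$ is immediate from Lemma~\ref{ffinj}.

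For rigidity, suppose $T:\mcf_0(Z)\to\mcf_0(Z)$ is a $G$-equivariant unital completely contractive $\bbf_2$-linear map with $T\circ\mcf_0(\kappa)=\mcf_0(\kappa)$. Here is the key observation: since $Z$ is an $\bbf_1$-$G$-operator system and $T$ is unital and completely contractive, the standard fact recalled in the introduction (a unital map between operator systems is completely contractive iff it is completely positive) implies $T$ is unital completely positive. Hence $T$ is a morphism in $\mathsf{OSy}^{G}_{(\bbf_1,\bbf_2)}$ satisfying $T\circ\kappa=\kappa$. By the injective envelope property of $(Z,\kappa)$ in $\mathsf{OSy}^{G}_{(\bbf_1,\bbf_2)}$ and the equivalence of rigidity and essentiality for injective extensions recalled before, $(Z,\kappa)$ is rigid in $\mathsf{OSy}^{G}_{(\bbf_1,\bbf_2)}$, forcing $T=\textup{Id}_Z=\textup{Id}_{\mcf_0(Z)}$. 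Thus $(\mcf_0(Z),\mcf_0(\kappa))$ is a rigid, hence essential, injective extension of $\mcf_0(X)$, and so an injective envelope in $\mathsf{OSp}^{\1,G}_{(\bbf_1,\bbf_2)}$.

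The main obstacle here is conceptual rather than technical: one has to recognize that while the category $\mathsf{OSp}^{\1,G}_{(\bbf_1,\bbf_2)}$ has strictly more morphisms than $\mathsf{OSy}^{G}_{(\bbf_1,\bbf_2)}$ between two given operator systems (we drop the positivity requirement), when the source is an operator system and the map is unital and completely contractive, the extra morphisms collapse onto the old ones. This is exactly what makes the rigidity argument transfer cleanly from the system category to the unital operator space category, and it is the same principle implicit in the proof of Lemma~\ref{ffinj}.
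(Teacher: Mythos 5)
Your proposal is correct and follows essentially the same route as the paper's proof: injectivity of $\mcf_0(Z)$ comes from Lemma~\ref{ffinj}, and rigidity is transferred by observing that a unital completely contractive map between operator systems is automatically unital completely positive, so the rigidity of $(Z,\kappa)$ in $\mathsf{OSy}^{G}_{(\bbf_1,\bbf_2)}$ forces $T=\textup{Id}_Z$. The only difference is that you make explicit the (routine) check that $\mcf_0(\kappa)$ is a morphism of the right type, which the paper leaves implicit.
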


\begin{proof}
 Let $(Z,\kappa)$ be an injective envelope of $X$ in  $\mathsf{OSy}^{G}_{(\bbf_1,\bbf_2)}$. By Lemma~\ref{ffinj}, $Z$ is injective in $\mathsf{OSp}^{\1,G}_{(\bbf_1,\bbf_2)}$. To show that $(Z,\kappa)$ is an injective envelope of $X$ in $\mathsf{OSp}^{\1,G}_{(\bbf_1,\bbf_2)}$, it suffices to show that it is a rigid extension in $\mathsf{OSp}^{\1,G}_{(\bbf_1,\bbf_2)}$. Let $T:Z\to Z$ be a $G$-equivariant unital completely contractive $\bbf_2$-linear map such that $T\circ\kappa=\kappa$. It is equivalently a $G$-equivariant unital completely positive $\bbf_2$-linear map such that $T\circ\kappa=\kappa$. Therefore, by the rigidity of $(Z,\kappa)$ in $\mathsf{OSy}^{G}_{(\bbf_1,\bbf_2)}$,  $T=\textup{Id}_{Z}$. Thus, $(Z,\kappa)$ is a rigid extension of $X$ in $\mathsf{OSp}^{\1,G}_{(\bbf_1,\bbf_2)}$, and hence an injective envelope of $X$ in $\mathsf{OSp}^{\1,G}_{(\bbf_1,\bbf_2)}.$
\end{proof}

\begin{remark}
    For the functors going from the operator system categories to the unital operator space categories, variants of Lemma~\ref{ffinj} and Theorem~\ref{gosytougosp} also hold and follow directly from them, taking $G=\{e\}$. Alternatively, one can prove the results directly, considering the appropriate objects and morphisms. 
\end{remark}


\subsection{From unital $G$-operator space categories to $G$-operator space categories} Consider the functor $\mcf_0$ from the category $\mathsf{OSp}^{\1,G}_{(\bbf_1,\bbf_2)}$ to the category $\mathsf{OSp}^{G}_{(\bbf_1,\bbf_2)}$.

\begin{lemma}\label{ffinj2}
	Let $X\in\mathsf{OSp}^{\1,G}_{(\bbf_1,\bbf_2)}$ be injective. Then $\mcf_0(X)$ is injective.
\end{lemma}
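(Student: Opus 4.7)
The plan is to imitate the proof of Lemma~\ref{ffinj}, reducing the question to the characterization of injectivity via idempotent projections provided by Proposition~\ref{crginjiff}. Passing from $\mathsf{OSp}^{\1,G}_{(\bbf_1,\bbf_2)}$ to $\mathsf{OSp}^{G}_{(\bbf_1,\bbf_2)}$ only drops the unitality constraint on objects and morphisms, so any structure witnessing injectivity in the unital category should immediately witness injectivity in the non-unital one.

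First, I would apply Proposition~\ref{crginjiff} in the $\mathsf{OSp}^{\1,G}_{(\bbf_1,\bbf_2)}$ setting to the hypothesis that $X$ is injective. This yields both (a) $X$ is injective in $\mathsf{OSp}^{\1}_{(\bbf_1,\bbf_2)}$, and (b) there exists a surjective $G$-equivariant unital completely contractive idempotent $\bbf_2$-linear map $\phi:\ell^{\infty}(G,X)\to j(X)$ with $\phi\circ j=\textup{Id}_X$. I will then verify the two conditions required by Proposition~\ref{crginjiff} for injectivity in $\mathsf{OSp}^{G}_{(\bbf_1,\bbf_2)}$: that $X$ is injective in $\mathsf{OSp}_{(\bbf_1,\bbf_2)}$, and that there exists a surjective $G$-equivariant completely contractive idempotent $\bbf_2$-linear map from $\ell^{\infty}(G,X)$ onto $j(X)$.

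The second condition is immediate from (b), since any unital completely contractive map is in particular completely contractive. For the first condition, I would realize $X$ as a subspace of $B(\mch)$ for some $\bbf_2$-Hilbert space $\mch$. Condition (a), combined with the characterization of injectivity in $\mathsf{OSp}^{\1}_{(\bbf_1,\bbf_2)}$ as being the range of a unital completely contractive idempotent $\bbf_2$-linear map from $B(\mch)$ (the analog of Corollary~\ref{intermedinjiff}), provides such a map $P:B(\mch)\to X$. This $P$ is in particular a completely contractive idempotent. Since $B(\mch)$ is injective in $\mathsf{OSp}_{(\bbf_1,\bbf_2)}$ (the analog of Theorem~\ref{bhinj}, classical over $\bbc$ and given in the real case by \cite[Lemma~4.1]{bck23}), and the range of a completely contractive $\bbf_2$-linear idempotent on an injective object is itself injective, we conclude that $X$ is injective in $\mathsf{OSp}_{(\bbf_1,\bbf_2)}$. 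An appeal to Proposition~\ref{crginjiff} then finishes the proof.

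The only potential obstacle is bookkeeping: ensuring the ``range of a completely contractive idempotent'' characterization is available in each of the three settings $(\bbf_1,\bbf_2)\in\{(\bbc,\bbc),(\bbr,\bbr),(\bbc,\bbr)\}$, and at the unital and non-unital levels. These characterizations are classical in the complex case and were extended in \cite{bck23} and earlier in the present section, so they can simply be cited. No genuine analytic step is required beyond that.
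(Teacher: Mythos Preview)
Your proposal is correct and follows essentially the same route as the paper: unpack injectivity in $\mathsf{OSp}^{\1,G}_{(\bbf_1,\bbf_2)}$ via Proposition~\ref{crginjiff}, drop the unitality on both the $\ell^\infty(G,X)$ projection and the $B(\mch)$ projection, and repackage via Proposition~\ref{crginjiff} again. One small slip: the Hilbert space $\mch$ should be an $\bbf_1$-Hilbert space (since $X$ is an $\bbf_1$-operator space), not an $\bbf_2$-Hilbert space; this matters in the intermediate case $(\bbc,\bbr)$ but does not affect the argument.
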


\begin{proof}
	Assume $X$ is injective in $\mathsf{OSp}^{\1,G}_{(\bbf_1,\bbf_2)}$. Equivalently, $X$ is injective in $\mathsf{OSp}^{\1}_{(\bbf_1,\bbf_2)}$ and there exists a surjective $G$-equivariant unital completely contractive idempotent $\bbf_2$-linear map $T$ from $\ell^{\infty}(G,X)$ onto $X$. It follows that $T$ is equivalently a surjective $G$-equivariant completely contractive idempotent $\bbf_2$-linear map. Since $X$ is injective in $\mathsf{OSp}^{\1}_{(\bbf_1,\bbf_2)}$ and $X\subset B(\mch)$ for some $\bbf_1$-Hilbert space, there exists a unital completely contractive idempotent $\bbf_2$-linear map from $B(\mch)$ onto $X$. Equivalently there exists a completely contractive idempotent $\bbf_2$-linear map from $B(\mch)$ onto $X$, hence $X$ is injective in $\mathsf{OSp}_{(\bbf_1,\bbf_2)}$. 
    Thus $X$ is equivalently injective in $\mathsf{OSp}^{G}_{(\bbf_1,\bbf_2)}$.
\end{proof}

\begin{thm}\label{ugopstogopsp}
	Let $X\in\mathsf{OSp}^{\1,G}_{(\bbf_1,\bbf_2)}$ and let $(Z,\kappa)$ be an injective envelope of $X$. Then $(\mcf_0(Z),\mcf_0(\kappa))$ is an injective envelope of $\mcf_0(X)$.
\end{thm}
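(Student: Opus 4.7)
The plan is to follow the exact same two-step template used in Theorems~\ref{gcpltogintthm} and \ref{gosytougosp}: first appeal to Lemma~\ref{ffinj2} to get injectivity in the target category, then verify rigidity of the extension $(\mcf_0(Z),\mcf_0(\kappa))$ in $\mathsf{OSp}^{G}_{(\bbf_1,\bbf_2)}$, and finally invoke the equivalence of rigidity and essentiality for injective extensions (stated after the definition of essential extension in the introduction).

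For the rigidity step, let $T:Z\to Z$ be a $G$-equivariant completely contractive $\bbf_2$-linear map satisfying $T\circ\mcf_0(\kappa)=\mcf_0(\kappa)$. The key observation is that, because $(Z,\kappa)$ is an extension in $\mathsf{OSp}^{\1,G}_{(\bbf_1,\bbf_2)}$, the embedding $\kappa$ is unital, so $\kappa(1_X)=1_Z$. Applying $T\circ\kappa=\kappa$ at $1_X$ gives $T(1_Z)=1_Z$, i.e., $T$ is automatically unital. Hence $T$ is in fact a $G$-equivariant unital completely contractive $\bbf_2$-linear self-map of $Z$, so it is a morphism in the original category $\mathsf{OSp}^{\1,G}_{(\bbf_1,\bbf_2)}$. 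Since $(Z,\kappa)$ is an injective envelope (hence a rigid extension) there, we conclude $T=\textup{Id}_Z$. This shows rigidity of $(\mcf_0(Z),\mcf_0(\kappa))$ in the target category.

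Combining injectivity (Lemma~\ref{ffinj2}) with rigidity, and using the fact that an injective extension is rigid if and only if it is essential, yields that $(\mcf_0(Z),\mcf_0(\kappa))$ is an injective envelope of $\mcf_0(X)$ in $\mathsf{OSp}^{G}_{(\bbf_1,\bbf_2)}$, as desired.

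The step one might expect to be the main obstacle, namely the absence of the unital requirement on morphisms in the target category, turns out not to be an obstacle at all: unitality of $T$ is forced for free by the identity $T\circ\kappa=\kappa$ together with unitality of $\kappa$. Thus the argument is essentially a one-line reduction to rigidity in the source category, and no further structural input is needed.
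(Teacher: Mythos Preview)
Your proposal is correct and follows essentially the same approach as the paper: invoke Lemma~\ref{ffinj2} for injectivity, then verify rigidity by noting that $T\circ\kappa=\kappa$ forces $T(1_Z)=\kappa(1_X)=1_Z$, so $T$ is unital and rigidity in $\mathsf{OSp}^{\1,G}_{(\bbf_1,\bbf_2)}$ applies. The paper's proof is the same argument, stated slightly more tersely.
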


\begin{proof}
Let $(Z,\kappa)$ be an injective envelope of $X$ in  $\mathsf{OSp}^{\1,G}_{(\bbf_1,\bbf_2)}$. By Lemma~\ref{ffinj2}, $Z$ is injective in $\mathsf{OSp}^{G}_{(\bbf_1,\bbf_2)}$. To show that $(Z,\kappa)$ is an injective envelope of $X$ in $\mathsf{OSp}^{G}_{(\bbf_1,\bbf_2)}$, it suffices to show that it is a rigid extension of $X$ in $\mathsf{OSp}^{G}_{(\bbf_1,\bbf_2)}$. Let $T:Z\to Z$ be a $G$-equivariant completely contractive $\bbf_2$-linear map such that $T\circ\kappa=\kappa$. Since $X$ and $Z$ are unital and $\kappa$ is unital, $T$ must be unital. Therefore, by the rigidity of $(Z,\kappa)$ in $\mathsf{OSp}^{\1,G}_{(\bbf_1,\bbf_2)}$, $T=\textup{Id}_{Z}$. Thus, $(Z,\kappa)$ is a rigid extension of $X$ in $\mathsf{OSp}^{G}_{(\bbf_1,\bbf_2)}$, and hence an injective envelope of $X$ in $\mathsf{OSp}^{G}_{(\bbf_1,\bbf_2)}.$
\end{proof}

\begin{remark}
    For the functors going from the unital operator space categories to the operator space categories, variants of Lemma~\ref{ffinj2} and Theorem~\ref{ugopstogopsp} also hold and follow directly from them, taking $G=\{e\}$. Alternatively, one can prove the results directly, considering the appropriate objects and morphisms. 
\end{remark}


\subsection{From $G$-operator system categories to operator system categories} Assume $G$ is a finite group and consider the functor $\mcf_0$ from category $\mathsf{OSy}^{G}_{(\bbf_1,\bbf_2)}$ to the category $\mathsf{OSy}_{(\bbf_1,\bbf_2)}$. Only the intermediate case is treated below. For the real and complex $G$-categories, see \cite[Theorem~4.16]{bck23}.

\begin{thm}
    Let $X\in\mathsf{OSy}^{G}_{(\bbc,\bbr)}$ and let $(Z,\kappa)$ be an injective envelope of $X$. Then $(\mcf_0(Z),\mcf_0(\kappa))$ is an injective envelope of $\mcf_0(X)$. 
\end{thm}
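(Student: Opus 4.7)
The strategy is to route through the real $G$-operator system category, where the analogous result \cite[Theorem~4.16]{bck23} is available, and then restore the complex structure using the tools of Subsection~2.2. Concretely, I would factor $\mcf_0\colon\mathsf{OSy}^{G}_{(\bbc,\bbr)}\to\mathsf{OSy}_{(\bbc,\bbr)}$ as
\[ \mathsf{OSy}^{G}_{(\bbc,\bbr)} \longrightarrow \mathsf{OSy}^{G}_{(\bbr,\bbr)} \longrightarrow \mathsf{OSy}_{(\bbr,\bbr)} \longrightarrow \mathsf{OSy}_{(\bbc,\bbr)}, \]
where the first arrow forgets the complex structure on the objects, the second forgets the $G$-action, and the third re-instates the complex operator system structure that $Z$ already carries. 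The goal is to verify that each of these three passages preserves injective envelopes.

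For the first passage, a $G$-equivariant $\bbr$-linear unital completely positive self-map of $Z$ is literally the same function whether $Z$ is viewed as a complex or as a real operator system; thus rigidity of $(Z,\kappa)$ transfers verbatim. For injectivity, Proposition~\ref{crginjiff} expresses $G$-injectivity as non-equivariant injectivity plus a $G$-equivariant $\bbr$-linear upc projection $\ell^{\infty}(G,Z)\to j(Z)$, and this projection is the same map in both categories; non-equivariant injectivity transfers via Corollary~\ref{intermedinjiff}, using that $B(\mch)$ for a complex Hilbert space $\mch$ remains injective in $\mathsf{OSy}_{(\bbr,\bbr)}$ (the real analogue of Theorem~\ref{bhinj}, which appears in \cite[Lemma~4.1]{bck23}), so the $\bbr$-linear upc idempotent $B(\mch)\to Z$ provided by Corollary~\ref{intermedinjiff} already witnesses real-injectivity of $Z$. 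The second passage is then precisely \cite[Theorem~4.16]{bck23}, which applies because $G$ is finite.

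For the third passage, rigidity transfers again by the same self-map invariance remark used above. Injectivity of $\mcf_0(Z)$ in $\mathsf{OSy}_{(\bbc,\bbr)}$ follows from Corollary~\ref{intermedinjiff}: fix any concrete realization $Z\subseteq B(\mch)$ with $\mch$ a complex Hilbert space; then the real-injectivity of $Z$ established in the previous paragraph supplies a $\bbr$-linear upc retraction $B(\mch)\to Z$, which is exactly the idempotent the corollary asks for. The main obstacle is the first passage: the statement that forgetting the complex structure in the $G$-setting preserves injective envelopes does not appear explicitly in the paper, but it reduces to routine bookkeeping with Proposition~\ref{crginjiff} and Corollary~\ref{intermedinjiff}. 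Alternatively, one could avoid the detour entirely by adapting the proof of \cite[Theorem~4.16]{bck23} verbatim in the intermediate setting—performing the finite-$G$ average on a Hamana-type minimal $X$-projection inside some $B(\mch)$—which involves essentially the same amount of work.
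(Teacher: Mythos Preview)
Your approach is correct but takes a different route from the paper. You factor through the real categories: first apply the $G$-equivariant version of $\mcf_{00}$ (this is exactly Corollary~\ref{f00ginjenv} in the paper, so it \emph{is} stated explicitly, contrary to your remark), then invoke \cite[Theorem~4.16]{bck23} for the real case, and finally pass back from $\mathsf{OSy}_{(\bbr,\bbr)}$ to $\mathsf{OSy}_{(\bbc,\bbr)}$ using that the morphisms testing rigidity are the same and that Corollary~\ref{intermedinjiff} supplies injectivity. This is a clean modular argument that outsources the substantive step (averaging plus an extreme-point argument) to the already-proved real case.

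The paper instead proves the result directly in the intermediate category: it gets injectivity of $Z$ in $\mathsf{OSy}_{(\bbc,\bbr)}$ from Proposition~\ref{crginjiff}, and for rigidity it averages a given $\bbr$-linear ucp self-map $T$ over the finite group to obtain a $G$-equivariant $\tilde{T}$ with $\tilde{T}\circ\kappa=\kappa$, whence $\tilde{T}=\textup{Id}_Z$ by $G$-rigidity; then, since $\textup{Id}_Z$ is an extreme point of the unit ball of the unital Banach algebra $CB(Z)$, the convex decomposition $\textup{Id}_Z=\tfrac{1}{|G|}\sum_g g^{-1}Tg$ forces $T=\textup{Id}_Z$. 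This is precisely the ``alternative'' you mention at the end---adapting \cite[Theorem~4.16]{bck23} verbatim to the intermediate setting---so you correctly anticipated the paper's method even if you chose not to lead with it. Your factorization buys reuse of existing results at the cost of some category-hopping; the paper's direct argument is shorter and self-contained but reproduces machinery already available in \cite{bck23}.
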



\begin{proof}
    Let $(Z,\kappa)$ be an injective envelope of $X$ in $\mathsf{OSy}^{G}_{(\bbc,\bbr)}$. Then $Z$ is equivalently injective in $\mathsf{OSy}_{(\bbc,\bbr)}$ by Proposition~\ref{crginjiff}, so it suffices to show that $(Z,\kappa)$ is a rigid extension of $X$ in $\mathsf{OSy}_{(\bbc,\bbr)}$. Consider $X$ and $Z$ as operator spaces and consider $\kappa:X\to Z$ as a unital complete isometry. Assume $T:Z\to Z$ such that $T\circ\kappa=\kappa$. Apply the ``averaging" process to the maps; $\kappa$ stays the same since it was $G$-equivariant by assumption and $T$ becomes $\tilde{T}:Z\to Z$ such that $\tilde{T}\circ\kappa=\kappa$. Because $(Z,\kappa)$ is an injective envelope of $X$ in $\mathsf{OSy}^{G}_{(\bbc,\bbr)}$, it is a rigid extension of $X$ in $\mathsf{OSy}^{G}_{(\bbc,\bbr)}$, and so it follows that $\tilde{T}=\textup{Id}_{Z}$.
    By \cite[Corollary~2.1.42]{extreme}, the identity is an extreme point of the unit ball of any unital (real or complex) Banach algebra, so it follows that $T=\textup{Id}_Z$. Hence $(Z,\kappa)$ is a rigid extension of $X$ in $\mathsf{OSy}_{(\bbc,\bbr)}$ and thus is an injective envelope of $X$ in $\mathsf{OSy}_{(\bbc,\bbr)}$.
\end{proof}


\section{Other functors between operator categories}\label{s4}

\subsection{From intermediate categories to complex categories} Consider the complexification functor $\mcf_c$ from the category $\mathsf{OSp}_{(\bbc,\bbr)}$ (resp.\ $\mathsf{OSp}^{\1}_{(\bbc,\bbr)}$, $\mathsf{OSy}_{(\bbc,\bbr)}$) to the category $\mathsf{OSp}_{(\bbc,\bbc)}$ (resp.\ $\mathsf{OSp}^{\1}_{(\bbc,\bbc)}$, $\mathsf{OSy}_{(\bbc,\bbc)}$).

\begin{remark}\label{f00}
    The following lemma is not part of the ``functorial picture", as there is no functor in the diagram that is directly from the intermediate categories to the real categories. One could go from the intermediate categories to the real categories by composing functors, but there is something more natural; one can ``jump'' between categories. This jump is in fact a forgetful functor, say $\mcf_{00}$, from $\mathsf{OSp}_{(\bbc,\bbr)}$ to $\mathsf{OSp}_{(\bbr,\bbr)}$ (resp.\ from $\mathsf{OSp}^{\1}_{(\bbc,\bbr)}$ to $\mathsf{OSp}^{\1}_{(\bbr,\bbr)}$, from $\mathsf{OSy}_{(\bbc,\bbr)}$ to $\mathsf{OSy}_{(\bbr,\bbr)}$), which maps morphisms to themselves and maps objects to themselves, but considered as real spaces instead of complex spaces.
\end{remark}

\begin{lemma}\label{inttoreal}
    Let $X$ in $\mathsf{OSp}_{(\bbc,\bbr)}$ (resp.\ $\mathsf{OSp}^{\1}_{(\bbc,\bbr)}$, $\mathsf{OSy}_{(\bbc,\bbr)}$) be injective and consider the functor $\mcf_{00}$ from Remark~\ref{f00}. Then $\mcf_{00}(X)$ is injective.
\end{lemma}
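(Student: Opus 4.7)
The plan is to verify directly the real-operator-space analogue of Corollary~\ref{intermedinjiff}: namely, I will show that $X$, viewed as a real object, is a completely contractive (resp.\ unital completely contractive, unital completely positive) $\bbr$-linear idempotent image of $B(\mch_{\bbr})$ for the underlying real Hilbert space of some ambient complex Hilbert space. By \cite[Lemma~4.1]{bck23} this is equivalent to real injectivity.

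First I would take a faithful embedding $X \subseteq B(\mch)$ for some complex Hilbert space $\mch$, and apply Corollary~\ref{intermedinjiff} to produce an $\bbr$-linear completely contractive (resp.\ unital completely contractive, unital completely positive) idempotent $Q: B(\mch) \to X$. Then I would construct a canonical ``$\bbc$-linear-part'' retraction of $B(\mch_{\bbr})$ onto $B(\mch)$. Let $\mch_{\bbr}$ denote $\mch$ with its underlying real Hilbert space structure and let $J \in B(\mch_{\bbr})$ be the operator $v \mapsto \ii v$; then $J^2 = -\textup{Id}$ and $J^* = -J$. The formula
\[
P(T) \;=\; \tfrac{1}{2}(T - JTJ), \qquad T \in B(\mch_{\bbr}),
\]
defines an $\bbr$-linear idempotent with range equal to $B(\mch) = \{T : TJ = JT\}$, and $P$ is obviously unital since $P(\textup{Id}) = \tfrac{1}{2}(\textup{Id} - J^2) = \textup{Id}$.

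Next I would verify that $P$ is completely contractive and completely positive. Complete contractivity follows because on $M_n(B(\mch_{\bbr})) = B(\mch_{\bbr}^n)$ the same formula applies with $J$ replaced by its diagonal ampliation, and both $T \mapsto T$ and $T \mapsto JTJ$ are complete isometries, so $\|P_n\| \le 1$. For self-adjointness, a direct computation using $J^* = -J$ gives $P(T)^* = \tfrac{1}{2}(T^* - J^*T^*J^*) = \tfrac{1}{2}(T^* - JT^*J) = P(T^*)$. For positivity, if $T \ge 0$ then $\langle JTJv,v\rangle = -\langle TJv,Jv\rangle \le 0$, so $-JTJ \ge 0$ and hence $P(T) \ge 0$; the same argument on $M_n$ gives complete positivity. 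The matrix norms on $B(\mch)$ inherited as a sub–real operator space of $B(\mch_{\bbr})$ agree with the ones obtained by forgetting the complex structure on $B(\mch)$, since in each case $M_n(B(\mch))$ is normed by its action on the same underlying real Hilbert space $\mch^n_{\bbr}$, so this does indeed land in the correct category.

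Finally I would compose: $Q \circ P : B(\mch_{\bbr}) \to X$ is an $\bbr$-linear completely contractive (resp.\ unital completely contractive, unital completely positive) idempotent onto $\mcf_{00}(X)$, so the real analogue of Corollary~\ref{intermedinjiff} \cite[Lemma~4.1]{bck23} yields injectivity of $\mcf_{00}(X)$ in $\mathsf{OSp}_{(\bbr,\bbr)}$ (resp.\ $\mathsf{OSp}^{\1}_{(\bbr,\bbr)}$, $\mathsf{OSy}_{(\bbr,\bbr)}$). The only step that requires genuine verification is the construction and analysis of $P$; everything else is bookkeeping through the characterization of injectivity via complete contractive projections.
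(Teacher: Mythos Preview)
Your proof is correct, but it takes a different route from the paper's. The paper simply quotes \cite[Lemma~4.1]{bck23} to conclude that $B(\mch)$, for $\mch$ complex, is already injective in $\mathsf{OSp}_{(\bbr,\bbr)}$ (resp.\ the unital and system versions); it then verifies the extension property for $X$ directly by extending any real morphism $\vphi:Y\to X$ first into $B(\mch)$ and then projecting back with the idempotent $T$ coming from Corollary~\ref{intermedinjiff}. No passage through $B(\mch_{\bbr})$ or an explicit averaging projection is needed.

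Your argument instead reproves a special case of \cite[Lemma~4.1]{bck23} by hand: the map $P(T)=\tfrac12(T-JTJ)=\tfrac12(T+J^*TJ)$ exhibits $B(\mch)$ explicitly as the range of a unital completely positive $\bbr$-linear idempotent on $B(\mch_{\bbr})$, after which you compose with $Q$ and invoke the real analogue of the idempotent characterization. This is more constructive and self-contained (it gives a concrete formula for the retraction, and makes transparent why the matrix norms match), at the cost of doing work the paper outsources to the cited lemma. Both approaches hinge on the same idempotent $Q:B(\mch)\to X$; the only genuine difference is whether real injectivity of $B(\mch)$ is taken as a black box or built explicitly.
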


\begin{proof}
    Note that $X\subseteq B(\mch)$ for some complex Hilbert space $\mch$ and $B(\mch)$ is a complex operator space which is complex injective. By \cite[Lemma~4.1]{bck23}, it is equivalently real injective. Since $X$ is intermediate injective, by Corollary~\ref{intermedinjiff}, there exists a completely contractive (resp.\ unital completely contractive, unital completely positive) idempotent $\bbr$-linear map $T$ from $B(\mch)$ onto $X$. Next, let $Y$ and $Z$ be real operator spaces (resp.\ unital operator spaces, operator systems) with a complete isometry (resp.\ unital complete isometry) $\kappa:Y\to Z$, and let $\vphi:Y\to X$ be a completely contractive (resp.\ unital completely contractive, unital completely positive) map. Then $\iota\circ\vphi:Y\to B(\mch)$ is a completely contractive (resp.\ unital completely contractive, unital completely positive) $\bbr$-linear map, where $\iota$ is the inclusion map. Since $B(\mch)$ is a real injective operator space (resp.\ unital operator space, operator system), there exists $\tilde{\vphi}:Z\to B(\mch)$ such that $\tilde{\vphi}\circ\kappa=\iota\circ\vphi$ and $\tilde{\vphi}$ is completely contractive (resp.\ unital completely contractive, unital completely positive). Since the image of $\vphi$ is contained in $X$, $T$ is the identity on the image of $\vphi$. Hence, $T\circ\tilde{\vphi}\circ\kappa=T\circ\iota\circ\vphi=\vphi$. Hence, $X$ is injective in $\mathsf{OSp}_{(\bbr,\bbr)}$ (resp.\ $\mathsf{OSp}^{\1}_{(\bbr,\bbr)}$, $\mathsf{OSy}_{(\bbr,\bbr)}$).
\end{proof}

\begin{cor}\label{f00injenv}
    Let $X\in\mathsf{OSp}_{(\bbc,\bbr)}$ (resp.\ in $\mathsf{OSp}^{\1}_{(\bbc,\bbr)}$, $\mathsf{OSy}_{(\bbc,\bbr)}$ and let $(Z,\kappa)$ be an injective envelope of $X$. Then $(\mcf_{00}(Z),\mcf_{00}(\kappa))$ is an injective envelope for $\mcf_{00}(X)$.
\end{cor}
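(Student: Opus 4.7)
The plan is to reduce the statement to a straightforward verification of rigidity, after noting that the injectivity half has already been handled by Lemma~\ref{inttoreal}. The injective envelope axioms for the extension $(Z,\kappa)$ produced by applying $\mcf_{00}$ are that (i) $\mcf_{00}(Z)$ is injective in the appropriate real category, and (ii) $(\mcf_{00}(Z),\mcf_{00}(\kappa))$ is a rigid (equivalently essential) extension of $\mcf_{00}(X)$ there.

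First I would use Lemma~\ref{inttoreal}: since $(Z,\kappa)$ is an injective envelope of $X$ in $\mathsf{OSp}_{(\bbc,\bbr)}$ (resp.\ $\mathsf{OSp}^{\1}_{(\bbc,\bbr)}$, $\mathsf{OSy}_{(\bbc,\bbr)}$), in particular $Z$ is intermediate injective, and so $\mcf_{00}(Z)$ is injective in $\mathsf{OSp}_{(\bbr,\bbr)}$ (resp.\ $\mathsf{OSp}^{\1}_{(\bbr,\bbr)}$, $\mathsf{OSy}_{(\bbr,\bbr)}$). This takes care of step (i).

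The main point for step (ii) is that when one forgets complex multiplication, the morphism class is unchanged: an $\bbr$-linear completely contractive (resp.\ unital completely contractive, unital completely positive) map between complex operator spaces (resp.\ unital complex operator spaces, complex operator systems) is literally the same as an $\bbr$-linear completely contractive (resp.\ unital completely contractive, unital completely positive) map between the underlying real operator spaces (resp.\ real unital operator spaces, real operator systems), because the matrix norms and the positive cone of a complex operator system are intrinsic features that do not reference complex scalars. Hence any $\bbr$-linear morphism $T:\mcf_{00}(Z)\to\mcf_{00}(Z)$ in the real category is automatically a morphism $T:Z\to Z$ in the intermediate category.

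So to prove rigidity of $(\mcf_{00}(Z),\mcf_{00}(\kappa))$, suppose $T:\mcf_{00}(Z)\to\mcf_{00}(Z)$ is a morphism with $T\circ\mcf_{00}(\kappa)=\mcf_{00}(\kappa)$. Reinterpreting $T$ as an intermediate-category morphism $T:Z\to Z$ satisfying $T\circ\kappa=\kappa$, the intermediate-category rigidity of $(Z,\kappa)$ forces $T=\textup{Id}_Z=\textup{Id}_{\mcf_{00}(Z)}$. Combined with (i), this shows $(\mcf_{00}(Z),\mcf_{00}(\kappa))$ is an injective and rigid (hence essential) extension of $\mcf_{00}(X)$, which is what we wanted. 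I do not anticipate a serious obstacle here: the whole corollary is really just the observation that the ``forgetful'' step from intermediate to real categories alters neither the morphism sets at issue nor (by Lemma~\ref{inttoreal}) injectivity.
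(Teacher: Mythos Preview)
Your proposal is correct and follows essentially the same approach as the paper's proof: both establish injectivity via Lemma~\ref{inttoreal} and then verify rigidity by observing that any $\bbr$-linear morphism $T$ on $\mcf_{00}(Z)$ is already a morphism in the intermediate category, so intermediate rigidity forces $T=\textup{Id}_Z$. If anything, you are slightly more explicit than the paper in justifying why the morphism classes coincide under $\mcf_{00}$.
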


\begin{proof}
    By Remark~\ref{f00}, $Z$ is injective in $\mathsf{OSp}_{(\bbr,\bbr)}$ (resp.\ $\mathsf{OSp}^{\1}_{(\bbr,\bbr)}$, $\mathsf{OSy}_{(\bbr,\bbr)}$). Since $\kappa$ is a completely isometric $\bbr$-linear map by assumption, it suffices to show that $(Z,\kappa)$ is a rigid extension of $X$ in $\mathsf{OSp}_{(\bbr,\bbr)}$ (resp.\ $\mathsf{OSp}^{\1}_{(\bbr,\bbr)}$, $\mathsf{OSy}_{(\bbr,\bbr)}$). Let $T:Z\to Z$ be a completely contractive (resp.\ unital completely contractive, unital completely positive) $\bbr$-linear map such that $T\circ\kappa=\kappa$. Since $(Z,\kappa)$ is a rigid extension of $X$ in $\mathsf{OSp}_{(\bbc,\bbr)}$ (resp.\ $\mathsf{OSp}^{\1}_{(\bbc,\bbr)}$, $\mathsf{OSy}_{(\bbc,\bbr)}$), it follows that $T=\textup{Id}_Z$. Hence $(Z,\kappa)$ is a rigid extension of $X$, and therefore an injective envelope of $X$, in $\mathsf{OSp}_{(\bbr,\bbr)}$ (resp.\ $\mathsf{OSp}^{\1}_{(\bbr,\bbr)}$, $\mathsf{OSy}_{(\bbr,\bbr)}$). 
\end{proof}

\begin{cor}\label{xccplxing}
    If $X$ in $\mathsf{OSp}_{(\bbc,\bbr)}$ (resp.\ $\mathsf{OSp}^{\1}_{(\bbc,\bbr)}$, $\mathsf{OSy}_{(\bbc,\bbr)}$) is injective, then $\mcf_c(X)$ is injective in $\mathsf{OSp}_{(\bbc,\bbc)}$ (resp.\ $\mathsf{OSp}^{\1}_{(\bbc,\bbc)}$, $\mathsf{OSy}_{(\bbc,\bbc)}$).
\end{cor}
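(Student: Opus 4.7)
The plan is to reduce this corollary to an essentially immediate consequence of Lemma~\ref{inttoreal} combined with the previously established preservation of injectivity under complexification in \cite{bck23}.

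First I would pin down how $\mcf_c$ is to be understood on an object of the intermediate category. The complexification functor is only directly defined on real operator spaces/systems, while the objects of $\mathsf{OSp}_{(\bbc,\bbr)}$ (resp.\ $\mathsf{OSp}^{\1}_{(\bbc,\bbr)}$, $\mathsf{OSy}_{(\bbc,\bbr)}$) are already complex spaces carrying only $\bbr$-linear morphisms. The natural interpretation, consistent with the functorial picture, is that $\mcf_c$ on the intermediate category is the composition $\mcf_c \circ \mcf_{00}$, where $\mcf_{00}$ is the forgetful functor from Remark~\ref{f00}: forget the complex structure on $X$ to obtain a real operator space, then pass to its reasonable complexification $X + \ii X$.

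With this interpretation in place, the argument is just a two-step chase. Step one: assume $X$ is injective in the intermediate category. Lemma~\ref{inttoreal} then gives that $\mcf_{00}(X)$ is injective in $\mathsf{OSp}_{(\bbr,\bbr)}$ (resp.\ $\mathsf{OSp}^{\1}_{(\bbr,\bbr)}$, $\mathsf{OSy}_{(\bbr,\bbr)}$). Step two: by the complexification theorems from \cite[Section~4]{bck23} (the asterisked $\mcf_c$-results referenced in the introduction), the functor $\mcf_c$ sends an injective object of a real category to an injective object of the corresponding complex category. Chaining these two facts gives $\mcf_c(X) = \mcf_c(\mcf_{00}(X))$ injective in $\mathsf{OSp}_{(\bbc,\bbc)}$ (resp.\ $\mathsf{OSp}^{\1}_{(\bbc,\bbc)}$, $\mathsf{OSy}_{(\bbc,\bbc)}$).

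The main obstacle, such as it is, is not in the proof itself but in making explicit the interpretation of $\mcf_c$ on the intermediate category; once the tacit use of $\mcf_{00}$ is acknowledged, the corollary follows from already-proven preservation results with no further work.
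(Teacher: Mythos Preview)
Your proposal is correct and follows essentially the same approach as the paper: apply Lemma~\ref{inttoreal} to pass from intermediate injectivity to real injectivity, then invoke the known fact that complexification preserves injectivity from the real to the complex category. The only cosmetic difference is that the paper cites \cite[Proposition~4.1]{ruan03b} for the second step rather than \cite{bck23}, and your explicit discussion of $\mcf_c$ on the intermediate category as $\mcf_c\circ\mcf_{00}$ is left implicit in the paper.
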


\begin{proof}
    Follows by Lemma~\ref{inttoreal} and \cite[Proposition~4.1]{ruan03b}.
\end{proof}

\begin{remark}
    If $X$ in $\mathsf{OSp}_{(\bbc,\bbr)}$ (resp.\ $\mathsf{OSp}^{\1}_{(\bbc,\bbr)}$, $\mathsf{OSy}_{(\bbc,\bbr)}$) is injective, then $X$ is also injective in $\mathsf{OSp}_{(\bbc,\bbc)}$ (resp.\ $\mathsf{OSp}^{\1}_{(\bbc,\bbc)}$, $\mathsf{OSy}_{(\bbc,\bbc)}$) by Lemma~\ref{inttoreal} and \cite[Lemma~4.1]{bck23}.
\end{remark}

\begin{thm}\label{inttocplx}
    Let $X$ in $\mathsf{OSp_{(\bbc,\bbr)}}$ (resp.\ $\mathsf{OSp}^{\1}_{(\bbc,\bbr)}$, $\mathsf{OSy}_{(\bbc,\bbr)}$) and let $(Z,\kappa)$ be an injective envelope of $X$. Then $(\mcf_c(Z),\mcf_c(\kappa))$ is an injective envelope of $\mcf_c(X)$.
\end{thm}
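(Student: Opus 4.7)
The plan is to reduce the statement to the corresponding real-to-complex complexification result established in \cite[Section~4]{bck23}, using Corollary~\ref{f00injenv} as a bridge between the intermediate and real categories. The key observation is that the functor $\mcf_c$ appearing in the theorem --- which goes from the intermediate category $\mathsf{OSp}_{(\bbc,\bbr)}$ (resp.\ $\mathsf{OSp}^{\1}_{(\bbc,\bbr)}$, $\mathsf{OSy}_{(\bbc,\bbr)}$) to the complex category --- factors naturally as $\mcf_c \circ \mcf_{00}$, where $\mcf_{00}$ is the forgetful functor from Remark~\ref{f00} and the second $\mcf_c$ is the Ruan complexification functor on real operator spaces used throughout. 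Indeed, since the Ruan complexification of Theorem~\ref{ruan} is defined on real operator spaces, applying it to an object already lying in the intermediate category implicitly requires first forgetting the existing complex structure. This identifies $\mcf_c(Z)$ with $\mcf_{00}(Z) + \ii\,\mcf_{00}(Z)$, where the outer $\ii$ is a fresh complex structure independent of the one on $Z$.

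Given this factorization, the proof proceeds in two steps. First, by Corollary~\ref{f00injenv} applied to $X$ and its intermediate injective envelope $(Z,\kappa)$, the pair $(\mcf_{00}(Z),\mcf_{00}(\kappa))$ is an injective envelope of $\mcf_{00}(X)$ in $\mathsf{OSp}_{(\bbr,\bbr)}$ (resp.\ $\mathsf{OSp}^{\1}_{(\bbr,\bbr)}$, $\mathsf{OSy}_{(\bbr,\bbr)}$). Second, applying the real-to-complex complexification theorem from \cite[Section~4]{bck23} recalled in the introduction --- which states that $\mcf_c(I(Y))=I(\mcf_c(Y))$ for any real operator space, unital real operator space, or real operator system $Y$ --- to $Y=\mcf_{00}(X)$ with injective envelope $\mcf_{00}(Z)$, we obtain that $(\mcf_c(\mcf_{00}(Z)),\mcf_c(\mcf_{00}(\kappa)))$ is an injective envelope of $\mcf_c(\mcf_{00}(X))$ in the complex category. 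By the factorization of $\mcf_c$ on the intermediate category, this is exactly the statement that $(\mcf_c(Z),\mcf_c(\kappa))$ is an injective envelope of $\mcf_c(X)$, as desired.

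I do not anticipate a significant obstacle, since both ingredients --- Corollary~\ref{f00injenv} and the real-to-complex envelope preservation result of \cite{bck23} --- are already in hand. The only point requiring care is justifying the factorization $\mcf_c=\mcf_c\circ\mcf_{00}$ on the intermediate category, which is essentially a definitional check: the outer complex structure produced by complexification is built ``on top of'' the underlying real structure and does not interact with, nor require knowledge of, the internal complex structure of the intermediate object. Once this identification is made, the argument is a clean two-line reduction, making explicit use of the fact that morphisms in the intermediate and real categories coincide as $\bbr$-linear maps, so that rigidity passes between the two frameworks automatically.
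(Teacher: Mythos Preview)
Your proposal is correct and takes essentially the same approach as the paper: reduce to the real category via Corollary~\ref{f00injenv}, then invoke \cite[Theorem~4.2]{bck23} to pass from the real injective envelope to the complex one. The paper additionally cites Corollary~\ref{xccplxing} to record that $Z_c$ is complex injective before appealing to \cite[Theorem~4.2]{bck23}, but this is subsumed in your use of that theorem and does not constitute a different argument.
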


\begin{proof}
    Let $X$ be a complex operator space (resp.\ unital operator space, operator system) and let $(Z,\kappa)$ be an injective envelope of $X$ in $\mathsf{OSp_{(\bbc,\bbr)}}$ (resp.\ $\mathsf{OSp}^{\1}_{(\bbc,\bbr)}$, $\mathsf{OSy}_{(\bbc,\bbr)}$). By Corollary~\ref{f00injenv}, $(Z,\kappa)$ is an injective envelope of $X$ in $\mathsf{OSp_{(\bbr,\bbr)}}$ (resp.\ $\mathsf{OSp}^{\1}_{(\bbr,\bbr)}$, $\mathsf{OSy}_{(\bbr,\bbr)}$). By Corollary~\ref{xccplxing}, $Z_c$ is injective in $\mathsf{OSp_{(\bbc,\bbc)}}$ (resp.\ $\mathsf{OSp}^{\1}_{(\bbc,\bbc)}$, $\mathsf{OSy}_{(\bbc,\bbc)}$). Further, $\kappa_c$ is a $\bbc$-linear complete isometry by \cite[Theorem~2.1]{ruan03b}. 
    Thus by \cite[Theorem~4.2]{bck23}, it follows that $(Z_c,\kappa_c)$ is an injective envelope for $X_c$. 
\end{proof}

\subsection{From intermediate $G$-categories to complex $G$-categories}

Consider the complexification functor $\mcf_c$ from the category $\mathsf{OSp}^G_{(\bbc,\bbr)}$ (resp.\ $\mathsf{OSp}^{\1,G}_{(\bbc,\bbr)}$, $\mathsf{OSy}^G_{(\bbc,\bbr)}$) to the category $\mathsf{OSp}^G_{(\bbc,\bbc)}$ (resp.\ $\mathsf{OSp}^{\1,G}_{(\bbc,\bbc)}$, $\mathsf{OSy}^G_{(\bbc,\bbc)}$). Previously in the paper, it was possible to prove results in the $G$-categories and the results in the non-$G$-categories followed from those, taking $G=\{e\}$. But, some of the following $G$-category facts rely on the facts of the corresponding non-$G$-category, so the proofs will be given.  

\begin{lemma}\label{ginttogreal}
    Let $X\in\mathsf{OSp}^G_{(\bbc,\bbr)}$ (resp.\ $\mathsf{OSp}^{\1,G}_{(\bbc,\bbr)}$, $\mathsf{OSy}^G_{(\bbc,\bbr)}$) be injective and let $\mcf_{00}$ be as in Remark~\ref{f00}, but now between the $G$-categories. Then $\mcf_{00}(X)$ is injective.
\end{lemma}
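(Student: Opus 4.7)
The plan is to mimic the proof of Lemma~\ref{inttoreal}, but route through the characterization of $G$-injectivity in Proposition~\ref{crginjiff} so that the $G$-equivariant structure is built into the argument from the start. The key observation is that the forgetful functor $\mcf_{00}$ behaves well with respect to the injection $j:X\to\ell^{\infty}(G,X)$ of Hamana's construction: an $\bbr$-linear, $G$-equivariant, completely contractive (resp.\ unital completely contractive, unital completely positive) map between complex $G$-objects remains an $\bbr$-linear, $G$-equivariant, completely contractive (resp.\ unital completely contractive, unital completely positive) map when the complex structure on the targets is forgotten, and $\ell^{\infty}(G,\mcf_{00}(X))$ coincides with $\mcf_{00}(\ell^{\infty}(G,X))$ as a real $G$-operator space (resp.\ unital $G$-operator space, $G$-operator system).

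Concretely, first I would apply Proposition~\ref{crginjiff} to the intermediate $G$-injective $X$: this yields that $X$ is injective in $\mathsf{OSp}_{(\bbc,\bbr)}$ (resp.\ $\mathsf{OSp}^{\1}_{(\bbc,\bbr)}$, $\mathsf{OSy}_{(\bbc,\bbr)}$) together with a $G$-equivariant, surjective, completely contractive (resp.\ unital completely contractive, unital completely positive) idempotent $\bbr$-linear map $\phi:\ell^{\infty}(G,X)\to j(X)$ satisfying $\phi\circ j=\textup{Id}_X$. Next, invoking Lemma~\ref{inttoreal} applied to this intermediate injectivity of $X$, I obtain that $\mcf_{00}(X)$ is injective in $\mathsf{OSp}_{(\bbr,\bbr)}$ (resp.\ $\mathsf{OSp}^{\1}_{(\bbr,\bbr)}$, $\mathsf{OSy}_{(\bbr,\bbr)}$).

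It remains to transport $\phi$ across the forgetful functor. Since forgetting the complex scalar multiplication on the codomain does not affect $\bbr$-linearity, $G$-equivariance, complete contractivity (resp.\ unital complete contractivity, unital complete positivity), or the identities $\phi\circ j=\textup{Id}_X$ and $\phi^2=\phi$, the map $\mcf_{00}(\phi):\ell^{\infty}(G,\mcf_{00}(X))\to j(\mcf_{00}(X))$ is again a $G$-equivariant surjective idempotent $\bbr$-linear map of the required kind. Combining this with the real injectivity of $\mcf_{00}(X)$ obtained above, Proposition~\ref{crginjiff} (applied now to the real $G$-category) immediately gives that $\mcf_{00}(X)$ is injective in $\mathsf{OSp}^{G}_{(\bbr,\bbr)}$ (resp.\ $\mathsf{OSp}^{\1,G}_{(\bbr,\bbr)}$, $\mathsf{OSy}^{G}_{(\bbr,\bbr)}$), as desired.

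The argument is essentially formal, and there is no serious obstacle beyond bookkeeping; the only point that deserves explicit mention is the compatibility $\mcf_{00}(\ell^{\infty}(G,X))=\ell^{\infty}(G,\mcf_{00}(X))$ together with the fact that the inclusion $j$ is defined purely in terms of the $G$-action and so is unchanged by the forgetful functor. Once this is recorded, the two applications of Proposition~\ref{crginjiff} together with Lemma~\ref{inttoreal} close the proof.
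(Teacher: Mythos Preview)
Your proposal is correct and follows essentially the same approach as the paper: both arguments unpack $G$-injectivity via the characterization in Proposition~\ref{crginjiff}, apply Lemma~\ref{inttoreal} to obtain real injectivity of $X$, observe that the $G$-equivariant idempotent $\bbr$-linear map $\ell^{\infty}(G,X)\to X$ survives the forgetful functor unchanged, and then reassemble real $G$-injectivity via the same characterization (the paper cites \cite[Lemma~4.6]{bck23} for this last step, which is just Proposition~\ref{crginjiff} in the real $G$-category). Your explicit remark that $\mcf_{00}(\ell^{\infty}(G,X))=\ell^{\infty}(G,\mcf_{00}(X))$ and that $j$ is unaffected by $\mcf_{00}$ is a nice bit of extra care, but the argument is otherwise identical.
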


\begin{proof}
    If $X$ is injective in $\mathsf{OSp}^G_{(\bbc,\bbr)}$ (resp.\ $\mathsf{OSp}^{\1,G}_{(\bbc,\bbr)}$, $\mathsf{OSy}^G_{(\bbc,\bbr)}$), then $X$ is injective in $\mathsf{OSp}_{(\bbc,\bbr)}$ (resp.\ $\mathsf{OSp}^{\1}_{(\bbc,\bbr)}$, $\mathsf{OSy}_{(\bbc,\bbr)}$), and there exists a $G$-equivariant completely contractive (resp.\ unital completely contractive, unital completely positive) idempotent $\bbr$-linear map $T:\ell^{\infty}(G,X)\to X$. By Lemma~\ref{inttoreal}, $X$ is injective in $\mathsf{OSp}_{(\bbr,\bbr)}$ (resp.\ $\mathsf{OSp}^{\1}_{(\bbr,\bbr)}$, $\mathsf{OSy}_{(\bbr,\bbr)}$). Considering $X$ as a real space, $\ell^{\infty}(G,X)$ is a real space and there exists a $G$-equivariant completely contractive (resp.\ unital completely contractive, unital completely positive) idempotent $\bbr$-linear map $T:\ell^{\infty}(G,X)\to X$ by above. But this is equivalent to $X$ being injective in $\mathsf{OSp}^G_{(\bbr,\bbr)}$ (resp.\ $\mathsf{OSp}^{\1,G}_{(\bbr,\bbr)}$, $\mathsf{OSy}^G_{(\bbr,\bbr)}$) by \cite[Lemma~4.6]{bck23}.
\end{proof}

\begin{cor}\label{f00ginjenv}
    Let $X\in\mathsf{OSp}^G_{(\bbc,\bbr)}$ (resp.\ in $\mathsf{OSp}^{\1,G}_{(\bbc,\bbr)}$, $\mathsf{OSy}^G_{(\bbc,\bbr)}$ and let $(Z,\kappa)$ be an  injective envelope of $X$. Then $(\mcf_{00}(Z),\mcf_{00}(\kappa))$ is an injective envelope for $\mcf_{00}(X)$.
\end{cor}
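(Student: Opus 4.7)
The plan is to mimic the proof of Corollary~\ref{f00injenv} almost verbatim, substituting the $G$-equivariant versions of each ingredient. The key observation is that the morphisms in the intermediate $G$-category $\mathsf{OSp}^G_{(\bbc,\bbr)}$ (resp.\ $\mathsf{OSp}^{\1,G}_{(\bbc,\bbr)}$, $\mathsf{OSy}^G_{(\bbc,\bbr)}$) and in the real $G$-category $\mathsf{OSp}^G_{(\bbr,\bbr)}$ (resp.\ $\mathsf{OSp}^{\1,G}_{(\bbr,\bbr)}$, $\mathsf{OSy}^G_{(\bbr,\bbr)}$) are literally the same sets of maps once we apply $\mcf_{00}$ to the objects, because both involve $G$-equivariant $\bbr$-linear completely contractive (resp.\ unital completely contractive, unital completely positive) maps.

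First, I would invoke Lemma~\ref{ginttogreal} to deduce that $\mcf_{00}(Z)$ is injective in the appropriate real $G$-category. Next, since $\kappa$ is a $G$-equivariant $\bbr$-linear complete isometry (resp.\ unital complete isometry) by hypothesis, $\mcf_{00}(\kappa)$ is a morphism in the target real $G$-category that exhibits $\mcf_{00}(X)$ as a subobject of $\mcf_{00}(Z)$. So $(\mcf_{00}(Z),\mcf_{00}(\kappa))$ is an injective extension of $\mcf_{00}(X)$.

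To upgrade this to an injective envelope, it suffices to show rigidity. Let $T:\mcf_{00}(Z)\to\mcf_{00}(Z)$ be any $G$-equivariant completely contractive (resp.\ unital completely contractive, unital completely positive) $\bbr$-linear map with $T\circ\mcf_{00}(\kappa)=\mcf_{00}(\kappa)$. Viewed instead as a self-map of $Z$ in the intermediate $G$-category, $T$ is still $G$-equivariant, $\bbr$-linear, and of the required morphism type (the definitions of these morphism classes do not see whether the ambient field is treated as $\bbr$ or $\bbc$ once we restrict to $\bbr$-linear maps). Hence the rigidity of $(Z,\kappa)$ in the intermediate $G$-category forces $T=\textup{Id}_Z=\textup{Id}_{\mcf_{00}(Z)}$.

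I do not anticipate any genuine obstacle here; the argument is a direct transcription of Corollary~\ref{f00injenv} with ``$G$-equivariant'' inserted throughout and Lemma~\ref{ginttogreal} used in place of Lemma~\ref{inttoreal}. The only thing to be a little careful about is verifying that the morphism classes in $\mathsf{OSp}^G_{(\bbc,\bbr)}$ and $\mathsf{OSp}^G_{(\bbr,\bbr)}$ really coincide on a fixed object (and analogously in the unital and operator system settings), which is immediate from the definitions, so the proof should be short.
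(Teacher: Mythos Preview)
Your proposal is correct and is essentially identical to the paper's own proof: the paper also invokes Lemma~\ref{ginttogreal} (implicitly, via ``it suffices to show rigidity'') and then says the rigidity argument ``follows similarly as in Corollary~\ref{f00injenv}'', which is exactly the transcription you spell out. Your observation that the morphism classes in the intermediate and real $G$-categories coincide on a fixed object is precisely the content of that similarity.
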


\begin{proof}
    Let $X$ in $\mathsf{OSp}^G_{(\bbc,\bbr)}$ (resp.\ $\mathsf{OSp}^{\1,G}_{(\bbc,\bbr)}$, $\mathsf{OSy}^G_{(\bbc,\bbr)}$) and let $(Z,\kappa)$ be an injective envelope of $X$. Since $\kappa$ is a $G$-equivariant completely isometric $\bbr$-linear map by assumption, it suffices to show that $(Z,\kappa)$ is a rigid extension in $\mathsf{OSp}^G_{(\bbr,\bbr)}$ (resp.\ $\mathsf{OSp}^{\1,G}_{(\bbr,\bbr)}$, $\mathsf{OSy}^G_{(\bbr,\bbr)}$). It follows similarly as in Corollary~\ref{f00injenv} that $(Z,\kappa)$ is a rigid extension, and hence an injective envelope, of $X$ in $\mathsf{OSp}^G_{(\bbr,\bbr)}$ (resp.\ $\mathsf{OSp}^{\1,G}_{(\bbr,\bbr)}$, $\mathsf{OSy}^G_{(\bbr,\bbr)}$).
\end{proof}

\begin{cor}
    If $X\in\mathsf{OSp}^G_{(\bbc,\bbr)}$ (resp.\ $\mathsf{OSp}^{\1,G}_{(\bbc,\bbr)}$, $\mathsf{OSy}^G_{(\bbc,\bbr)}$) is injective, then $\mcf_c(X)$ is injective.
\end{cor}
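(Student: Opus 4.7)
The plan is to mirror the non-equivariant argument in Corollary~\ref{xccplxing}, passing through the real $G$-category via the forgetful functor $\mcf_{00}$ of Remark~\ref{f00} (now interpreted between $G$-categories as in Lemma~\ref{ginttogreal}). First, I would apply Lemma~\ref{ginttogreal} to conclude that $X$, viewed as a real $G$-operator space (resp.\ unital $G$-operator space, $G$-operator system), is injective in $\mathsf{OSp}^G_{(\bbr,\bbr)}$ (resp.\ $\mathsf{OSp}^{\1,G}_{(\bbr,\bbr)}$, $\mathsf{OSy}^G_{(\bbr,\bbr)}$). This reduces the problem to showing that the complexification of a real $G$-injective object is complex $G$-injective, i.e.\ a $G$-equivariant version of \cite[Proposition~4.1]{ruan03b}.

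To establish this $G$-equivariant statement, I would invoke the characterization in Proposition~\ref{crginjiff}: real $G$-injectivity of $X$ is equivalent to real injectivity of $X$ together with the existence of a $G$-equivariant completely contractive (resp.\ unital completely contractive, unital completely positive) $\bbr$-linear idempotent $T\colon \ell^{\infty}(G,X)\to j(X)$ with $T\circ j=\textup{Id}_X$. From the real injectivity of $X$, the non-equivariant Ruan result \cite[Proposition~4.1]{ruan03b} directly gives complex injectivity of $X_c$. Complexifying the idempotent $T$ produces a $\bbc$-linear completely contractive (resp.\ unital completely contractive, unital completely positive) idempotent $T_c$, which, after identifying $\ell^{\infty}(G,X)_c$ with $\ell^{\infty}(G,X_c)$ in a $G$-equivariant way, takes the form $T_c\colon \ell^{\infty}(G,X_c)\to j_c(X_c)$ with $T_c\circ j_c=\textup{Id}_{X_c}$. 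Since the $G$-action on $\ell^{\infty}(G,X)_c$ is inherited diagonally from the $G$-action on $\ell^{\infty}(G,X)$, the $G$-equivariance of $T$ transfers to $T_c$. Applying Proposition~\ref{crginjiff} again, this time in the complex setting, yields that $X_c=\mcf_c(X)$ is injective in $\mathsf{OSp}^G_{(\bbc,\bbc)}$ (resp.\ $\mathsf{OSp}^{\1,G}_{(\bbc,\bbc)}$, $\mathsf{OSy}^G_{(\bbc,\bbc)}$).

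The main obstacle I expect is verifying the canonical identification $\ell^{\infty}(G,X)_c\cong \ell^{\infty}(G,X_c)$ as complex $G$-operator spaces (resp.\ unital $G$-operator spaces, $G$-operator systems), together with the compatibility of the canonical inclusions $j$ and $j_c$ under this identification. The matrix-norm identification is a consequence of Theorem~\ref{ruan} (uniqueness of the reasonable complexification norm), and the compatibility of the $G$-actions and of $j_c = (j)_c$ is a direct computation on pure tensors $x+\ii y$. Once this bookkeeping is done, both halves of the characterization in Proposition~\ref{crginjiff} are in hand and the conclusion is immediate.
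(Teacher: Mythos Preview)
Your proposal is correct and follows essentially the same route as the paper: both first invoke Lemma~\ref{ginttogreal} to pass to the real $G$-category, and then use that real $G$-injectivity of $X$ implies complex $G$-injectivity of $X_c$. The only difference is that the paper dispatches this second step by citing \cite[Corollary~4.8]{bck23} directly, whereas you reprove that corollary from scratch via the characterization in Proposition~\ref{crginjiff} together with \cite[Proposition~4.1]{ruan03b} and the identification $\ell^\infty(G,X)_c\cong\ell^\infty(G,X_c)$. Your expanded argument is exactly how \cite[Corollary~4.8]{bck23} is established, so nothing is lost or gained beyond self-containment.
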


\begin{proof}
    Follows by the Lemma~\ref{ginttogreal} and by \cite[Corollary~4.8]{bck23}.
\end{proof}

\begin{remark}
    If $X\in\mathsf{OSp}^G_{(\bbc,\bbr)}$ (resp.\ $\mathsf{OSp}^{\1,G}_{(\bbc,\bbr)}$, $\mathsf{OSy}^G_{(\bbc,\bbr)}$) is injective, then $X$ is injective in $\mathsf{OSp}^G_{(\bbc,\bbc)}$ (resp.\ $\mathsf{OSp}^{\1,G}_{(\bbc,\bbc)}$, $\mathsf{OSy}^G_{(\bbc,\bbc)}$) by Lemma~\ref{ginttogreal} and \cite[Corollary~4.12]{bck23}.
\end{remark}

\begin{thm}
    Let $X\in\mathsf{OSp}^G_{(\bbc,\bbr)}$ (resp.\ $\mathsf{OSp}^{\1,G}_{(\bbc,\bbr)}$, $\mathsf{OSy}^G_{(\bbc,\bbr)}$) and let $(Z,\kappa)$ be an injective envelope of $X$. Then $(\mcf_c(Z),\mcf_c(\kappa
    ))$ is an injective envelope of $\mcf_c(X)$.
\end{thm}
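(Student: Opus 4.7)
The plan is to reduce this equivariant intermediate-to-complex statement to the real-$G$-category version already established in \cite{bck23}, mirroring the strategy used in the non-equivariant Theorem~\ref{inttocplx}. The key observation is that complexification factors through the real $G$-category: an intermediate injective envelope is simultaneously a real $G$-injective envelope (by the ``jump'' corollary), and complexification of real $G$-injective envelopes is already known to give complex $G$-injective envelopes.

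First I would apply Corollary~\ref{f00ginjenv} to conclude that $(\mcf_{00}(Z),\mcf_{00}(\kappa))$ is an injective envelope of $\mcf_{00}(X)$ in $\mathsf{OSp}^G_{(\bbr,\bbr)}$ (resp.\ $\mathsf{OSp}^{\1,G}_{(\bbr,\bbr)}$, $\mathsf{OSy}^G_{(\bbr,\bbr)}$). This strips off the complex structure on $Z$ and $X$ and places us in the real $G$-category, where the complexification theorem of \cite[Theorem~4.2]{bck23} (together with its $G$-equivariant counterpart used throughout \cite{bck23}) is available.

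Next I would invoke the complexification functor: by \cite[Theorem~2.1]{ruan03b}, $\kappa_c$ is a $\bbc$-linear complete isometry (resp.\ unital complete isometry in the unital/system cases), and by \cite[Lemma~4.4]{bck23} it is automatically $G$-equivariant with the canonical $G$-action $g(z+\ii w)=gz+\ii gw$. Then applying the real-to-complex equivariant result of \cite[Theorem~4.2]{bck23} to $(\mcf_{00}(Z),\mcf_{00}(\kappa))$ produces $(Z_c,\kappa_c) = (\mcf_c(Z),\mcf_c(\kappa))$ as an injective envelope of $X_c = \mcf_c(X)$ in $\mathsf{OSp}^G_{(\bbc,\bbc)}$ (resp.\ $\mathsf{OSp}^{\1,G}_{(\bbc,\bbc)}$, $\mathsf{OSy}^G_{(\bbc,\bbc)}$).

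The only subtlety I anticipate is a bookkeeping one: one must verify that the complex structure on $X$ coming from the intermediate category matches the one produced on $X_c$ when one first forgets and then complexifies. But since $X$ was already a \emph{complex} operator space (resp.\ unital operator space, operator system) to begin with, the canonical identification $X_c \cong X$ as complex objects is precisely the content ensuring that $\mcf_c \circ \mcf_{00}$ acts as the identity on complex objects up to canonical isomorphism; Theorem~\ref{ruan} guarantees this reasonable complexification is the unique one compatible with the matrix norm, and the $G$-action transfers through without change. Once this identification is made, the proof is essentially a one-line composition of Corollary~\ref{f00ginjenv} with \cite[Theorem~4.2]{bck23}.
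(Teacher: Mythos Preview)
Your first two paragraphs are correct and match the paper's approach exactly: the paper's proof says ``Follows as in Theorem~\ref{inttocplx}, considering the appropriate objects and morphisms and using Corollary~\ref{f00ginjenv},'' which is precisely the factorization through $\mcf_{00}$ followed by the real-to-complex $G$-equivariant result from \cite{bck23} that you describe.

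Your final paragraph, however, introduces a genuine error that you should delete. The claim that ``the canonical identification $X_c \cong X$ as complex objects'' holds is false: for a complex operator space $X$ viewed as real, its complexification is $X_c \cong X \oplus_\infty \bar{X}$ (this is \cite[Lemma~5.1]{b23}, cited later in the paper), not $X$ itself. Fortunately no such identification is needed. The functor $\mcf_c$ from the intermediate $G$-category to the complex $G$-category \emph{is defined} as complexification of the underlying real object, so $\mcf_c(X) = (\mcf_{00}(X))_c = X_c$ on the nose; there is nothing to match up. Once you recognize that $\mcf_c$ in this context literally means ``forget the complex structure, then complexify,'' the bookkeeping concern evaporates and the proof is the clean two-step composition you already gave.
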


\begin{proof}
    Follows as in Theorem~\ref{inttocplx}, considering the appropriate objects and morphisms and using Corollary~\ref{f00ginjenv}. 
\end{proof}


\subsection{From unital operator space categories to operator system categories} Consider the Arveson functor $\mcf_A$ from the category $\mathsf{OSp}^{\1}_{(\bbf_1,\bbf_2)}$ to the category $\mathsf{OSy}_{(\bbf_1,\bbf_2)}$. Here only the intermediate case is considered. For the real and complex cases, see \cite[Proposition~4.9]{bck23} taking $G=\{e\}$.

The following theorem is the intermediate version of the well-known result of Choi-Effros in the complex case (\cite{choieffros}).

\begin{thm}\label{enviscalg1}
	Let $X\in\mathsf{OSp}^{\1}_{(\bbc,\bbr)}$ be injective and let $\phi:B(\mch)\to X$ be a unital completely contractive idempotent $\bbr$-linear map onto $X$ for some complex Hilbert space $\mch$. Then setting $x\circ y\coloneqq\phi(xy)$ defines a multiplication on $X$, and $X$ together with this multiplication and its usual $\ast$-operation is a complex $C^*$-algebra. 
\end{thm}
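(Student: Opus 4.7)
The strategy mirrors the classical Choi--Effros proof, with the twist that we must handle $\phi$ being only $\bbr$-linear and ultimately verify that the resulting multiplication is $\bbc$-bilinear. The plan proceeds in two stages: first obtain a real $C^*$-algebra structure on $X$, then force that structure to be complex via a multiplicative-domain argument.

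For the first stage, by Lemma~\ref{inttoreal}, $X$ viewed as a real unital operator space is injective in $\mathsf{OSp}^{\1}_{(\bbr,\bbr)}$, while $B(\mch)$ viewed as a real unital operator space remains a real $C^*$-algebra (with the same involution). The map $\phi$ is then a unital completely contractive $\bbr$-linear idempotent onto $X$. I would invoke the real-linear version of the Choi--Effros theorem, whose proof is a verbatim transcription of the classical argument with all scalars taken over $\bbr$, to conclude that $x \circ y := \phi(xy)$ is associative and that $(X, \circ, *)$, with $*$ inherited from $B(\mch)$, is a real $C^*$-algebra. In this real $C^*$-algebra setup, $\phi$ is automatically a unital completely positive $\bbr$-linear map between the real $C^*$-algebras $B(\mch)$ and $X$.

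For the second stage, it suffices to show $\phi$ is $\bbc$-linear, which will then force $\circ$ to be $\bbc$-bilinear. Since $X$ is a complex unital subspace of $B(\mch)$, we have $i = i \cdot 1 \in X$, and since $\phi$ is idempotent onto $X$, it follows that $\phi(i) = i$. The scalar $i$ is a unitary in $B(\mch)$ with $i^*i = 1 = ii^*$, and
\[
\phi(i^*i) = \phi(1) = 1 = (-i) \circ i = \phi(i)^* \circ \phi(i),
\]
with the analogous identity for $ii^*$. Hence $i$ lies in the multiplicative domain of $\phi$, and the standard multiplicative domain identity gives
\[
\phi(iz) = \phi(i) \circ \phi(z) = i \circ \phi(z) = \phi\bigl(i \cdot \phi(z)\bigr) = i\,\phi(z)
\]
for every $z \in B(\mch)$, where the last equality uses that $\phi(z) \in X$ and $X$ is closed under multiplication by $i$. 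Thus $\phi$ is $\bbc$-linear, $\circ$ is $\bbc$-bilinear, and $(X, \circ, *)$ is a complex $C^*$-algebra.

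The main obstacle I expect is Step 1, namely justifying that the classical Choi--Effros construction (originally formulated for UCP maps between complex $C^*$-algebras) transcribes faithfully to the real, unital-operator-space setting, where $X$ need not be assumed self-adjoint a priori. In particular, one must carefully set up the Schwarz-type inequality needed for associativity of $\circ$ and for the involution to behave correctly, which in turn relies on real injectivity of $X$ giving it enough algebraic structure (e.g., self-adjointness in $B(\mch)$ up to a completely isometric $\bbr$-linear identification). This is essentially bookkeeping, but it is where the bulk of the technical work of the proof lies. Once Step 1 is in place, the upgrade to $\bbc$-linearity in Step 2 is the clean computation displayed above.
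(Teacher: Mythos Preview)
Your proposal is correct, and the first stage matches the paper's argument exactly: both reduce via Lemma~\ref{inttoreal} to real injectivity and then invoke the real Choi--Effros theorem. Your worry about Step~1 is unfounded, as the paper simply cites Sharma \cite[Theorem~4.9]{sharma14} for the real Choi--Effros result, so no new bookkeeping is needed.

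The genuine difference is in the upgrade from real to complex $C^*$-algebra. The paper dispatches this in one line by citing \cite[Theorem~4.14]{bck23}, a general result that an injective real unital operator space which happens to be a complex operator space is automatically a complex $C^*$-algebra. Your multiplicative-domain argument is a direct, self-contained alternative: by observing that $i = i\cdot 1 \in X$ is fixed by $\phi$ and lies in the multiplicative domain (since $\phi(i^*i)=1=\phi(i)^*\circ\phi(i)$), you deduce $\phi(iz)=i\circ\phi(z)=\phi(i\,\phi(z))=i\,\phi(z)$, forcing $\phi$ to be $\bbc$-linear and $\circ$ to be $\bbc$-bilinear. This is more transparent about \emph{why} the complex structure is compatible with $\circ$, and avoids the external citation; the paper's route is shorter but relies on a result whose proof ultimately contains a similar idea. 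Both approaches are valid, and yours has the pedagogical advantage of exhibiting the mechanism explicitly.
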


\begin{proof}
    Note that $X\subseteq B(\mch)$ for some complex Hilbert space $\mch$ and a unital completely contractive idempotent $\bbr$-linear map from $B(\mch)$ onto $X$ always exists by using injectivity to extend $\textup{Id}_X$. So, consider $X$ and $B(\mch)$ to be real unital operator spaces. Then both are injective in $\mathsf{OSp}^{\1}_{(\bbr,\bbr)}$ by Lemma~\ref{inttoreal}. By \cite[Theorem~4.9]{sharma14}, $X$ (resp.\ $B(\mch)$) is a unital real $C^*$-algebra. By \cite[Theorem 4.14]{bck23}, $X$ (resp.\ $B(\mch)$ is a unital complex $C^*$-algebra.
\end{proof}

\begin{prop}\label{mcfainj}
    If $X$ is in $\mathsf{OSp}^{\1}_{(\bbc,\bbr)}$ is injective, then $\mcf_A(X)$ is injective.
\end{prop}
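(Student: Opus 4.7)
The plan is to exploit Theorem~\ref{enviscalg1} to argue that an injective object $X$ in $\mathsf{OSp}^{\1}_{(\bbc,\bbr)}$ is already $\ast$-closed inside any $B(\mch)$ it sits in, so the Arveson operator system $\mcf_A(X) = X + X^\star$ collapses to $X$ itself. Injectivity of $X$ as an operator system in the intermediate category then follows from Corollary~\ref{intermedinjiff} together with the fact that, between (complex or real) operator systems, a unital completely contractive $\bbr$-linear map is automatically completely positive.

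More precisely, first I would fix a concrete representation $X\subseteq B(\mch)$ for some complex Hilbert space $\mch$. By Corollary~\ref{intermedinjiff}, the injectivity of $X$ in $\mathsf{OSp}^{\1}_{(\bbc,\bbr)}$ yields a unital completely contractive idempotent $\bbr$-linear projection $\phi : B(\mch) \to X$. Applying Theorem~\ref{enviscalg1} to $\phi$, the space $X$ endowed with the product $x\circ y \coloneqq \phi(xy)$ and the usual involution of $B(\mch)$ is a complex $C^*$-algebra. In particular, $X$ is self-adjoint as a subspace of $B(\mch)$, that is $X^{\star}=X$.

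Once this is in hand, the Arveson operator system $\mcf_A(X) = X + X^{\star}$, which by \cite[1.3.7]{blm04} is essentially unique independently of the representation, coincides with $X$ as a unital operator system inside $B(\mch)$. Now view the same idempotent $\phi : B(\mch) \to X$ with $X$ regarded as a complex operator system. As recalled in the introduction, a unital $\bbr$-linear map between operator systems is completely positive if and only if it is completely contractive; hence $\phi$ is a unital completely positive idempotent $\bbr$-linear projection from $B(\mch)$ onto $X = \mcf_A(X)$. By Corollary~\ref{intermedinjiff}, $\mcf_A(X)$ is injective in $\mathsf{OSy}_{(\bbc,\bbr)}$.

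The only delicate point in the plan is invoking Theorem~\ref{enviscalg1}: one must be sure that the involution used to turn $X$ into a $C^*$-algebra is indeed the ambient involution of $B(\mch)$, so that the conclusion $X^{\star}=X$ really is a statement inside $B(\mch)$ and not merely an abstract fact about an a priori different $\ast$-structure. Once that is granted, the essential uniqueness of $\mcf_A(X)$ ensures the identification $\mcf_A(X)=X$ at the operator system level, and the rest of the argument is a direct application of the intermediate versions of the standard operator-system facts already established in this section.
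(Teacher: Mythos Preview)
Your proposal is correct and follows essentially the same approach as the paper: both use Theorem~\ref{enviscalg1} to conclude that $X$ is a $C^*$-algebra, hence $\mcf_A(X)=X+X^{\star}=X$, and then invoke the equivalence of unital complete contractivity and unital complete positivity between operator systems. The only cosmetic difference is that you conclude injectivity of $\mcf_A(X)$ via the idempotent-projection characterization of Corollary~\ref{intermedinjiff}, whereas the paper verifies the extension property directly; your route is arguably cleaner, and your explicit attention to why the involution in Theorem~\ref{enviscalg1} is the ambient one from $B(\mch)$ is a point the paper leaves implicit.
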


\begin{proof}
    Since $X$ is injective, by Theorem~\ref{enviscalg1}, it is a (complex) $C^*$-algebra. This means that $X+X^{\star}=X$. Let $Y$ and $W$ be unital complex operator space, let $\iota:Y\to W$ be a unital complete isometry, and let $\vphi:Y\to X+X^{\star}$ be a unital complete isometry. Then, noting that every unital completely positive map is a unital completely contractive map and considering all the spaces as unital operator spaces, there exists a unital completely contractive extension $\tilde{\vphi}:W\to X=X+X^{\star}$. But $\tilde{\vphi}$ is equivalently a unital completely positive map. It follows that $X+X^{\star}$ is an injective operator system.
\end{proof}

\begin{thm}\label{ginjenvelope}
	If $X$ is in $\mathsf{OSp}^{\1}_{(\bbc,\bbr)}$ and $(Z,\kappa)$ is an injective envelope of $X$, then $(\mcf_A(Z),\mcf_A(\kappa))$ is an injective envelope of $\mcf_A(X)$.
\end{thm}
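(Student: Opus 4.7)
The plan is to combine Proposition~\ref{mcfainj} with the $C^*$-algebra structure of the injective envelope given by Theorem~\ref{enviscalg1}, and then to reduce rigidity in $\mathsf{OSy}_{(\bbc,\bbr)}$ back to the rigidity of $(Z,\kappa)$ in $\mathsf{OSp}^{\1}_{(\bbc,\bbr)}$. The first step is immediate: Proposition~\ref{mcfainj} tells us that $\mcf_A(Z)=Z+Z^{\star}$ is injective in $\mathsf{OSy}_{(\bbc,\bbr)}$. Moreover, since $Z$ is injective in $\mathsf{OSp}^{\1}_{(\bbc,\bbr)}$, Theorem~\ref{enviscalg1} shows that $Z$ is a complex $C^*$-algebra with its inherited involution, so $Z^{\star}\subseteq Z$ and $\mcf_A(Z)=Z$ as complex operator systems.

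Second, I would verify that $\mcf_A(\kappa):X+X^{\star}\to\mcf_A(Z)=Z$ is a legitimate extension in $\mathsf{OSy}_{(\bbc,\bbr)}$, i.e., a unital $\bbr$-linear complete order embedding. Since $\kappa:X\to Z$ is a unital $\bbr$-linear complete isometry, the canonical formula $\mcf_A(\kappa)(x+y^*)=\kappa(x)+\kappa(y)^*$ is a well-defined unital $\bbr$-linear map into $Z$; the definition of the Arveson functor (cf.\ \cite[Lemma~1.3.6]{blm04}, now applied in the $\bbr$-linear setting) then gives that $\mcf_A(\kappa)$ is a unital completely positive complete order embedding.

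Finally, for rigidity, let $T:\mcf_A(Z)\to\mcf_A(Z)$ be a unital completely positive $\bbr$-linear map with $T\circ\mcf_A(\kappa)=\mcf_A(\kappa)$. Since unital completely positive maps between (real or complex) operator systems are automatically unital completely contractive, $T$ is a unital completely contractive $\bbr$-linear self-map of $Z=\mcf_A(Z)$. Restricting the identity $T\circ\mcf_A(\kappa)=\mcf_A(\kappa)$ to $X\subseteq X+X^{\star}$ gives $T\circ\kappa=\kappa$. Rigidity of the injective envelope $(Z,\kappa)$ in $\mathsf{OSp}^{\1}_{(\bbc,\bbr)}$ then forces $T=\textup{Id}_Z=\textup{Id}_{\mcf_A(Z)}$, so $(\mcf_A(Z),\mcf_A(\kappa))$ is a rigid, hence essential, injective extension of $\mcf_A(X)$ in $\mathsf{OSy}_{(\bbc,\bbr)}$. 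I expect the only potentially delicate point to be the verification that $\mcf_A(\kappa)$ is a complete order embedding in the intermediate ($\bbr$-linear) setting; once that is in hand, the remainder transfers cleanly via the equivalence of UCP and UCC on operator systems combined with the rigidity already available in the unital operator space category.
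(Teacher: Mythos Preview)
Your proof is correct and follows essentially the same approach as the paper: invoke Proposition~\ref{mcfainj} (together with the $C^*$-algebra structure from Theorem~\ref{enviscalg1}) and \cite[Lemma~1.3.6]{blm04} to see that $(\mcf_A(Z),\mcf_A(\kappa))=(Z,\tilde\kappa)$ is an injective extension of $X+X^{\star}$, then establish rigidity by restricting $T\circ\tilde\kappa=\tilde\kappa$ to $X$ and appealing to rigidity of $(Z,\kappa)$ in $\mathsf{OSp}^{\1}_{(\bbc,\bbr)}$. The paper's proof is slightly terser but structurally identical; your flagged concern about $\mcf_A(\kappa)$ being a complete order embedding is exactly what the paper handles via the citation to \cite[Lemma~1.3.6]{blm04}.
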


\begin{proof}
    By Proposition~\ref{mcfainj} and \cite[Lemma 1.3.6]{blm04}, $(Z+Z^{\star},\tilde{\kappa})=(Z,\tilde{\kappa})$ is an injective extension of $X+X^{\star}$. So it suffices to show that it is a rigid extension of $X+X^{\star}$. Let $T:Z\to Z$ be a unital completely positive map such that $T\circ\tilde{\kappa}=\tilde{\kappa}$. Note that $(T\circ\tilde{\kappa})\arrowvert_{X}=T\circ\kappa\arrowvert_X=T\circ\kappa$ and that $\tilde{\kappa}\arrowvert_X=\kappa$ and therefore $T\circ\kappa=\kappa$. Since $(Z,\kappa)$ is a rigid extension of $X$, $T=\textup{Id}_Z$, which implies that $(Z,\tilde{\kappa})$ is a rigid extension of $X+X^{\star}$. Thus $(Z,\tilde{\kappa})$ is an injective envelope of $X+X^{\star}$.
\end{proof}

\begin{remark}\label{r1}
    If $X$ is a unital operator space, then any injective envelope $(W,j)$ of $X$ in the category of unital operator spaces has a unique product and unique involution with respect to which it is a unital $C^*$-algebra (the uniqueness may be seen by \cite[Corollary 1.3.10]{blm04}).
\end{remark}

\begin{lemma}\label{ieopsyforuopsp}
    Let $(Z,\kappa)$ be the injective envelope of $\mcf_A(X)$. Then $(Z,\kappa\arrowvert_X)$ is an injective envelope of $X$.
\end{lemma}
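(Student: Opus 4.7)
The plan is to verify the two defining properties of $(Z, \kappa|_X)$ being an injective envelope of $X$ in $\mathsf{OSp}^{\1}_{(\bbc,\bbr)}$: that $Z$ is injective in this category and that the extension is rigid. Since $(Z, \kappa)$ is an injective envelope of $\mcf_A(X) = X + X^{\star}$ in $\mathsf{OSy}_{(\bbc,\bbr)}$, Corollary~\ref{intermedinjiff} produces a unital completely positive $\bbr$-linear idempotent $\Phi : B(\mch) \to Z$. Because unital completely positive and unital completely contractive coincide for maps between operator systems (as recalled just after Theorem~\ref{ruan}), $\Phi$ is also unital completely contractive, and a second application of Corollary~\ref{intermedinjiff} gives injectivity of $Z$ in $\mathsf{OSp}^{\1}_{(\bbc,\bbr)}$. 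The restriction $\kappa|_X : X \to Z$ is clearly a unital $\bbr$-linear complete isometry, since $\kappa$ is one and $X \hookrightarrow \mcf_A(X)$ is a unital complete isometry.

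For rigidity, suppose $T : Z \to Z$ is a unital $\bbr$-linear complete contraction satisfying $T \circ \kappa|_X = \kappa|_X$. Since $Z$ is an operator system, the same equivalence as above promotes $T$ to a unital completely positive $\bbr$-linear map. A unital completely positive $\bbr$-linear map on an operator system is automatically $*$-preserving, i.e., $T(z^*) = T(z)^*$ for all $z \in Z$: viewing $Z$ as a real operator system inside $B(\mch)$, this is the standard real-linear version of self-adjointness of UCP maps, and the real and complex adjoints coincide there. Using that $\kappa$ is a morphism of operator systems, so $\kappa(x^*) = \kappa(x)^*$ for $x \in X$, we compute
$$T(\kappa(x^*)) = T(\kappa(x)^*) = T(\kappa(x))^* = \kappa(x)^* = \kappa(x^*).$$
Hence $T \circ \kappa$ agrees with $\kappa$ on both $X$ and $X^{\star}$, and by $\bbr$-linearity on all of $\mcf_A(X) = X + X^{\star}$. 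Since $(Z, \kappa)$ is rigid as the injective envelope of $\mcf_A(X)$ in $\mathsf{OSy}_{(\bbc,\bbr)}$, we conclude $T = \textup{Id}_Z$, which establishes rigidity of $(Z, \kappa|_X)$ in $\mathsf{OSp}^{\1}_{(\bbc,\bbr)}$.

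The main obstacle is the apparent mismatch in information: the hypothesis $T \circ \kappa = \kappa$ on $X$ is weaker than what the operator system envelope rigidity demands, namely agreement on all of $X + X^{\star}$. The gap is closed by two consequences of the operator system structure of $Z$: first, that unital complete contractivity of $T$ automatically upgrades to unital complete positivity, and second, that unital completely positive $\bbr$-linear maps preserve adjoints. Together these transport the rigidity from $\mcf_A(X)$ back down to $X$.
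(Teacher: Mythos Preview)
Your proof is correct and follows essentially the same approach as the paper. The paper invokes Lemma~\ref{ffinj} (with $G=\{e\}$) for injectivity where you use Corollary~\ref{intermedinjiff} directly, and it phrases the rigidity step via the Arveson extension notation $\widetilde{(T\circ\kappa)|_X}=T\circ\kappa$ and $\widetilde{\kappa|_X}=\kappa$ rather than your explicit computation on $X^\star$, but the substance---upgrading $T$ to a $*$-linear map via the UCC\,$\Leftrightarrow$\,UCP equivalence and then extending the identity $T\circ\kappa|_X=\kappa|_X$ to all of $X+X^\star$---is identical.
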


\begin{proof}
    Let $(Z,\kappa)$ be an injective envelope of $X+X^{\star}$ in $\mathsf{OSy}_{(\bbc,\bbr)}$. Then $Z$ is an injective unital operator space by Lemma~\ref{ffinj} (letting $G=\{e\}$) and $\kappa\arrowvert_X$ is a unital complete isometry. Therefore, $(Z,\kappa\arrowvert_X)$ is an injective extension, so it suffices to show that it is a rigid extension of $X$. To see this, let $T:Z\to Z$ be a unital completely contractive map such that $T\circ\kappa\arrowvert_X=\kappa\arrowvert_X$. Note that $(T\circ\kappa)\arrowvert_X=T\circ\kappa\arrowvert_X$, so $\widetilde{(T\circ\kappa)\arrowvert_X}=\widetilde{T\circ\kappa\arrowvert_X}$. But $T$ and $\kappa$ are both equivalently unital completely positive maps, and hence $^*$-linear. Therefore $\widetilde{(T\circ\kappa)\arrowvert_X}=T\circ\kappa$ and $\widetilde{\kappa\arrowvert_X}=\kappa$. It follows that $T\circ\kappa=\kappa$ and since $(Z,\kappa)$ is a rigid extension of $X+X^{\star}$, it follows that $T=\textup{Id}_Z$. But this implies that $(Z,\kappa\arrowvert_X)$ is a rigid extension of $X$. Thus $(Z,\kappa\arrowvert_X)$ is an injective envelope of $X$.
\end{proof}

\begin{remark}
    Considering Theorem~\ref{ginjenvelope}, Remark~\ref{r1}, and Lemma~\ref{ieopsyforuopsp} together, any injective envelope of an object $X$ in either a unital operator space category or an operator system category can be considered as either a $C^*$-algebra or an operator system and will be treated as such without comment.
\end{remark}


\subsection{From unital $G$-operator space categories to $G$-operator system categories} Consider the Arveson functor $\mcf_A$ from category $\mathsf{OSp}^{\1,G}_{(\bbf_1,\bbf_2)}$ to $\mathsf{OSy}^{G}_{(\bbf_1,\bbf_2)}$. Here only the intermediate case is considered. For the real and complex cases, see \cite[Proposition~4.9]{bck23}.

\begin{thm}
	Let $X\in\mathsf{OSp}^{\1,G}_{(\bbc,\bbr)}$ be injective. Let $\phi:\ell^{\infty}(G,X)\to X$ be a $G$-equivariant unital completely contractive idempotent $\bbr$-linear map onto $X$. Then setting $x\circ y\coloneqq\phi(xy)$ defines a multiplication on $X$, and $X$ together with this multiplication and it is usual $\ast$-operation is a complex $G$-$C^*$-algebra.
\end{thm}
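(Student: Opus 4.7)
The plan is to mirror the proof of Theorem~\ref{enviscalg1}, passing first to the real unital $G$-operator space category and then lifting the resulting structure back to the complex setting. By Lemma~\ref{ginttogreal}, forgetting the complex structure makes $X$ injective in $\mathsf{OSp}^{\1,G}_{(\bbr,\bbr)}$, and $\phi$ becomes a $G$-equivariant unital completely contractive idempotent $\bbr$-linear map from $\ell^{\infty}(G,X)$ onto $X$ in that category. The real $G$-analogue of the present statement (established in \cite[Proposition~4.9]{bck23}) then guarantees that $X$, equipped with $x\circ y\coloneqq\phi(xy)$ and its usual $*$-operation, is a real $G$-$C^*$-algebra.

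To upgrade this to a complex $G$-$C^*$-algebra, I would appeal to \cite[Theorem~4.14]{bck23}: the ambient complex operator space structure on $X$, together with the real $C^*$-algebra structure just produced, forces $X$ to be a unital complex $C^*$-algebra. The $G$-action, being by $\bbc$-linear unital surjective complete isometries on a complex $C^*$-algebra, is automatically by $*$-automorphisms (Kadison), and $\bbc$-bilinearity of the product is compatible with the $\bbr$-bilinear $C^*$-product produced in the previous step.

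The main technical delicacy is interpreting the expression $\phi(xy)$: since $\phi$ is defined on $\ell^{\infty}(G,X)$ and $xy$ need not lie in $X$ (let alone in $\ell^{\infty}(G,X)$), one first extends $\phi$ $G$-equivariantly to a unital completely contractive $\bbr$-linear map $\Phi : \ell^{\infty}(G,B(\mch)) \to X$. Such an extension exists by the injectivity hypothesis together with the $G$-equivariant unital complete isometry $\ell^{\infty}(G,X) \hookrightarrow \ell^{\infty}(G,B(\mch))$, and it still satisfies $\Phi\circ j=\textup{Id}_X$. One then reads $xy$ as the product $j(x)\,j(y)$ inside the $C^*$-algebra $\ell^{\infty}(G,B(\mch))$, so that $x\circ y=\Phi(j(x)\,j(y))$; the fact that this coincides with the Choi--Effros product from Theorem~\ref{enviscalg1} follows from the uniqueness of the $C^*$-product on an injective unital operator space (Remark~\ref{r1}).

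With this reading, $G$-equivariance of $\Phi$ together with the fact that $G$ acts by $*$-automorphisms on $\ell^{\infty}(G,B(\mch))$ gives directly
\[
g(x\circ y)=g\,\Phi(j(x)\,j(y))=\Phi\bigl(g(j(x))\,g(j(y))\bigr)=\Phi(j(gx)\,j(gy))=(gx)\circ(gy),
\]
while $g(x^*)=(gx)^*$ is preserved by the same $*$-automorphism argument on $\ell^{\infty}(G,B(\mch))$. This furnishes a $G$-equivariant path to the conclusion that sidesteps the need to invoke Kadison at the final step, and completes the proof that $X$ is a complex $G$-$C^*$-algebra.
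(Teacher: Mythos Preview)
Your argument is correct, and actually contains two viable routes. The paper's own proof takes the second one: it invokes Theorem~\ref{enviscalg1} (the paper's self-reference to ``Theorem~\ref{enviscalg2}'' is a typo) to conclude that both $X$ and $\ell^{\infty}(G,X)$ are complex $C^*$-algebras in the non-$G$ intermediate sense, so that the product $xy$ in the formula $x\circ y=\phi(xy)$ is the Choi--Effros product in $\ell^{\infty}(G,X)$; it then verifies $g(x\circ y)=(gx)\circ(gy)$ and $(x\circ y)^*=y^*\circ x^*$ by a two-line computation using $G$-equivariance and $*$-linearity of $\phi$. Your final paragraph is exactly this computation, except that you extend $\phi$ to $\Phi$ on $\ell^{\infty}(G,B(\mch))$ and use the ambient $C^*$-product there, which sidesteps having to first put a Choi--Effros product on $\ell^{\infty}(G,X)$; this is arguably cleaner and makes the meaning of ``$xy$'' unambiguous.

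Your \emph{primary} route, however, is genuinely different from the paper's: you descend via Lemma~\ref{ginttogreal} to $\mathsf{OSp}^{\1,G}_{(\bbr,\bbr)}$, invoke the real $G$-analogue from \cite{bck23} to get a real $G$-$C^*$-algebra, and then lift the complex structure via \cite[Theorem~4.14]{bck23}. This parallels exactly how Theorem~\ref{enviscalg1} is proved in the non-$G$ case, and has the advantage that the $G$-compatibility of the product comes for free from the real $G$-result rather than needing a separate check. The paper instead stays inside the intermediate categories, applies the non-$G$ result to $X$ and $\ell^{\infty}(G,X)$, and checks $G$-compatibility by hand. Both work; yours reuses more existing machinery, the paper's is more self-contained within this section.
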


\begin{proof}\label{enviscalg2}
    Note that the map $\phi$ always exists by the injectivity of $X$ in $\mathsf{OSp}^{\1,G}_{(\bbc,\bbr)}$. By Theorem~\ref{enviscalg2}, $X$ (resp.\ $\ell^{\infty}(G,X)$) is a unital complex $C^*$-algebra. It suffices to show that it is a unital complex $G$-$C^*$-algebra. Firstly, for $g\in G$ and $x,y\in X$, $g(x\circ y)=g\phi(xy)=\phi(g(xy))=\phi((gx)(gy))=(gx)\circ (gy).$ Secondly, for $x,y\in X$, $(x\circ y)^*=\phi(xy)^*=\phi((xy)^*)=\phi(y^*x^*)=y^*\circ x^*$.  
\end{proof}

\begin{prop}\label{mcfaginj}
    If $X\in\mathsf{OSp}^{\1,G}_{(\bbc,\bbr)}$ is injective, then $\mcf_A(X)$ is injective.
\end{prop}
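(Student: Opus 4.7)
The plan is to mirror the proof of Proposition~\ref{mcfainj} above, but keeping track of the $G$-equivariance throughout. The key structural input is the theorem immediately preceding this proposition: since $X$ is injective in $\mathsf{OSp}^{\1,G}_{(\bbc,\bbr)}$, $X$ is a (unital complex) $G$-$C^*$-algebra under the Choi--Effros-type product $x\circ y = \phi(xy)$ coming from a $G$-equivariant unital completely contractive idempotent $\bbr$-linear projection $\phi: \ell^\infty(G,X) \to X$. In particular $X$ is already self-adjoint, so $\mcf_A(X) = X + X^\star = X$ as a $G$-operator system.

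With that identification in hand, I would verify the injective extension property directly in $\mathsf{OSy}^{G}_{(\bbc,\bbr)}$. Let $\iota: Y \to W$ be a $G$-equivariant unital complete order embedding of complex $G$-operator systems, and let $\varphi: Y \to \mcf_A(X) = X$ be a $G$-equivariant unital completely positive $\bbr$-linear map. Since $Y$ and $W$ are unital $G$-operator spaces, $\iota$ is also a $G$-equivariant unital complete isometry, and since unital completely positive maps between operator systems are unital completely contractive, $\varphi$ is a $G$-equivariant unital completely contractive $\bbr$-linear map into the unital $G$-operator space $X$. The hypothesis that $X$ is injective in $\mathsf{OSp}^{\1,G}_{(\bbc,\bbr)}$ then furnishes a $G$-equivariant unital completely contractive $\bbr$-linear extension $\tilde\varphi: W \to X$ with $\tilde\varphi \circ \iota = \varphi$.

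The final step is to promote $\tilde\varphi$ from unital completely contractive to unital completely positive. This is standard for maps into an operator system: a unital completely contractive map with operator system codomain is automatically unital completely positive (for real or complex operator systems and for real-linear maps alike, this follows from the corresponding statement applied to each matrix amplification, since a unital contraction into a $C^*$-algebra preserves self-adjointness and positivity at the scalar level). Applied to $\tilde\varphi: W \to X = \mcf_A(X)$, this yields the desired $G$-equivariant unital completely positive $\bbr$-linear extension, proving injectivity of $\mcf_A(X)$ in $\mathsf{OSy}^{G}_{(\bbc,\bbr)}$.

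The only subtle point, and the one worth being explicit about, is that no separate appeal to $\ell^\infty(G,-)$ is needed: the $G$-equivariance of the extension is baked into the injectivity hypothesis on $X$ in the intermediate $G$-category, and the self-adjointness collapse $X + X^\star = X$ is what lets the argument reduce to a single extension step rather than requiring a separate analysis of the $^\star$-part of the codomain.
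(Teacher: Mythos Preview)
Your proof is correct and follows essentially the same route as the paper's: use the preceding Choi--Effros-type theorem to obtain $X+X^{\star}=X$, then pass from the ucp extension problem to the ucc extension problem via the standard equivalence of unital completely positive and unital completely contractive maps on operator systems. The paper's version is terser and leaves the final ucc$\,\Rightarrow\,$ucp promotion implicit, whereas you spell it out; but the argument is the same.
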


\begin{proof}
    Since $X$ is injective in $\mathsf{OSp}^{\1,G}_{(\bbc,\bbr)}$, by Theorem~\ref{enviscalg2}, $X$ is a $C^*$-algebra. This means that $X+X^{\star}=X$. Since every $G$-equivariant unital completely positive map is a $G$-equivariant unital completely contractive map, it follows that $X+X^{\star}$ is injective in $\mathsf{OSy}^{G}_{(\bbc,\bbr)}$.
\end{proof}

\begin{thm}
	Let $X\in\mathsf{OSp}^{\1,G}_{(\bbc,\bbr)}$ and let $(Z,\kappa)$ be an injective envelope of $X$. Then $(\mcf_A(Z),\mcf_A(\kappa))$ is an injective envelope of $\mcf_A(X)$.
\end{thm}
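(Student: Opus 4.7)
The plan is to adapt the argument of Theorem~\ref{ginjenvelope} to the $G$-equivariant setting, adding $G$-equivariance to each map involved and invoking the $G$-analogues of the supporting lemmas. First I would use Proposition~\ref{mcfaginj} to conclude that $\mcf_A(Z) = Z + Z^\star$ is injective in $\mathsf{OSy}^G_{(\bbc,\bbr)}$; in fact, by Theorem~\ref{enviscalg2}, $Z$ is already a $G$-$C^*$-algebra, so $Z + Z^\star = Z$. I would then verify that $\tilde{\kappa}:X+X^\star\to Z$ defined by $\tilde{\kappa}(x+y^*)=\kappa(x)+\kappa(y)^*$ is a $G$-equivariant unital completely positive $\bbr$-linear complete isometry. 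The $G$-equivariance is the only new point relative to Theorem~\ref{ginjenvelope}: since the $G$-action on $Z$ commutes with the involution (because $Z$ is a $G$-$C^*$-algebra by Theorem~\ref{enviscalg2}) and $\kappa$ is $G$-equivariant, one computes
\[
\tilde{\kappa}(g(x+y^*))=\tilde{\kappa}(gx+(gy)^*)=\kappa(gx)+\kappa(gy)^*=g\kappa(x)+(g\kappa(y))^*=g\tilde{\kappa}(x+y^*).
\]
Hence $(Z,\tilde{\kappa})$ is an injective extension of $X+X^\star$ in $\mathsf{OSy}^G_{(\bbc,\bbr)}$.

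Second, to upgrade this to an injective envelope, I would prove rigidity. Let $T:Z\to Z$ be any $G$-equivariant unital completely positive $\bbr$-linear map with $T\circ\tilde{\kappa}=\tilde{\kappa}$. Restricting this identity to $X$ and using $\tilde{\kappa}|_X=\kappa$ gives $T\circ\kappa=\kappa$. Because a $G$-equivariant unital completely positive map into an operator system is automatically a $G$-equivariant unital completely contractive map, $T$ is admissible as a morphism in $\mathsf{OSp}^{\1,G}_{(\bbc,\bbr)}$. Therefore, by the rigidity of $(Z,\kappa)$ as an injective envelope of $X$ in $\mathsf{OSp}^{\1,G}_{(\bbc,\bbr)}$, we get $T=\textup{Id}_Z$. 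This shows $(Z,\tilde{\kappa})=(\mcf_A(Z),\mcf_A(\kappa))$ is a rigid, hence essential, injective extension of $\mcf_A(X)$, completing the proof.

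The only real obstacle compared to the non-equivariant Theorem~\ref{ginjenvelope} is checking that the Arveson extension $\tilde{\kappa}$ remains $G$-equivariant, which in turn depends on knowing that the $G$-action on the intermediate injective $Z$ preserves the involution; this is guaranteed by the fact that $Z$ is a $G$-$C^*$-algebra, a consequence of Theorem~\ref{enviscalg2}. Once that point is settled, the rigidity step transfers verbatim from the non-equivariant case.
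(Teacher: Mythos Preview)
Your proposal is correct and follows essentially the same approach as the paper, which simply states that the result ``follows similarly to the proof of Theorem~\ref{ginjenvelope}, using Proposition~\ref{mcfaginj} and \cite[Lemma 1.3.6]{blm04}.'' You have spelled out the details the paper leaves implicit, including the verification that $\tilde{\kappa}$ is $G$-equivariant, which is a reasonable point to check explicitly.
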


\begin{proof}
    Follows similarly to the proof of Theorem~\ref{ginjenvelope}, using Proposition~\ref{mcfaginj} and \cite[Lemma 1.3.6]{blm04}.
\end{proof}

\begin{lemma}
    Let $X\in\mathsf{OSp}^{\1,G}_{(\bbc,\bbr)}$ and let $(Z,\kappa)$ be an injective envelope of $\mcf_A(X)$. Then $(Z,\kappa\arrowvert_X)$ is an injective envelope of $X$. 
\end{lemma}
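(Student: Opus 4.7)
The plan is to mimic the argument for the non-equivariant version in Lemma~\ref{ieopsyforuopsp}, carefully inserting $G$-equivariance at each step. First I would observe that by the $G$-version of Lemma~\ref{ffinj} (i.e.\ Lemma~\ref{ffinj} itself, applied in the intermediate $G$-category), since $Z$ is injective in $\mathsf{OSy}^{G}_{(\bbc,\bbr)}$ it is automatically injective in $\mathsf{OSp}^{\1,G}_{(\bbc,\bbr)}$ when regarded as a unital $G$-operator space. Moreover, $\kappa \arrowvert_X$ is a $G$-equivariant unital complete isometry because $\kappa$ is. Thus $(Z, \kappa\arrowvert_X)$ is already an injective extension of $X$ in $\mathsf{OSp}^{\1,G}_{(\bbc,\bbr)}$, and it remains to verify rigidity.

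For rigidity, I would let $T \colon Z \to Z$ be a $G$-equivariant unital completely contractive $\bbr$-linear map with $T \circ \kappa\arrowvert_X = \kappa\arrowvert_X$, and aim to conclude $T = \textup{Id}_Z$. The key observation is that $\mcf_A(\kappa) = \tilde{\kappa}$ agrees with $\kappa$ on $X$, since $\tilde{\kappa}(x+y^*) = \kappa(x) + \kappa(y)^*$ and $\tilde\kappa\arrowvert_X = \kappa\arrowvert_X$. Because $Z$ is a $C^*$-algebra (an operator system) and $T$ is a unital complete contraction into an operator system, $T$ is automatically unital completely positive and hence $\ast$-linear; the same is true of $\kappa$. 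Therefore for $x,y \in X$,
\begin{equation*}
(T \circ \tilde\kappa)(x+y^*) = T(\kappa(x)) + T(\kappa(y))^* = \kappa(x) + \kappa(y)^* = \tilde\kappa(x+y^*),
\end{equation*}
so $T \circ \tilde\kappa = \tilde\kappa$. By the rigidity of the injective envelope $(Z,\kappa)$ of $\mcf_A(X) = X + X^\star$ in $\mathsf{OSy}^{G}_{(\bbc,\bbr)}$, this forces $T = \textup{Id}_Z$. Hence $(Z, \kappa\arrowvert_X)$ is a rigid, and therefore essential and injective, extension of $X$ in $\mathsf{OSp}^{\1,G}_{(\bbc,\bbr)}$.

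The only subtle point — and where a reader might hesitate — is the step asserting that a unital complete contraction $T$ between the $C^*$-algebra $Z$ and itself is automatically $\ast$-linear. This is standard (a unital complete contraction on an operator system is unital completely positive, hence $\ast$-linear), but it does need to be invoked both for $T$ and for $\kappa$ in order to extend the equality from $X$ to $X + X^\star$. Once this is in place, the argument reduces to the non-equivariant Lemma~\ref{ieopsyforuopsp} with $G$-equivariance carried along at each step.
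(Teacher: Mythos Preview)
Your proposal is correct and follows essentially the same approach as the paper, which simply states that the result ``follows as in Lemma~\ref{ieopsyforuopsp}, considering the appropriate objects and morphisms.'' Your write-up is precisely a detailed execution of that, with $G$-equivariance carried along. One small notational quibble: since $\kappa$ here is already defined on $\mcf_A(X)=X+X^\star$, writing ``$\mcf_A(\kappa)=\tilde\kappa$'' is slightly off---what you mean is $\widetilde{\kappa\arrowvert_X}=\kappa$, which is the identity you actually use.
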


\begin{proof}
    Follows as in Lemma~\ref{ieopsyforuopsp}, considering the appropriate objects and morphisms.
\end{proof}


\subsection{From operator system categories to $G$-operator system categories} Let $G$ be a finite group and consider the functor $\mcf_G$ from the category $\mathsf{OSy}_{(\bbf_1,\bbf_2)}$ to the category $\mathsf{OSy}^{G}_{(\bbf_1,\bbf_2)}$.

\begin{thm}\label{trivginnj}
	Let $X\in\mathsf{OSy}_{(\bbf,\bbf)}$ and let $(Z,\kappa)$ be an injective  envelope of $X$. Then $(\mcf_G(Z),\mcf_G(\kappa))$ is an injective envelope of $\mcf_G(X)$.
\end{thm}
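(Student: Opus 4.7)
The plan is to unpack what $\mcf_G$ does: it leaves the underlying object and morphisms alone but declares the $G$-action trivial. So $\mcf_G(Z) = Z$, $\mcf_G(X) = X$, and $\mcf_G(\kappa) = \kappa$, with $g\cdot z = z$ and $g\cdot x = x$ for all $g \in G$. I need to establish (i) injectivity of $\mcf_G(Z)$ in $\mathsf{OSy}^G_{(\bbf,\bbf)}$, (ii) that $\mcf_G(\kappa)$ is a $G$-equivariant unital complete order embedding, and (iii) rigidity of the pair.

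Item (ii) is automatic since trivial $G$-actions make every linear map $G$-equivariant, and $\kappa$ is already a unital complete order embedding by hypothesis. For item (i), the main work is the averaging argument, which uses $|G| < \infty$. Given a $G$-equivariant unital complete order embedding $\iota: Y \to W$ between $G$-operator systems and a $G$-equivariant unital completely positive $\bbf$-linear map $\vphi: Y \to \mcf_G(Z)$, I first invoke injectivity of $Z$ in $\mathsf{OSy}_{(\bbf,\bbf)}$ to obtain a (not necessarily $G$-equivariant) unital completely positive extension $\tilde{\vphi}_0: W \to Z$. Then I average: define
\[
\tilde{\vphi}(w) = \frac{1}{|G|} \sum_{g \in G} \tilde{\vphi}_0(g^{-1} w).
\]
Being a convex combination of unital completely positive $\bbf$-linear maps, $\tilde{\vphi}$ is again unital completely positive. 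A substitution $g' = h^{-1}g$ gives $\tilde{\vphi}(hw) = \tilde{\vphi}(w)$, which, since the $G$-action on $Z$ is trivial, is exactly $G$-equivariance $\tilde{\vphi}(hw) = h\tilde{\vphi}(w)$. Finally, for $y \in Y$, $G$-equivariance of $\vphi$ into the trivial $G$-space $\mcf_G(Z)$ forces $\vphi(g^{-1}y) = \vphi(y)$, so $\tilde{\vphi}|_Y = \vphi$.

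For item (iii), rigidity, suppose $T: \mcf_G(Z) \to \mcf_G(Z)$ is a $G$-equivariant unital completely positive $\bbf$-linear map with $T \circ \mcf_G(\kappa) = \mcf_G(\kappa)$. Stripping the (trivial) $G$-structure, $T$ is just a unital completely positive $\bbf$-linear self-map of $Z$ with $T \circ \kappa = \kappa$. By rigidity of $(Z,\kappa)$ as an injective envelope of $X$ in $\mathsf{OSy}_{(\bbf,\bbf)}$, $T = \textup{Id}_Z$. Combined with (i) and (ii), $(\mcf_G(Z), \mcf_G(\kappa))$ is an injective and rigid (hence essential) extension of $\mcf_G(X)$, i.e., an injective envelope.

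The only nontrivial step is the averaging in (i), which is routine for finite $G$; the finiteness hypothesis is exactly what allows the uniform average to land in the operator system and preserve unital complete positivity. The rigidity step is essentially cost-free because trivial $G$-actions make $G$-equivariance vacuous on morphisms, reducing the question directly to the known rigidity of $(Z,\kappa)$.
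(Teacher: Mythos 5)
Your proof is correct and follows essentially the same route as the paper's: extend via injectivity of $Z$ in $\mathsf{OSy}_{(\bbf,\bbf)}$, average over the finite group $G$ to restore equivariance (the paper defers this to the averaging process of \cite{bck23}, while you write out the uniform average and verify it explicitly), and observe that rigidity in the $G$-category reduces to rigidity of $(Z,\kappa)$ in the plain category because the trivial action makes equivariance vacuous. No gaps.
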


\begin{proof}
	Assume $(Z,\kappa)$ is an injective envelope of $X$ in $\mathsf{OSy}_{(\bbf,\bbf)}$ and let $G$ act trivially on $X$ (and therefore also on $Z$). Then for any two $G$-operator systems $Y$ and $W$ such that $\phi:Y\to Z$ is a $G$-equivariant unital completely positive $\bbf$-linear map and $\iota:Y\to W$ is a $G$-equivariant unital $\bbf$-linear complete isometry, there exists a unital completely positive $\bbf$-linear map $\hat{\phi}:W\to Z$, when considering all spaces as operator systems and ignoring the $G$-equivariance of the maps. Next, consider the new diagram:
		\begin{center}
			\begin{tikzcd}
				Y \arrow[r, "\tilde{\phi}"] \arrow[d, "\tilde{\iota}",labels=left] & Z\\
				W \arrow[from=2-1,to=1-2,"\enspace\tilde{\hat{\phi}}",labels=below]& 
			\end{tikzcd}
		\end{center}
	where all maps are put through the ``averaging" process mentioned in \cite[Remark~3 after Lemma~4.4]{bck23}. But since $\phi$ and $\iota$ were already $G$-equivariant, $\tilde{\phi}=\phi$ and $\hat{\psi}=\psi$. Hence, we get that $Z$ is $G$-injective, or is equivalently injective in $\mathsf{OSy}^{G}_{(\bbf,\bbf)}.$ To show $(W,\kappa)$ is an injective envelope of $X$ in $\mathsf{OSy}^{G}_{(\bbc,\bbc)}$, it suffices to show that $(W,\kappa)$ is a rigid extension of $X$ in $\mathsf{OSy}^{G}_{(\bbf,\bbf)}$. Assume $T:Z\to Z$ is a $G$-equivariant unital completely positive $\bbf$-linear map such that $T\circ\kappa=\kappa$. By ignoring the $G$-equivariance of the maps and $G$-actions on the spaces, by using the injectivity of $Z$ in $\mathsf{OSy}_{(\bbf,\bbf)}$, it follows that $T=\textup{Id}_{Z}$. But since the maps are $G$-equivariant and the spaces are $G$-spaces, it follows that $(Z,\kappa)$ is a rigid extension of $X$ in $\mathsf{OSy}^{G}_{(\bbf,\bbf)}$, and hence is an injective envelope of $X$, in $\mathsf{OSy}^{G}_{(\bbf,\bbf)}$. 
\end{proof}

For the intermediate category, the same result holds using the previous theorem. 

\begin{thm}
	Let $X\in\mathsf{OSy}_{(\bbc,\bbr)}$ and let $(Z,\kappa)$ be an injective envelope of $X$. Then $(\mcf_G(Z),\mcf_G(\kappa))$ is an injective envelope of $\mcf_G(X)$.
\end{thm}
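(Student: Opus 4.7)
The plan is to mirror the proof of Theorem~\ref{trivginnj}, but with the morphisms only required to be $\bbr$-linear; equivalently, one can reduce to that theorem by first applying Corollary~\ref{f00injenv} to view $(Z,\kappa)$ as an injective envelope of $X$ in $\mathsf{OSy}_{(\bbr,\bbr)}$, invoking Theorem~\ref{trivginnj} with $\bbf=\bbr$ to get that $(\mcf_G(Z),\mcf_G(\kappa))$ is an injective envelope in $\mathsf{OSy}^G_{(\bbr,\bbr)}$, and then upgrading back to the intermediate $G$-category, using that $Z$ retains its original complex operator system structure. Either route, the work splits into verifying (a) injectivity of $\mcf_G(Z)$ in $\mathsf{OSy}^G_{(\bbc,\bbr)}$ and (b) rigidity of $(\mcf_G(Z),\mcf_G(\kappa))$.

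For (a), I would appeal to Proposition~\ref{crginjiff}: since $Z$ is already injective in $\mathsf{OSy}_{(\bbc,\bbr)}$ by hypothesis, it suffices to produce a $G$-equivariant unital completely positive $\bbr$-linear idempotent $\phi:\ell^{\infty}(G,Z)\to Z$ with $\phi\circ j=\textup{Id}_Z$. Because $G$ acts trivially on $Z$, the map $j(z)$ is the constant function with value $z$, and the natural choice $\phi(f)=|G|^{-1}\sum_{g\in G}f(g)$ is a convex combination of evaluation maps, hence unital, $\bbr$-linear, and completely positive. A change of variables $g\mapsto hg$ shows $\phi(hf)=\phi(f)=h\cdot\phi(f)$, so $\phi$ is $G$-equivariant into the trivially acted-on $Z$, and $\phi(j(z))=z$ by construction, giving the required idempotent.

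For (b), let $T:\mcf_G(Z)\to\mcf_G(Z)$ be any $G$-equivariant unital completely positive $\bbr$-linear map with $T\circ\mcf_G(\kappa)=\mcf_G(\kappa)$. Since the $G$-action on $\mcf_G(Z)$ is trivial, the $G$-equivariance condition is vacuous, and $T$ is simply a unital completely positive $\bbr$-linear map $Z\to Z$ satisfying $T\circ\kappa=\kappa$. Rigidity of $(Z,\kappa)$ as an injective envelope in $\mathsf{OSy}_{(\bbc,\bbr)}$ then forces $T=\textup{Id}_Z$, which finishes the proof. The one subtlety worth checking is that the averaging in step (a) preserves the $\bbr$-linearity, unitality, and complete positivity demanded by the intermediate category, and does not interact badly with the complex structure of $Z$; but since the averaging is taken with the real scalar $|G|^{-1}$ entirely inside $Z$ and commutes with complex scalar multiplication (which in turn commutes with the trivial $G$-action), this is routine, and I do not anticipate a genuine obstacle beyond this bookkeeping.
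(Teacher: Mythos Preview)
Your proof is correct. The rigidity argument (b) is exactly what the paper does: since the $G$-action on $Z$ is trivial, $G$-equivariance of $T$ is automatic, and rigidity in $\mathsf{OSy}_{(\bbc,\bbr)}$ finishes it.

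For injectivity (a), you take a slightly different but equivalent path. The paper verifies the extension property directly: given a $G$-equivariant extension problem, it forgets $G$-equivariance, uses injectivity of $Z$ in the non-$G$ category to produce an extension $\hat{\phi}$, and then averages $\hat{\phi}$ over $G$ to restore $G$-equivariance (as in Theorem~\ref{trivginnj}). You instead invoke the characterization in Proposition~\ref{crginjiff} and exhibit the averaging projection $\phi(f)=|G|^{-1}\sum_{g}f(g)$ on $\ell^{\infty}(G,Z)$ directly. Both arguments rest on the same finite-group averaging idea; your route is arguably cleaner because it packages the averaging once and for all in the conditional expectation onto constants, rather than averaging a new extension each time. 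The alternative route you sketch through Corollary~\ref{f00injenv} and Theorem~\ref{trivginnj} would also work, though the ``upgrading back'' step from $\mathsf{OSy}^G_{(\bbr,\bbr)}$ to $\mathsf{OSy}^G_{(\bbc,\bbr)}$ would need a line of justification (rigidity in the real $G$-category is a priori weaker than in the intermediate $G$-category, so one should observe, as you effectively do in (b), that the morphisms in the latter already are morphisms in the former).
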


\begin{proof}
	Assume $(Z,\kappa)$ is an injective envelope of $X$ in $\mathsf{OSy}_{(\bbc,\bbr)}$. Then for any two complex $G$-operator systems $Y$ and $W$ such that $\phi:Y\to Z$ is a $G$-equivariant unital completely positive $\bbr$-linear map and $\iota:Y\to W$ is a $G$-equivariant unital $\bbr$-linear complete isometry, there exists a unital completely positive $\bbr$-linear map $\hat{\phi}:W\to Z$, when considering all spaces as real operator systems and ignoring the $G$-equivariance of the maps. By a similar argument for the real case, it follows that $(Z,\kappa)$ is a rigid extension of $X$, and hence an injective envelope of $X$, in $\mathsf{OSy}^G_{(\bbc,\bbr)}$.
\end{proof}

\section{Properties of the considered functors}\label{s5}

Let $\mathsf{C},\mathsf{D}$ be two categories. Then a functor $F:\mathsf{C}\to\mathsf{D}$ is
\begin{itemize}
    \item \textit{faithful} when, for every pair of objects $X$ and $Y$ of $\mathsf{C}$ and every pair of morphisms $f,f':X\to Y\in\mathsf{C}$, $f=f'$ in $\mathsf{C}$ whenever $F(f)=F(f')$ in $\mathsf{D}$.
    
    \item \textit{full} when, for every pair of objects $X$ and $Y$ of $\mathsf{C}$ and every morphism $g:F(X)\to F(Y)\in\mathsf{D}$, there exists a morphism $f:X\to Y\in\mathsf{C}$ such that $F(f)=g$.

    \item \textit{essentially surjective} if for every object $Z$ of $\mathsf{D}$, there exists an object $X$ of $\mathsf{C}$ and an isomorphism in $\mathsf{D}$ from $F(X)$ to $Z$.
\end{itemize}

\subsection{Faithfulness of the functors}


\begin{remark}
    Every forgetful functor $\mcf_0$ is trivially faithful. All three $\mcf_G$ functors are trivially faithful.
\end{remark}


\begin{thm}
    All six cases of the $\mcf_A$ functor are faithful.
\end{thm}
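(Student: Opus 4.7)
The plan is to observe that the faithfulness of $\mcf_A$ is immediate from the explicit formula defining its action on morphisms, and that the same short argument works verbatim in all six cases (real, complex, and intermediate, with or without $G$-equivariance). Recall that, given a morphism $u: X \to Y$ in the source category (a unital completely contractive $\bbf_2$-linear, possibly $G$-equivariant, map), the functor produces $\tilde{u}: X + X^{\star} \to Y + Y^{\star}$ by $\tilde{u}(x + y^*) = u(x) + u(y)^*$ for $x, y \in X$.

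The key observation is that $X$ embeds into $X + X^{\star}$ via $x \mapsto x + 0^*$, and under this embedding $\tilde{u}$ restricts to $u$, since $\tilde{u}(x + 0^*) = u(x) + u(0)^* = u(x)$. So, suppose $u, v: X \to Y$ are two morphisms in one of the six source categories with $\mcf_A(u) = \mcf_A(v)$ as maps $X + X^{\star} \to Y + Y^{\star}$. Restricting both sides to the image of $X$ in $X + X^{\star}$ yields $u(x) = \tilde{u}(x + 0^*) = \tilde{v}(x + 0^*) = v(x)$ for every $x \in X$, whence $u = v$. This step uses nothing about the scalar field, the presence of a $G$-action, or the completely contractive/positive nature of the morphisms beyond the defining formula for $\tilde{u}$, so the six cases can be handled uniformly.

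There is essentially no obstacle here: the entire argument is a one-line restriction. The only subtlety worth flagging is that $\tilde{u}$ must genuinely be well-defined on $X + X^{\star}$, i.e.\ independent of the chosen decomposition $x + y^*$, but this is precisely the content of \cite[Lemma~1.3.6]{blm04}, invoked already in the construction of $\mcf_A$; it is a prerequisite for $\mcf_A$ to be a functor at all, not an issue one has to address when proving faithfulness.
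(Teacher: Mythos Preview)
Your proposal is correct and takes essentially the same approach as the paper: both argue that since $X$ sits inside $X+X^{\star}$ and $\mcf_A(f)$ restricts to $f$ on $X$, the equality $\mcf_A(f)=\mcf_A(f')$ immediately forces $f=f'$, with the $G$-equivariant cases handled identically.
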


\begin{proof}
    Let $X$ and $Y$ be objects from $\mathsf{OSp}^{\1}_{(\bbr,\bbr)}$ (resp.\ from $\mathsf{OSp}^{\1}_{(\bbc,\bbc)}$, $\mathsf{OSp}^{\1}_{(\bbc,\bbr)}$) and $f,f':X\to Y$ be morphisms from $\mathsf{OSp}^{1}_{(\bbr,\bbr)}$ (resp.\ from $\mathsf{OSp}^{\1}_{(\bbc,\bbc)}$, $\mathsf{OSp}^{\1}_{(\bbc,\bbr)})$. Assume $\mcf_A(f:X\to Y)=\mcf_A(f':X\to Y)$. But $\mcf_A(f:X\to Y)=\mcf_A(f):X+X^{\star}\to Y+Y^{\star}$ and $\mcf_A(f':X\to Y)=\mcf_A(f'):X+X^{\star}\to Y+Y^{\star}$. Since $\mcf_A(f)=\mcf_A(f')$ and $X$ is a subset of $X+X^{\star}$, $\mcf_A(f)\arrowvert_X=\mcf_A(f')\arrowvert_X$. But $\mcf_A(f)\arrowvert_X=f$ and $\mcf_A(f')\arrowvert_X=f'$. Hence $f=f'$, and it follows that $\mcf_A$ is faithful. The proof for the $G$-categories follow similarly.
\end{proof}

\begin{thm}
    The functor $\mcf_c$ in all twelve cases is faithful.
\end{thm}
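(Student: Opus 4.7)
The plan is to observe that faithfulness of $\mcf_c$ is essentially immediate in every one of the twelve cases, because the complexification of a morphism contains the morphism itself as its restriction to the ``real part''. Concretely, for any morphism $T: X \to Y$ in one of the source categories (real or intermediate, possibly $G$-equivariant, possibly unital, on operator spaces or systems), the definition of $\mcf_c$ gives $\mcf_c(T) = T_c : X_c \to Y_c$ with $T_c(x + \ii y) = T(x) + \ii T(y)$. In particular, under the canonical embedding $X \hookrightarrow X_c$ sending $x \mapsto x + \ii\, 0$, we have $T_c(x) = T(x)$ for every $x \in X$.

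Given this, the argument is uniform: fix objects $X, Y$ in the source category and morphisms $f, f' : X \to Y$, and suppose $\mcf_c(f) = \mcf_c(f')$ in the target category. Then for every $x \in X$,
\[
f(x) = f_c(x) = \mcf_c(f)(x) = \mcf_c(f')(x) = f'_c(x) = f'(x),
\]
so $f = f'$. The same line of reasoning applies whether the base field of the morphisms is $\bbr$ (as in the real-to-complex cases) or $\bbr$ on an already complex space (as in the intermediate-to-complex cases, where the underlying real operator space structure is what gets complexified); no structural property beyond $\bbr$-linearity of $f$ and $f'$ is used. Likewise, the $G$-equivariant, unital, and self-adjoint refinements play no role in this argument, since we only need equality of functions on the set $X$.

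Thus the only point worth recording is that all twelve versions of $\mcf_c$ are defined by the same formula on morphisms, and that formula restricts on $X$ to the identity on $f$. There is no real obstacle to the proof; it is a one-line verification that must simply be stated once and observed to cover all twelve entries of the functorial picture.
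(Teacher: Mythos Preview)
Your proposal is correct and follows essentially the same approach as the paper: both argue that $\mcf_c(f)\!\restriction_X = f$, so $\mcf_c(f)=\mcf_c(f')$ forces $f=f'$ upon restriction to $X$, and both note that this argument is uniform across all twelve cases (with the $G$-equivariant cases handled identically).
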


\begin{proof}
    Let $X$ and $Y$ be objects from $\mathsf{OSp}_{(\bbr,\bbr)}$ (resp.\ from $\mathsf{OSp}^{\1}_{(\bbr,\bbr)}$, $\mathsf{OSy}_{(\bbr,\bbr)}$) and let $f,f':X\to Y$ be morphisms from $\mathsf{OSp}_{(\bbr,\bbr)}$ (resp.\ from $\mathsf{OSp}^{\1}_{(\bbr,\bbr)}$, $\mathsf{OSy}_{(\bbr,\bbr)}$). Assume $\mcf_c(f:X\to Y)=\mcf_c(f':X\to Y)$. But $\mcf_c(f:X\to Y)=\mcf_c(f):X_c\to Y_c$ and $\mcf_c(f':X\to Y)=\mcf_c(f'):X_c\to Y_c$. Since $\mcf_c(f)=\mcf_c(f')$ and $X$ is a subset of $X_c$, $\mcf_c(f)\arrowvert_X=\mcf_c(f')\arrowvert_X$. But $\mcf_c(f)\arrowvert_X=f$ and $\mcf_c(f')\arrowvert_X=f'$. Hence $f=f'$, and it follows that $\mcf_c$ is faithful. The proof in the $G$-categories follow similarly.
\end{proof}

\subsection{Fullness of the functors} 

\begin{thm}
    The functor $\mcf_0$ from $\mathsf{OSy}_{(\bbc,\bbc)}$ to $\mathsf{OSp}^{\1}_{(\bbc,\bbc)}$ (resp.\ from $\mathsf{OSy}_{(\bbr,\bbr)}$ to $\mathsf{OSp}^{\1}_{(\bbr,\bbr)}$, from $\mathsf{OSy}_{(\bbc,\bbr)}$ to $\mathsf{OSp}^{\1}_{(\bbc,\bbr)}$, from $\mathsf{OSy}^G_{(\bbc,\bbc)}$ to $\mathsf{OSp}^{\1,G}_{(\bbc,\bbc)}$, from $\mathsf{OSy}^G_{(\bbr,\bbr)}$ to $\mathsf{OSp}^{\1,G}_{(\bbr,\bbr)}$, from $\mathsf{OSy}^G_{(\bbc,\bbr)}$ to $\mathsf{OSp}^{\1,G}_{(\bbc,\bbr)}$) is full.
\end{thm}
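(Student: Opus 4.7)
The plan is to exploit the standard equivalence cited early in the paper: a unital map between (real or complex) operator systems is completely positive if and only if it is completely contractive. Since every forgetful functor $\mcf_0$ acts as the identity on the underlying sets and maps, the job reduces to showing that a morphism in the target unital operator space category is automatically a morphism in the source operator system category.

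Concretely, fix operator systems $X$ and $Y$ in $\mathsf{OSy}_{(\bbf_1,\bbf_2)}$ (or $\mathsf{OSy}^{G}_{(\bbf_1,\bbf_2)}$), and let $g : \mcf_0(X) \to \mcf_0(Y)$ be a morphism in $\mathsf{OSp}^{\1}_{(\bbf_1,\bbf_2)}$ (resp.\ $\mathsf{OSp}^{\1,G}_{(\bbf_1,\bbf_2)}$). Then $g$ is a unital completely contractive $\bbf_2$-linear map (and, in the equivariant case, $G$-equivariant). I claim $g$ itself is the required morphism $f$ in the operator system category; equivalently, $g$ is automatically completely positive.

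In the cases $(\bbf_1,\bbf_2) = (\bbc,\bbc)$ and $(\bbr,\bbr)$ this is exactly the quoted fact (stated in the paragraph right after Theorem~\ref{ruan}): a unital completely contractive $\bbf$-linear map between $\bbf$-operator systems is completely positive. For the intermediate case $(\bbc,\bbr)$, regard $X$ and $Y$ as real operator systems by forgetting the complex structure; the positive cones in $M_n(X)$ and $M_n(Y)$ are unchanged by this restriction of scalars, and $g$ becomes a unital completely contractive $\bbr$-linear map between real operator systems. The real case of the quoted fact then yields that $g$ is completely positive, which is precisely what is needed for $g$ to be a morphism of $\mathsf{OSy}_{(\bbc,\bbr)}$. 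In each of the three $G$-equivariant variants the hypothesis already gives $G$-equivariance of $g$, and the complete positivity just established upgrades $g$ to a morphism in $\mathsf{OSy}^{G}_{(\bbf_1,\bbf_2)}$; applying $\mcf_0$ to this $g$ returns the original map, proving fullness.

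There is no real obstacle: once one accepts the unital-contractive-equals-completely-positive dictionary, fullness is essentially formal. The only place requiring any care is the intermediate $(\bbc,\bbr)$ case, where one has to notice that the definition of complete positivity of a map between complex operator systems does not require $\bbc$-linearity, and that restricting scalars to $\bbr$ does not alter the cones $M_n(X)_{+}$, so the real-case result transfers directly.
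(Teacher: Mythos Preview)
Your proof is correct and takes the same approach as the paper, which simply states in one line that this holds because a unital map between operator systems is completely contractive if and only if it is completely positive. Your write-up just fills in the details (in particular for the intermediate $(\bbc,\bbr)$ case) that the paper leaves implicit.
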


\begin{proof}
    This is because every unital map is completely contractive if and only if it is completely positive.
\end{proof}

\begin{remark}
    The functor $\mcf_0$ in all other cases than those above are trivially not full.
\end{remark}

\begin{remark}
    The functor $\mcf_G$ in all three cases are trivially full.
\end{remark}

\begin{thm} The functor $\mcf_A$ from $\mathsf{OSp}^{1}_{(\bbr,\bbr)}$ to $\mathsf{OSy}_{(\bbr,\bbr)}$ (resp.\ from $\mathsf{OSp}^{\1}_{(\bbc,\bbc)}$ to $\mathsf{OSy}_{(\bbc,\bbc)}$, from $\mathsf{OSp}^{\1}_{(\bbc,\bbr)}$ to $\mathsf{OSy}_{(\bbc,\bbr)}$, from $\mathsf{OSp}^{1,G}_{(\bbr,\bbr)}$ to $\mathsf{OSy}^G_{(\bbr,\bbr)}$, from $\mathsf{OSp}^{\1,G}_{(\bbc,\bbc)}$ to $\mathsf{OSy}^G_{(\bbc,\bbc)}$, from $\mathsf{OSp}^{\1,G}_{(\bbc,\bbr)}$ to $\mathsf{OSy}^G_{(\bbc,\bbr)}$) is not full.
\end{thm}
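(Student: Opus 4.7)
The plan is to show non-fullness by exhibiting a single explicit counterexample and then noting that it transfers to all six cases. Fullness of $\mcf_A$ would require every unital completely positive morphism $g\colon X+X^\star\to Y+Y^\star$ to be of the form $\mcf_A(f)$ for some unital complete contraction $f\colon X\to Y$. But by definition $\mcf_A(f)(x)=f(x)$ for all $x\in X$ (take $y=0$ in $\tilde{u}(x+y^*)=u(x)+u(y)^*$), so any such $f$ must agree with $g\arrowvert_X$, and in particular $g(X)\subseteq Y$. The strategy is therefore to choose $X$ and $Y$ with $X+X^\star=Y+Y^\star$ as operator systems but $X\not\subseteq Y$, and to take $g$ to be the identity on this common operator system.

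Concretely, I would take $X\subseteq M_2(\bbc)$ to be the unital operator subspace of upper triangular matrices and $Y\subseteq M_2(\bbc)$ the unital operator subspace of lower triangular matrices, each with the operator space structure inherited from $M_2(\bbc)$. A direct check gives $\mcf_A(X)=X+X^\star=M_2(\bbc)=Y+Y^\star=\mcf_A(Y)$. The identity map $g=\textup{Id}_{M_2(\bbc)}\colon\mcf_A(X)\to\mcf_A(Y)$ is unital completely positive and $\bbc$-linear, hence a morphism in $\mathsf{OSy}_{(\bbc,\bbc)}$. If some unital complete contraction $f\colon X\to Y$ satisfied $\mcf_A(f)=g$, then restricting to $X$ yields $f(E_{12})=g(E_{12})=E_{12}$, forcing the upper triangular matrix unit $E_{12}$ to lie in $Y$, which is false.

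The remaining five cases reduce to this one. For $\mathsf{OSp}^{\1}_{(\bbr,\bbr)}\to\mathsf{OSy}_{(\bbr,\bbr)}$ I would repeat the construction inside $M_2(\bbr)$. For the intermediate case $\mathsf{OSp}^{\1}_{(\bbc,\bbr)}\to\mathsf{OSy}_{(\bbc,\bbr)}$ the complex example works verbatim, since the identity on $M_2(\bbc)$ is in particular $\bbr$-linear and the same obstruction applies. For each of the three $G$-equivariant versions I would equip the spaces with the trivial $G$-action, which makes every map in the construction automatically $G$-equivariant, so the contradiction persists.

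There is no genuine obstacle in this argument. The only conceptual content is recognizing that two distinct unital operator subspaces can share the same Arveson envelope, so that $\mcf_A$ loses track of the precise position of $X$ inside $X+X^\star$; the rest is the routine verification that upper and lower triangular matrices in $M_2$ together generate $M_2$ as an operator system, and that the identity on any operator system is unital completely positive.
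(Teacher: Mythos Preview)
Your argument is correct and very close in spirit to the paper's, but the setup differs slightly. The paper also works inside $M_2$ with the upper triangular matrices $\mct_2$, but takes $X=Y=\mct_2$ and uses as the morphism $g$ the unital $*$-automorphism of $M_2$ given by conjugation with $\begin{bmatrix}0&1\\1&0\end{bmatrix}$; the contradiction is then $g(E_{12})=E_{21}\notin\mct_2$. You instead take $X$ upper triangular, $Y$ lower triangular, and $g=\textup{Id}_{M_2}$, obtaining the contradiction $E_{12}\notin Y$. The two are essentially the same example viewed through different lenses (your $Y$ is exactly the image of $\mct_2$ under the paper's $g$), and your version is marginally cleaner since the identity is trivially unital completely positive. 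Your reductions to the real, intermediate, and $G$-equivariant cases are fine and match what the paper does implicitly.
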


\begin{proof}
    Consider the unital operator space of upper triangular matrices $$\mct_2=\{ \begin{bmatrix}
        a_{11} & a_{12}\\
        0 & a_{22}
    \end{bmatrix}: a_{11},a_{12},a_{22}\in\bbc\}$$ in $M_2(\bbc)$ and consider the map $g:M_2(\bbc)\to M_2(\bbc)$ defined by $$g(\begin{bmatrix}
        a_{11} & a_{12}\\
        a_{21} & a_{22}
    \end{bmatrix})=
    \begin{bmatrix}
        0 & 1\\
        1 & 0
    \end{bmatrix}
    \begin{bmatrix}
        a_{11} & a_{12}\\
        a_{21} & a_{22}
    \end{bmatrix}
    \begin{bmatrix}
        0 & 1\\
        1 & 0
    \end{bmatrix}=
    \begin{bmatrix}
        a_{22} & a_{21}\\
        a_{12} & a_{11}
    \end{bmatrix}
    $$ for all $\begin{bmatrix}
        a_{11} & a_{12}\\
        a_{21} & a_{22}
    \end{bmatrix}\in M_2(\bbc)$. This map is unital and a *-isomorphism, hence also completely positive. Note that $M_2(\bbc)=\mct_2+\mct_2^{\star}$. If this functor was full, then there would exist a unital completely contractive map $f:\mct_2\to\mct_2$ such that $\mcf_A(f)=g$. But $g(\mct_2)=f(\mct_2)\subseteq\mct_2$ and $g(\begin{bmatrix}
        0 & 1\\
        0 & 0
    \end{bmatrix})=\begin{bmatrix}
        0 & 0\\
        1 & 0
    \end{bmatrix}\not\in\mct_2$, a contradiction. Hence, the functor is not full. 
\end{proof} 

\begin{thm}
    None of the 12 complexification functors are full.
\end{thm}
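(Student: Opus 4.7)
The plan is to exhibit, in each of the twelve cases, a morphism of the target category that is not in the image of $\mcf_c$. The guiding observation is this: if a target morphism $g \colon \mcf_c(X) \to \mcf_c(Y)$ satisfies $g = \mcf_c(f) = f_c$ for some source morphism $f \colon X \to Y$, then necessarily $g$ restricts to a map $X \to Y$ under the canonical embeddings $X \hookrightarrow X_c$, $Y \hookrightarrow Y_c$, since $f_c(x) = f(x) \in Y$ for all $x \in X$. Conversely, since morphisms of the target are automatically $\bbc$-linear, any $g$ with $g(X) \subseteq Y$ satisfies $g = (g|_X)_c$ by Theorem~\ref{ruan}. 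Therefore to refute fullness it suffices to produce one target morphism $g$ with $g(X) \not\subseteq Y$.

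For the four non-unital complexification functors (landing in $\mathsf{OSp}_{(\bbc,\bbc)}$ or $\mathsf{OSp}^{G}_{(\bbc,\bbc)}$, coming from either a real or an intermediate source), I would pick any nonzero object $X$ in the source with $Y = X$, equipped with the trivial $G$-action where applicable, and let $g \colon X_c \to X_c$ be multiplication by $\ii$. This is a $\bbc$-linear bijective isometry, and hence a morphism in the target, and it is $G$-equivariant because the $G$-action on $X_c$ is $\bbc$-linear. The canonical decomposition $X_c = X \oplus \ii X$ of real vector spaces gives $X \cap \ii X = \{0\}$, so $g(x) = \ii x \notin X$ for any nonzero $x \in X$; hence $g$ is not in the image of $\mcf_c$.

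For the eight unital complexification functors (landing in $\mathsf{OSp}^{\1}_{(\bbc,\bbc)}$, $\mathsf{OSy}_{(\bbc,\bbc)}$, or their $G$-variants), I would take $X = \bbc$ realized as the standard unital real operator system, e.g.\ $\bbc \hookrightarrow M_2(\bbr)$ via the usual matrix model; in the intermediate sources one retains the complex structure on $\bbc$ but recalls that $\mcf_c$ acts only via the underlying real structure. By Theorem~\ref{ruan} combined with the standard algebraic factorization $\bbc \otimes_{\bbr} \bbc \cong \bbc \oplus \bbc$, one has $\mcf_c(X) = X_c \cong \bbc \oplus \bbc$ as a complex commutative unital operator system, with the canonical image of $X$ identified with the real diagonal $\{(w, \bar{w}) : w \in \bbc\}$. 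I would then take $g \colon X_c \to X_c$ to be the unital $\ast$-endomorphism $(w_1, w_2) \mapsto (w_2, w_2)$. Then $g$ is $\bbc$-linear, unital, and a $\ast$-homomorphism of $C^*$-algebras, hence UCP (and therefore also unital completely contractive), and trivially $G$-equivariant; so it is a morphism in each of the eight unital target categories. However, $g$ sends $(\ii, -\ii) \in X$ to $(-\ii, -\ii)$, which is not of the form $(w, \bar{w})$ since $-\ii \neq \overline{-\ii}$, so $g(X) \not\subseteq X$ and $g$ is not a complexification.

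The main obstacle is bookkeeping rather than any serious mathematics: one must check, in each of the twelve cases, that the chosen $g$ is genuinely a morphism of its target category (unital, CP, or CC as required; $\bbc$-linear; $G$-equivariant) and that the identification $X_c \cong \bbc \oplus \bbc$ actually gives the unique reasonable complexification from Theorem~\ref{ruan}. The latter check amounts to a brief direct computation inside $M_2(\bbc) \subseteq M_2(M_2(\bbr))$.
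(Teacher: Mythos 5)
Your argument is correct, and it is genuinely different from the paper's: the paper disposes of this theorem in one line by citing \cite[Corollary~2.4]{bck23} (and \cite[Remark~2]{bck23} for the $G$-cases), i.e.\ it defers entirely to counterexamples established in the earlier paper for the real-to-complex functors, whereas you build explicit, self-contained counterexamples for all twelve functors at once. Your key reduction --- that any morphism in the image of $\mcf_c$ must carry the canonical copy of $X$ inside $X_c$ into the canonical copy of $Y$ inside $Y_c$, so it suffices to exhibit one target morphism moving $X$ off itself --- is sound, and both of your witnesses check out: multiplication by the (outer) $\ii$ on $X_c=X\oplus\ii X$ is a $\bbc$-linear complete isometry with $X\cap\ii X=\{0\}$, handling the four non-unital cases, and the unital $*$-homomorphism $(w_1,w_2)\mapsto(w_2,w_2)$ on $\bbc_c\cong\bbc\oplus_\infty\bbc$ sends $(\ii,-\ii)$ out of the diagonal copy $\{(w,\bar w)\}$, handling the eight unital cases (being a unital $*$-homomorphism it is simultaneously u.c.p.\ and unital completely contractive, so it is a morphism in every unital target category, and it is trivially $G$-equivariant). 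Two small remarks: the ``converse'' sentence invoking Theorem~\ref{ruan} is unnecessary for the proof (only the forward containment is used), and your approach has the incidental virtue of treating the six intermediate-source functors on exactly the same footing as the six real-source ones, which the paper's citation covers only implicitly.
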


\begin{proof}
    Follows by \cite[Corollary 2.4]{bck23} and in the $G$-case, \cite[Remark 2]{bck23}.
\end{proof}

\subsection{Essential surjectiveness of the functors}

\begin{thm}
    The functor $\mcf_0$ from $\mathsf{OSp}_{(\bbc,\bbc)}$ to $\mathsf{OSp}_{(\bbr,\bbr)}$ (resp.\ from $\mathsf{OSp}^{\1}_{(\bbc,\bbc)}$ to $\mathsf{OSp}^{\1}_{(\bbr,\bbr)}$, from $\mathsf{OSy}_{(\bbc,\bbc)}$ to $\mathsf{OSy}_{(\bbr,\bbr)}$, from $\mathsf{OSp}^G_{(\bbc,\bbc)}$ to $\mathsf{OSp}^G_{(\bbr,\bbr)}$, from $\mathsf{OSp}^{\1,G}_{(\bbc,\bbc)}$ to $\mathsf{OSp}^{\1,G}_{(\bbr,\bbr)}$, from $\mathsf{OSy}^G_{(\bbc,\bbc)}$ to $\mathsf{OSy}^G_{(\bbr,\bbr)}$) is not essentially surjective.
\end{thm}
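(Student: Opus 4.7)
The plan is to exhibit, in each of the six target real categories, a concrete object that cannot be isomorphic to $\mcf_0(X)$ for any $X$ in the corresponding complex category. The natural choice is the one-dimensional object $\bbr$ itself: as a real operator space in $\mathsf{OSp}_{(\bbr,\bbr)}$ and $\mathsf{OSp}^{\1}_{(\bbr,\bbr)}$, as a real operator system in $\mathsf{OSy}_{(\bbr,\bbr)}$, and (for the $G$-variants) equipped with the trivial $G$-action.

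First I would suppose, for contradiction, that there exists $X$ in the appropriate complex category together with an isomorphism $\phi:\mcf_0(X)\to\bbr$ in the target real category. The key observation is that, because $X$ is a complex object, multiplication by $\ii$ on $X$ descends to an $\bbr$-linear complete isometry $M_{\ii}:\mcf_0(X)\to\mcf_0(X)$ satisfying $M_{\ii}^2=-\textup{Id}_{\mcf_0(X)}$. Transport this through the isomorphism by setting
$$J\coloneqq \phi\circ M_{\ii}\circ\phi^{-1}:\bbr\to\bbr.$$
Then $J$ is an $\bbr$-linear bijection with $J^2=-\textup{Id}_\bbr$. But every $\bbr$-linear endomorphism of $\bbr$ is multiplication by a real scalar $j$, and $j^2=-1$ has no real solution, yielding a contradiction.

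The remaining verifications are routine. In the unital cases, one must note that $M_{\ii}$ itself need not be unital (indeed, $M_{\ii}(1)=\ii\neq 1$), so we use only that $\phi$ and $\phi^{-1}$ are $\bbr$-linear bijections and do not require any unitality of $J$; the obstruction $J^2=-\textup{Id}_\bbr$ is purely algebraic and independent of unitality. In the operator system cases, the composition $\phi\circ M_{\ii}\circ\phi^{-1}$ is still an $\bbr$-linear map $\bbr\to\bbr$, which again yields the same contradiction. For the $G$-categories, since $G$ acts trivially on $\bbr$ and $M_{\ii}$ is $G$-equivariant (multiplication by $\ii$ commutes with any $\bbc$-linear $G$-action on $X$), the transported map $J$ is automatically $G$-equivariant, so the same argument applies.

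The only real subtlety — and the place where I would most expect to have to be careful — is making sure that $M_{\ii}$ really is a morphism in each of the underlying categories after applying $\mcf_0$, so that the composition defining $J$ makes sense as a candidate morphism whose existence we are ruling out. But this is immediate since multiplication by $\ii$ on any complex operator space (resp.\ complex operator system) is a $\bbc$-linear, hence $\bbr$-linear, surjective complete isometry (resp.\ complete order isomorphism), and forgetting the complex structure preserves all the required properties.
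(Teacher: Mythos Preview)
Your core argument is correct and is genuinely different from the paper's. The paper simply exhibits the quaternions $\bbh$ as a real $C^*$-algebra that is not (isomorphic to the underlying real structure of) a complex $C^*$-algebra, deferring the details to \cite[Section~3]{b23}. Your choice of $\bbr$ is more elementary: it reduces everything to the purely linear-algebraic fact that a one-dimensional real vector space carries no $\bbr$-linear endomorphism squaring to $-\textup{Id}$, i.e.\ a complex vector space has even real dimension. This avoids any appeal to structural results about real versus complex $C^*$-algebras and handles all six cases uniformly.

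That said, your final paragraph is both unnecessary and partly false, and it contradicts the (correct) paragraph preceding it. You do \emph{not} need $M_{\ii}$ to be a morphism in any of the categories, and indeed it is not: $M_{\ii}$ is never unital (as you yourself note), and on a complex operator system it is \emph{not} a complete order isomorphism, since for instance $1\geq 0$ in $\bbc$ but $\ii\cdot 1=\ii$ is not positive. What your argument actually uses is only that $\phi$ and $\phi^{-1}$ are $\bbr$-linear bijections (automatic for isomorphisms in any of the target categories) and that $M_{\ii}$ is $\bbr$-linear with $M_{\ii}^2=-\textup{Id}$; the obstruction $J^2=-\textup{Id}_{\bbr}$ then lives entirely at the level of real vector spaces. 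Delete the last paragraph, or replace it with a one-line remark to this effect, and the proof is clean.
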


\begin{proof}
    There are real $C^*$-algebras which are not also complex $C^*$-algebras, giving examples for all the instances above; for example, the quaternions. Hence, the functor in the above cases are not essentially surjective. See \cite[Section~3]{b23} for more details.
\end{proof}

\begin{remark}
    The functor $\mcf_0$ from $\mathsf{OSp}_{(\bbc,\bbc)}$ to $\mathsf{OSp}_{(\bbc,\bbr)}$ (resp.\ from $\mathsf{OSp}^{\1}_{(\bbc,\bbc)}$ to $\mathsf{OSp}^{\1}_{(\bbc,\bbr)}$, from $\mathsf{OSy}_{(\bbc,\bbc)}$ to $\mathsf{OSy}_{(\bbc,\bbr)}$, from $\mathsf{OSp}^G_{(\bbc,\bbc)}$ to $\mathsf{OSp}^G_{(\bbc,\bbr)}$, from $\mathsf{OSp}^{\1,G}_{(\bbc,\bbc)}$ to $\mathsf{OSp}^{\1,G}_{(\bbc,\bbr)}$, from $\mathsf{OSy}^G_{(\bbc,\bbc)}$ to $\mathsf{OSy}^G_{(\bbc,\bbr)}$) is trivially essentially surjective.
\end{remark}

\begin{thm}
    The functor $\mcf_0$ from $\mathsf{OSp}^{\1}_{(\bbc,\bbc)}$ to $\mathsf{OSp}_{(\bbc,\bbc)}$ (resp.\ from $\mathsf{OSp}^{\1}_{(\bbc,\bbr)}$ to $\mathsf{OSp}_{(\bbc,\bbr)}$, from $\mathsf{OSp}^{\1}_{(\bbr,\bbr)}$ to $\mathsf{OSp}_{(\bbr,\bbr)}$, from $\mathsf{OSp}^{\1,G}_{(\bbc,\bbc)}$ to $\mathsf{OSp}^G_{(\bbc,\bbc)}$, from $\mathsf{OSp}^{\1,G}_{(\bbc,\bbr)}$ to $\mathsf{OSp}^G_{(\bbc,\bbr)}$, from $\mathsf{OSp}^{\1,G}_{(\bbr,\bbr)}$ to $\mathsf{OSp}^G_{(\bbr,\bbr)}$) is not essentially surjective.
\end{thm}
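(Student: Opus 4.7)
The plan is to exhibit, in each of the six target categories, a single concrete object that fails to be isomorphic to $\mcf_0(X)$ for any $X$ in the corresponding source. A uniform choice is $Z := c_0$, taken as a real or complex $C^*$-algebra as dictated by the category and endowed with the trivial $G$-action in the three $G$-variants. The obstruction will be a purely affine/convexity one: the closed unit ball of $c_0$ has no extreme points, while the unit of any unital operator space is always an extreme point of its closed unit ball.

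The first of these is elementary: given $a = (a_n) \in c_0$ with $\norm{a}_\infty = 1$, since $a_n \to 0$ there is some index $m$ with $|a_m| < 1$; setting $\epsilon := (1 - |a_m|)/2 > 0$, the elements $a \pm \epsilon e_m$ both lie in the closed unit ball of $c_0$, are distinct, and average to $a$. Elements of the unit ball with $\norm{a}_\infty < 1$ are handled identically. The second fact is standard: if $Y \subseteq B(\mch)$ is a unital embedding of a unital operator space and $1_Y = (x+y)/2$ with $x,y$ in the unit ball of $Y$, then extremality of $I_\mch$ in the unit ball of $B(\mch)$ forces $x = y = 1_Y$.

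An isomorphism in any of the six categories is a bijective morphism whose inverse is also a morphism; since the morphisms are $\bbf_2$-linear complete contractions (and $G$-equivariant in the $G$-variants), any such isomorphism is automatically a bijective $\bbf_2$-linear complete isometry. In particular it restricts to an affine bijection between the closed unit balls, which must preserve extreme points. Thus an isomorphism $c_0 \cong \mcf_0(Y)$ with $Y$ unital would yield, as the preimage of $1_Y$, an extreme point of the unit ball of $c_0$ — contradicting the previous paragraph. For the three $G$-variants, one further notes that any $G$-equivariant isomorphism from $c_0$ with trivial $G$-action onto $\mcf_0(Y)$ forces $G$ to act trivially on $\mcf_0(Y)$ as well, after which the non-equivariant argument applies verbatim.

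I do not expect any genuine obstacle: the entire argument condenses to the single convexity observation and applies uniformly across all six cases. The only nuance is that in the intermediate categories the isomorphisms are required to be only $\bbr$-linear rather than $\bbc$-linear, but extremality of a point in the unit ball is an affine invariant, so $\bbr$-linearity is already enough to preserve it.
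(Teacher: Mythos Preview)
Your proof is correct and takes a genuinely different route from the paper's. The paper exhibits the two-dimensional column space
\[
\mcc = \left\{ \begin{bmatrix} a_{11} & 0 \\ a_{21} & 0 \end{bmatrix} : a_{11},a_{21} \in \bbc \right\} \subset M_2(\bbc)
\]
and argues via the ternary envelope: if $\mcc \cong \mcf_0(Z)$ for some unital $Z$, then $\mct(\mcc) \cong \mct(Z)$, but $\mct(\mcc) = \mcc$ while $\mct(Z)$ is a unital $C^*$-algebra, and $\mcc$ (being $\ell^2_2$ at the first matrix level) is not isometric to the only two-dimensional $C^*$-algebra $\ell^\infty_2$.

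Your argument is more elementary: it invokes no operator-space machinery beyond the definition of isomorphism, reducing everything to the classical Banach-space fact that the unit ball of $c_0$ has no extreme points, together with the (also classical) fact that the identity of a unital Banach algebra---and hence the unit of any unital operator space viewed inside $B(\mch)$---is always extreme in the unit ball. This makes your proof shorter and self-contained. On the other hand, the paper's approach produces a finite-dimensional counterexample and illustrates how the ternary envelope functions as a complete-isometry invariant, which is thematically in line with the rest of the paper. Your remark about the $G$-variants is slightly overcomplicated: a $G$-equivariant isomorphism is in particular an isomorphism in the underlying non-$G$ category, so the extreme-point argument applies without first arguing that the action on $Y$ must be trivial.
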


\begin{proof}
    Consider the column space $$\mcc=\{ \begin{bmatrix}
    a_{11} & 0 \\
    a_{21} & 0
    \end{bmatrix} : a_{11},a_{22}\in\bbc
    \}$$ of $M_2(\bbc)$. If $\mcc\cong Z$ as operator spaces for some unital operator space $Z$, then their ternary envelopes are isomorphic. But $\mct(\mcc)=\mcc$ and $\mct(Z)$ is a $C^*$-algebra. Note that $\mcc$ is not a $C^*$-algebra since it is not self-adjoint. Furthermore, it is not completely isometric to a $C^*$-algebra because it is 2-dimensional and a Hilbert space, while the only 2-dimensional $C^*$-algebra is $\ell^{\infty}_2$ and it is not a Hilbert space.
\end{proof}

\begin{remark}
    The functor $\mcf_0$ from $\mathsf{OSy}^G_{(\bbc,\bbc)}$ to $\mathsf{OSy}_{(\bbc,\bbc)}$ (resp.\ from $\mathsf{OSy}^G_{(\bbc,\bbr)}$ to $\mathsf{OSy}_{(\bbc,\bbr)}$, from $\mathsf{OSy}^G_{(\bbr,\bbr)}$ to $\mathsf{OSy}_{(\bbr,\bbr)}$) is trivially essentially surjective.
\end{remark}

\begin{thm}
    All three cases of the functor $\mcf_G$ are not essentially surjective.
\end{thm}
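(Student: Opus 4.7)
The plan is to exhibit, for each of the three cases, a single $G$-object carrying a genuinely nontrivial $G$-action and to observe that no trivially-acted object can be $G$-equivariantly isomorphic to it. Implicit throughout is that $G$ is a nontrivial discrete group (for $G=\{e\}$ the functor $\mcf_G$ is essentially the identity and the statement would fail).

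First I would record the general obstruction. Suppose $X$ is an object in the domain of $\mcf_G$, so that $\mcf_G(X)$ carries the trivial $G$-action, and suppose $\psi\colon \mcf_G(X)\to Z$ is an isomorphism in the target $G$-category. Then $\psi$ is $G$-equivariant, so for every $x\in X$ and every $g\in G$,
\[
g\cdot\psi(x) \;=\; \psi(g\cdot x) \;=\; \psi(x),
\]
and since $\psi$ is surjective this forces the $G$-action on $Z$ to be trivial. Thus $\mcf_G$ can never reach any object with a nontrivial $G$-action, and essential surjectivity fails as soon as a single such object exists.

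Next I would produce such objects. Fix any $t_0\in G$ with $t_0\ne e$. For the complex case, consider $Z=\ell^{\infty}(G,\bbc)$ as a complex $G$-operator system with the left regular action $(t\cdot f)(s)=f(t^{-1}s)$; choosing $f=\chi_{\{e\}}$ gives $(t_0\cdot f)(t_0)=f(e)=1\ne 0 = f(t_0)$, so the action is nontrivial. For the real case, the same example with $\bbr$ in place of $\bbc$ works. For the intermediate case, the same complex $Z=\ell^{\infty}(G,\bbc)$ serves as an object in $\mathsf{OSy}^G_{(\bbc,\bbr)}$ with a nontrivial $G$-action. Applying the general observation above to each of these $Z$ in its respective category establishes the claim.

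The argument involves no real obstacle; the only delicate point is confirming that the morphisms in each target $G$-category are required to be $G$-equivariant (true by definition) and that ``isomorphism in $\mathsf{D}$'' in the definition of essential surjectivity therefore means a $G$-equivariant isomorphism, so that the intertwining identity above is forced.
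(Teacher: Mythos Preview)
Your argument is correct and follows the same idea as the paper's proof: both observe that every object in the image of $\mcf_G$ carries the trivial $G$-action, so any $G$-operator system with nontrivial action lies outside the essential image. Your version is simply more explicit---you spell out the intertwining identity forcing triviality, give the concrete witness $\ell^\infty(G,\bbf)$ rather than merely asserting such a $Z$ exists, and note the implicit hypothesis $G\neq\{e\}$---whereas the paper compresses all of this into two sentences.
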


\begin{proof}
    Let $Z$ be a real or complex $G$-operator system with non-trivial $G$-action. For any real or complex operator system $X$, applying the functor to $X$ makes it a $G$-operator system with the trivial  $G$-action, so $Z\not\cong\mcf_G(X)$.
\end{proof}


\begin{thm}
    The functor $\mcf_A$ is essentially surjective in all six cases.
\end{thm}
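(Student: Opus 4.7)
The plan is to exploit the fact that any object $Z$ in the target category $\mathsf{OSy}_{(\bbf_1,\bbf_2)}$ or $\mathsf{OSy}^{G}_{(\bbf_1,\bbf_2)}$ is already self-adjoint, so that it is its own Arveson extension. Concretely, given $Z$ an $\bbf_1$-operator system (possibly with a $G$-action), I will take $X$ to be $Z$ itself, viewed as a unital $\bbf_1$-operator space (that is, forgetting the self-adjoint structure). Then $X^{\star}$, the canonical adjoint copy inside the ternary or $C^*$-envelope sitting around $X$, coincides with $X$ itself because $X=Z$ is already self-adjoint. Hence $\mcf_A(X)=X+X^{\star}=Z$ as operator systems, and the identity map provides the required isomorphism in the target category.

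For the first three (non-equivariant) cases, this is essentially all there is to check: $\mcf_A(Z)$ is the \emph{essentially unique} operator system generated by $Z$ in the sense of \cite[1.3.7]{blm04}, so any self-adjoint unital operator space already qualifies and the construction returns the same object. In the three $G$-equivariant cases, the only additional point to verify is compatibility of the $G$-action. Recall that the $G$-action on $\mcf_A(X)$ was defined by $g(x+y^{*})=gx+(gy)^{*}$. Since $Z$ is a $G$-operator system, $G$ acts on $Z$ by unital completely positive $\bbf_2$-linear maps, which are automatically $^{*}$-linear, so $(gz)^{*}=g(z^{*})$ for every $z\in Z$ and $g\in G$. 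This means the induced $G$-action on $\mcf_A(Z)=Z$ coincides with the original $G$-action on $Z$, and the identity map is a $G$-equivariant unital complete order isomorphism.

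There is no real obstacle in any of the six cases; the argument is essentially a triviality once one notices that the target categories are full subcategories (in a loose sense) of the source categories via the forgetful assignment that ignores self-adjointness. The only place where one has to be a little careful is to check that the abstract formal adjoint $X^{\star}$ featured in the definition of $\mcf_A$ really does reduce to $X$ when $X$ is already an operator system, but this is built into the ``essentially unique'' clause of \cite[1.3.7]{blm04} together with \cite[Lemma~1.3.6]{blm04}.
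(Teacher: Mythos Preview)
Your proof is correct and takes essentially the same approach as the paper: given an operator system $Z$, forget down to the unital operator space $X=\mcf_0(Z)$ and observe that $\mcf_A(X)=X+X^{\star}=Z$ since $Z$ is already self-adjoint. The paper's proof is a one-liner to this effect; your version is simply more detailed, in particular in spelling out why the induced $G$-action on $\mcf_A(Z)$ agrees with the original one.
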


\begin{proof}
    Let $Z$ be a real or complex operator system and consider it as a unital operator space $X=\mcf_0(Z)$. Then $\mcf_A(X)\cong Z$.
\end{proof}

\begin{thm}
    All twelve cases of the functor $\mcf_c$ are not essentially surjective.
\end{thm}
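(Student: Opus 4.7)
The plan is to exhibit, for each of the twelve instances, a complex object in the codomain which is not completely isometrically isomorphic to $\mcf_c(X)$ for any $X$ in the domain. The key structural observation is that for every object $X$ in a real or intermediate domain category, the complexification $X_c = X + \ii X$ carries the canonical involution $\theta_{X_c}\colon x+\ii y \mapsto x-\ii y$; by the discussion preceding Theorem~\ref{ruan} and by \cite[Lemma~4.4]{bck23}, this $\theta_{X_c}$ is a $\bbc$-conjugate-linear complete isometry with $\theta_{X_c}^2 = \textup{Id}$ (unital and $\ast$-preserving in the unital operator space and operator system cases, and $G$-equivariant in the $G$-cases). Transporting $\theta$ across any complex isomorphism in the codomain, any object in the essential image of $\mcf_c$ must admit such a $\bbc$-antilinear complete isometric involution.

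I would then invoke \cite[Corollary~2.4]{bck23} (the same family of counterexamples used for the failure of fullness), which produces complex operator spaces admitting no $\bbc$-antilinear complete isometric automorphism squaring to the identity. Such an object lies outside the essential image of $\mcf_c\colon \mathsf{OSp}_{(\bbr,\bbr)} \to \mathsf{OSp}_{(\bbc,\bbc)}$, and equally outside the essential image of $\mcf_c\colon \mathsf{OSp}_{(\bbc,\bbr)} \to \mathsf{OSp}_{(\bbc,\bbc)}$, since the essential images coincide. To handle the unital operator space and operator system variants, I would promote the counterexample by composing with the unitization functor $\mcf_u$ and/or the Arveson functor $\mcf_A$, then check that the resulting object still admits no conjugation: the original counterexample sits canonically inside its unitization/Arveson envelope, and any hypothetical conjugation of the larger object restricts to a conjugation of the original, a contradiction.

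For the six $G$-equivariant cases, it suffices to equip each of the above counterexamples with the trivial $G$-action: if such a $Z$ were $G$-equivariantly isomorphic in the complex $G$-category to $\mcf_c(X)$ for some $X$, then forgetting the $G$-structure would yield a $\bbc$-antilinear complete isometric involution on $Z$, contradicting the non-equivariant obstruction. Alternatively, \cite[Remark~2]{bck23} supplies $G$-equivariant versions of the counterexamples directly.

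The main potential obstacle is the bookkeeping across the twelve categories: in each case one must verify that the chosen counterexample actually has the required structure (unital, self-adjoint, $G$-equivariant) and that any hypothetical conjugation on the enlarged object restricts to or induces one on the original space. Once the operator-space counterexample is in hand, these verifications are routine, resting on the naturality of unitizations, Arveson extensions, and trivial $G$-actions, all of which preserve the absence of a conjugate-linear complete isometric involution on the underlying operator space.
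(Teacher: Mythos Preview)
Your overall strategy---show that every object in the essential image of $\mcf_c$ carries a conjugate-linear completely isometric period-two involution and then exhibit a complex object with no such conjugation---is sound for the six $\mathsf{OSp}$ cases, and the paper proceeds similarly for the real$\to$complex functors (citing \cite[Remark~2 of Section~2]{bck23} rather than Corollary~2.4). One small correction: the essential images of the real and intermediate complexification functors do \emph{not} coincide; the intermediate image is strictly smaller (it factors through $\mcf_{00}$). This is harmless for your argument, since containment is all you need.

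Where your proposal diverges more substantially is in the intermediate$\to$complex cases and in the promotion to the unital and operator system categories. The paper's argument for the intermediate functors is much cleaner and avoids your promotion step entirely: by \cite[Lemma~5.1]{b23} one has $\mcf_c(X)\cong X\oplus_\infty\bar X$ whenever $X$ is already complex, and then $\bbc$---which is simultaneously a complex operator space, a unital operator space, and an operator system---cannot be of the form $Z\oplus_\infty\bar Z$ for dimension reasons. This single one-dimensional counterexample disposes of all six intermediate cases at once.

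Your promotion step, by contrast, has a genuine gap. You assert that a conjugation on $\mcf_u(X)$ or on $\mcf_A(X)=X+X^\star$ restricts to a conjugation on $X$, but a conjugate-linear unital complete isometry $\theta$ on $\mcu_{\rm d}(X)=\bbc 1+i_{12}(X)$ need not preserve the copy of $X$: fixing $1$ does not force $\theta$ to preserve any particular complement of $\bbc 1$, and you have no multiplicative structure available to pin down $i_{12}(X)$ intrinsically. The same issue arises for $X+X^\star$, where a $\ast$-preserving conjugation could in principle mix $X$ and $X^\star$. So the ``routine bookkeeping'' you anticipate is in fact the crux, and it is not clear it can be completed as stated. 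The paper sidesteps this entirely by choosing a counterexample ($\bbc$, or whatever \cite[Remark~2]{bck23} supplies) that already lives in the target category.
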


\begin{proof}
    For the functor going from the real categories to the complex categories, see \cite[Remark 2 of Section 2]{bck23}. The cases of the functor going from the intermediate categories to the complex categories follows by \cite[Lemma 5.1]{b23}, which says $\mcf_c(X)=X\oplus_{\infty}\bar{X}$ for a complex operator space (resp.\ unital operator space, operator system). But not every complex operator space (resp.\ unital operator space, operator system) is of this form. For example, there is no complex operator space (resp.\ unital operator space, operator system)$Z$ such that $\bbc\cong Z\oplus_{\infty} \bar{Z}$. Hence the functor in all of these cases are not essentially surjective.
\end{proof}


\section{Regarding the definition of an injective envelope}\label{s6}

Many of the notions considered throughout this paper are purely categorical and can be explored for any category one would like to examine. That is, an object $Z$ in a category $\mathsf{C}$ is \textit{injective} if for any objects $X$ and $Y$ in $\mathsf{C}$, any morphism $f:X\to Z$, and any monomorphism $\iota :X\to Y$, there exists a morphism $\tilde{f}:Y\to Z$ such that $\tilde{f}\circ\iota=f$. A pair $(Z,\iota)$ is an \textit{injective envelope} of an object $X\in\mathsf{C}$ if $Z$ is an injective object in $\mathsf{C}$ and $\iota:X\to Z\in\mathsf{C}$ is an essential embedding. The terms ``monomorphism'' and ``essential'' are also purely categorical.

For objects $X,Y,$ and $Z$ in a category $\mathsf{C}$, a morphism $f:X\to Y$ in $\mathsf{C}$ is:
\begin{itemize}
    \item a \textit{monomorphism} if for every object $Z\in\mathsf{C}$ and every pair of morphisms $g_1,g_2:Z\to X\in\mathsf{C}$, then $(f\circ g_1=f\circ g_2)\implies (g_1=g_2)$
    
    \item \textit{essential} if any morphism $g:Y\to Z\in\mathsf{C}$ is a monomorphism if and only if $g\circ f$ is a monomorphism.
\end{itemize} 

In the category $\mcb$ consisting of Banach spaces as objects and contractive continuous linear maps as morphisms, Pothoven showed in \cite[Proposition~1.13]{PothovenThesis} that morphisms in this category are monomorphisms if and only if they are one-to-one. Furthermore, he showed in \cite{Pothoven69} that extra conditions must be imposed on the monomorphisms in the definition of an injective envelope, or the only injective object is $\{0\}$. 

Throughout this paper and in Hamana's settings, there are extra conditions are imposed on the monomorphisms. It follows similarly to Pothoven's results that if these extra conditions are not imposed on the monomorphisms in these categories, then there will not be enough (or any) injective objects. The proofs of the following results are modeled after Pothoven's techniques in \cite{Pothoven69}. 

Note that a subspace $A$ of a topological space $X$ is called a \textit{retract of $X$} if there is a continuous map $f:X\to X$ (called a \textit{retraction}) such that:
\begin{enumerate}
    \item $f(x) \in A$ for all $x \in X$ and
    \item $f(a)=a$ all $a \in A$.
\end{enumerate}

\begin{lemma}
	$\bbc$ is a retract of every complex operator space. 
\end{lemma}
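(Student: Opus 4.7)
The plan is to produce, for each nonzero complex operator space $X$, a completely contractive idempotent $f \colon X \to X$ with one-dimensional range, so that the embedded copy of $\bbc$ inside $X$ is a (categorical/topological) retract via $f$. The zero operator space is vacuous, so I assume $X \neq \{0\}$.

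First, I would embed $\bbc$ into $X$. Pick any $x \in X$ with $\norm{x} = 1$ and set $i \colon \bbc \to X$, $i(\lambda) = \lambda x$. This is a complete isometry: since $\bbc$ is one-dimensional, $\norm{i}_{cb} = \norm{i} = \norm{x} = 1$, so $\bbc$ sits inside $X$ as the subspace $\bbc \cdot x$ via a completely isometric inclusion.

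Next, I build the retraction by Hahn--Banach. The linear functional $\lambda x \mapsto \lambda$ on $\bbc \cdot x$ has norm one, so by (complex) Hahn--Banach it extends to a bounded $\bbc$-linear functional $\phi \colon X \to \bbc$ with $\norm{\phi} = 1$ and $\phi(x) = 1$. Because the codomain $\bbc$ is one-dimensional, a standard fact (cb-norm equals norm for scalar-valued maps on an operator space) gives $\norm{\phi}_{cb} = \norm{\phi} = 1$. Define
\[
f \coloneqq i \circ \phi \colon X \to X.
\]
Then $f$ is completely contractive as a composition of complete contractions, its range is $i(\bbc) = \bbc \cdot x$, and for any $\lambda x \in \bbc \cdot x$ we have $f(\lambda x) = i(\phi(\lambda x)) = i(\lambda) = \lambda x$. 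Thus $f$ is a retraction of $X$ onto $\bbc \cdot x \cong \bbc$, exhibiting $\bbc$ as a retract of $X$ in the appropriate operator space sense.

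The only non-formal ingredient is the equality $\norm{\phi}_{cb} = \norm{\phi}$ for functionals $X \to \bbc$, and perhaps the observation that the operator space structure on $\bbc$ is canonical (so that ``embedding $\bbc$ into $X$'' is unambiguous); both are routine and I would cite them rather than prove them from scratch. No real obstacle is expected.
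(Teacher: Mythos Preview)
Your argument is correct and is essentially the paper's proof: pick a unit vector, produce a norming functional via Hahn--Banach, and use that linear maps with domain or codomain $\bbc$ satisfy $\norm{\cdot}_{cb}=\norm{\cdot}$ to get completely contractive section and retraction. One small quibble: the reason you give for $i$ being a complete \emph{isometry} only shows $\norm{i}_{cb}=1$, i.e.\ that $i$ is a complete \emph{contraction}; the retract conclusion needs only the latter, so the proof stands as written.
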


\begin{proof}
	Let $X$ be a nonzero operator space. Fix $x_0\in X\setminus\{0\}$ and let $x_0^*:X\to\bbc$ be a bounded linear map with $\norm{x_0^*}=1$ and $x_0^*(x_0)=\norm{x_0}$. Then $x_0^*$ is completely contractive and $x_0^*(x_0/\norm{x_0})=1.$ Define $f:\bbc\to X$ by $f(m)=mx_0/\norm{x_0}$. Then $f$ is completely contractive since it is contractive. Lastly, $x_0^*\circ f=\textup{Id}_{\bbc}$, hence $\bbc$ is a retract of $X$.
\end{proof}

\begin{lemma}
	In the category of operator spaces and completely contractive maps, $\bbc$ is not injective (in the purely categorical sense).
\end{lemma}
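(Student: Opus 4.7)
The plan is to exhibit an explicit monomorphism $\iota$ together with a completely contractive map $\varphi$ into $\bbc$ that admits no completely contractive extension, which directly contradicts the purely categorical definition of injectivity. The starting point is the observation (essentially Pothoven's \cite[Proposition~1.13]{PothovenThesis}, whose proof carries over verbatim to the operator space setting because one only uses linearity and one-to-oneness) that in the category of operator spaces with completely contractive maps, the monomorphisms are exactly the one-to-one completely contractive linear maps. Thus, to defeat injectivity, it suffices to produce any one-to-one completely contractive map whose range fails to be isometric.

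Concretely, I would fix a scalar $\lambda \in (0,1)$ and set $X = Y = \bbc$ (each with its canonical operator space structure). Define $\iota \colon X \to Y$ by $\iota(z) = \lambda z$. This map is $\bbc$-linear, one-to-one, and completely contractive (it is a strict contraction), hence a monomorphism in the category. Next, take $\varphi = \textup{Id}_{\bbc} \colon X \to \bbc$, which is a morphism in the category.

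Suppose for contradiction that $\bbc$ were injective in the purely categorical sense. Then there would exist a completely contractive $\bbc$-linear map $\tilde{\varphi} \colon Y \to \bbc$ such that $\tilde{\varphi} \circ \iota = \varphi$. Evaluating at $z = 1$ forces $\tilde{\varphi}(\lambda) = 1$, so by linearity $\tilde{\varphi}(w) = w/\lambda$ for every $w \in \bbc$. Consequently $\norm{\tilde{\varphi}}_{cb} = 1/\lambda > 1$, contradicting complete contractivity of $\tilde{\varphi}$. This contradiction shows $\bbc$ is not injective.

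The only point that requires any care is the claim that monomorphisms in this category are precisely the injective completely contractive maps; the main obstacle is merely transcribing Pothoven's argument (using $Z = \bbc$ and evaluating on the generator to separate $g_1$ and $g_2$) into the operator space category, which is routine since we make no use of higher matrix levels. Everything else is a direct scalar-level calculation.
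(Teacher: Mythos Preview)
Your argument is correct and takes a genuinely simpler route than the paper. The paper follows Pothoven's original Banach-space construction: it uses the one-to-one completely contractive map $g:\mathrm{Max}(\ell^1_{\bbn}(\bbc))\to\ell^1_{\bbn}(\bbc)$ given by $g((a_n))=(a_n/(n+1))$, pairs it with a norming functional $x^*$ at the point $x=(1/n!)$, and then appeals to \cite[Proposition~2]{Pothoven69} to conclude that no contractive (hence no completely contractive) $u$ can make the triangle commute. Your example lives entirely in dimension one: the scaling map $\iota(z)=\lambda z$ on $\bbc$ is a monomorphism simply because it is one-to-one, and the identity cannot factor through it contractively since the only candidate is multiplication by $1/\lambda$. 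This isolates the mechanism cleanly---in the purely categorical setting, strict contractions are monomorphisms that are not isometric embeddings, which is already fatal---and avoids any infinite-dimensional machinery or external citation. Note too that you do not actually need the full characterization of monomorphisms: you only use that one-to-one implies monomorphism, which is immediate (if $\iota g_1=\iota g_2$ then $\lambda g_1(z)=\lambda g_2(z)$ for all $z$). The paper's approach has the virtue of paralleling Pothoven's historical argument and showing how the Banach-space obstruction transfers, but for the bare statement your proof is more economical.
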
 

\begin{proof}
	Consider $\ell^1_{\bbn}(\bbc)$ with its dual operator space structure and define the map $g:Max(\ell^1_{\bbn}(\bbc))\to\ell^1_{\bbn}(\bbc)$ by $g((a_n))=(a_n/(n+1))$. Then $g$ is completely contractive by the Max operator space structure on $\ell^1_{\bbn}(\bbc)$. Next, let $x=(1/n!)$. Then $x\in\ell^1_{\bbn}(\bbc)$ as $\norm{x}_{\ell^{1}}=e-1$. Now, let $x^*:Max(\ell^1_{\bbn}(\bbc))\to\bbc$ be a linear functional with $\norm{x^*}=1$ and $x^*(x)=\norm{x}_{\ell^{1}}$. Then there is no morphism $u:\ell^1_{\bbn}(\bbc)\to\bbc$ such that the following diagram commutes:
	
	\begin{center}
		\begin{tikzcd}
			Max(\ell^1_{\bbn}(\bbc)) \arrow[r,"x^*"] \arrow[d,"g", labels=left] & \bbc\\
			\ell^1_{\bbn}(\bbc) \arrow[from=2-1,to=1-2, "\enskip u",labels=below,dashed]
		\end{tikzcd}
	\end{center}

As from \cite[Proposition 2]{Pothoven69}, no such contraction exists, much less a complete contraction. Thus, $\bbc$ is not injective.
\end{proof}

\begin{thm}\label{piosp}
		In any category with operator spaces as objects and completely contractive maps as morphisms, the only injective object is $\{0\}$ (in the purely categorical sense).
\end{thm}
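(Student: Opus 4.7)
My plan is to prove this by contradiction, combining the two preceding lemmas via the classical fact that a retract of an injective object is itself injective.

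Specifically, suppose toward a contradiction that $Z$ is a nonzero injective object in the category. Since $Z$ is nonzero, the preceding lemma applies: there exist completely contractive morphisms $i:\bbc\to Z$ and $r:Z\to\bbc$ with $r\circ i=\textup{Id}_{\bbc}$ (take $i(m)=m\,z_0/\norm{z_0}$ for some $z_0\in Z\setminus\{0\}$ and $r=z_0^*$ as in the proof of the first lemma). I will then use this to promote the injectivity of $Z$ to injectivity of $\bbc$.

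Concretely, suppose $X,Y$ are objects of the category, $\kappa:X\to Y$ is a monomorphism (in the purely categorical sense, so merely injective by Pothoven's characterization cited above), and $\vphi:X\to\bbc$ is a morphism. Form the composite $i\circ\vphi:X\to Z$, which is a morphism in the category. By the assumed injectivity of $Z$, there exists a morphism $\Phi:Y\to Z$ with $\Phi\circ\kappa=i\circ\vphi$. Now set $\tilde{\vphi}\coloneqq r\circ\Phi:Y\to\bbc$, which is a morphism by composition, and observe that
\[
\tilde{\vphi}\circ\kappa=r\circ\Phi\circ\kappa=r\circ i\circ\vphi=\textup{Id}_{\bbc}\circ\vphi=\vphi.
\]
Hence $\bbc$ is injective in the purely categorical sense, contradicting the previous lemma. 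Therefore no nonzero injective object can exist, and $\{0\}$ (which is trivially injective) is the only one.

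I do not foresee a genuine obstacle here, since the two key inputs (the retraction lemma and the non-injectivity of $\bbc$) have already been established; the only subtlety is making sure we really are working in the purely categorical setting, so that the existence of the extension $\Phi$ from the injectivity hypothesis on $Z$ does not implicitly require $\kappa$ to be a complete isometry but merely a categorical monomorphism. Since both the hypothesis on $Z$ and the desired conclusion for $\bbc$ are phrased with the same notion of monomorphism, the retract-pushforward argument transports injectivity cleanly, and the proof is complete.
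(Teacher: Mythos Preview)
Your proof is correct and follows essentially the same approach as the paper: both combine the two preceding lemmas via the standard categorical fact that a retract of an injective object is injective. The only difference is that you spell out the retract-of-injective argument explicitly, whereas the paper simply cites it as a known fact.
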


\begin{proof}
	If, for objects $A$ and $B$ in a category $C$, $A$ is a retract of $B$ and $B$ is injective, then $A$ is injective. Therefore, the result follows from the two previous lemmas.
\end{proof}

\begin{remark}
    This implies that in the case of the category consisting of $G$-operator spaces as objects and with $G$-equivariant completely contractive maps as morphisms, the only injective object is $\{0\}$.
\end{remark}

\begin{lemma}
	$\bbc$ is a retract of every operator system. 
\end{lemma}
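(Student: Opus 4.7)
The plan is to imitate the argument used in the previous operator-space lemma, substituting a \emph{state} of the operator system for the norm-one functional $x_0^\ast$ used there. The categorical meaning of ``retract'' in the preceding lemma is that there exist morphisms $f:\bbc\to X$ and $g:X\to\bbc$ with $g\circ f=\textup{Id}_\bbc$, and the present context (operator systems with unital completely positive maps) constrains $f$ essentially uniquely, so the task reduces to producing a single state on $X$.

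First I would produce a unital completely positive $\bbc$-linear functional $\tau:X\to\bbc$. Realizing $X$ as a unital self-adjoint subspace of $B(\mch)$ for some complex Hilbert space $\mch$, I simply pick any unit vector $\xi\in\mch$ and set $\tau(x)=\inp{x\xi}{\xi}$. This is the restriction to $X$ of a vector state on $B(\mch)$, hence unital completely positive. Alternatively, $1_X$ is an Archimedean order unit, and a Hahn--Banach/Arveson-type extension of the identity on $\bbc\cdot 1_X$ yields such a $\tau$.

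Next I would use $\tau$ to build the retraction. Let $\iota:\bbc\to X$ be the (forced) unital completely positive map $\iota(\lambda)=\lambda\,1_X$. Then $\tau\circ\iota=\textup{Id}_\bbc$, and the composition $r=\iota\circ\tau:X\to X$, given by $r(x)=\tau(x)\,1_X$, takes values in $\bbc\cdot 1_X$ and satisfies $r(\lambda\,1_X)=\lambda\,1_X$ for every $\lambda\in\bbc$. Since $r$ is unital completely positive, this exhibits $\bbc\cdot 1_X\cong\bbc$ as a retract of $X$ in exactly the same categorical sense as in the preceding lemma.

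There is no real obstacle here: the only nontrivial ingredient is the existence of a state on $X$, which is automatic from any concrete realization $X\subseteq B(\mch)$. Once $\tau$ is in hand the retraction is formal, and the same scheme would then plug into an analogue of Theorem~\ref{piosp} to conclude that in a category of operator systems with unital completely positive morphisms (with monomorphisms taken in the purely categorical sense), the only injective object is $\{0\}$.
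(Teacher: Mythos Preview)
Your proof is correct and follows the same approach as the paper: produce a state on $X$ and pair it with the unital embedding $\bbc\to X$ to get the retraction. The only difference is cosmetic---the paper obtains the state by Hahn--Banach extending the identity functional on $\bbc\,1_X$ (and then noting that a unital complete contraction into $\bbc$ is u.c.p.), while you take a vector state from a concrete realization $X\subseteq B(\mch)$.

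One small correction to your closing remark: operator systems are unital, so $\{0\}$ is not an object of the category $\mathsf{OSy}_{(\bbc,\bbc)}$. The paper's conclusion is accordingly that there are \emph{no} injective objects in the purely categorical sense, not that $\{0\}$ is the only one.
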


\begin{proof}
	Let $X$ be an operator system, $\iota:\bbc\1\to X$ the inclusion map, and consider the map $\phi:\bbc\1\to\bbc$ defined by $\phi(\lambda\1)=\lambda$. Then by the Hahn-Banach theorem, there exists an extension $\tilde{\phi}:X\to\bbc$ such that $\norm{\tilde{\phi}}=\norm{\phi}$ and $\tilde{\phi}\circ\iota=\phi$. Note that $\tilde{\phi}$ is completely contractive. Also, since $\tilde{\phi}$ is a unital c.c. map between operator systems, $\tilde{\phi}$ is u.c.p. Identifying $\bbc\1$ with $\bbc$, it follows that $\bbc$ is a retract of $X$.
\end{proof}

\begin{lemma}
	In the category of operator systems and u.c.p. maps, $\bbc$ is not injective (in the purely categorical sense).
\end{lemma}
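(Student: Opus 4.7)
The plan is to mimic Pothoven's argument from the preceding operator space lemma. Concretely, I will exhibit an operator system $V$, a u.c.p.\ injective map $g : V \to V$ (which is automatically a monomorphism in the category since every injective map is), and a u.c.p.\ map $\phi : V \to \bbc$ such that no u.c.p.\ map $\tilde{\phi} : V \to \bbc$ satisfies $\tilde{\phi} \circ g = \phi$. Since a u.c.p.\ map into $\bbc$ is precisely a state, the task reduces to producing a state on $V$ with no state extension along $g$.

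First I would fix $V = \bbc\1 + c_0(\bbn) \subseteq \ell^{\infty}(\bbn)$, the unital commutative $C^*$-subalgebra of constants plus null sequences (so, in particular, an operator system), and define
\[
g\bigl(\lambda\1 + (a_n)\bigr) \;=\; \lambda\1 + \bigl(\tfrac{a_n}{n+1}\bigr).
\]
This map is linear, unital, and injective. For positivity, observe that if $\lambda\1 + (a_n) \geq 0$, then letting $n \to \infty$ and using $a_n \to 0$ gives $\lambda \geq 0$; the rewriting $\lambda + \tfrac{a_n}{n+1} = \tfrac{n}{n+1}\lambda + \tfrac{1}{n+1}(\lambda + a_n)$ expresses each coordinate as a convex combination of nonnegatives. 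Commutativity of $V$ upgrades positivity to complete positivity, so $g$ is u.c.p.

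Next I would take the weights $c_n = 6/(\pi^2 n^2)$, chosen so that $\sum_n c_n = 1$, and set
\[
\phi\bigl(\lambda\1 + (a_n)\bigr) \;=\; \lambda + \sum_{n \geq 1} c_n a_n \;=\; \sum_{n \geq 1} c_n(\lambda + a_n).
\]
The series converges since $(a_n)$ is bounded and $(c_n) \in \ell^{1}$, the second expression makes positivity on $V_+$ immediate, and $\phi$ is unital because $\sum c_n = 1$. So $\phi$ is a state on $V$.

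The main obstacle---resolved by the bounded-vs-unbounded argument that drives Pothoven's proof---is the final step. By Gelfand duality, the character space of $V$ is the one-point compactification $\bbn \cup \{\infty\}$, so every state on $V$ has the form $\tilde{\phi}(\lambda\1 + (a_n)) = \lambda + \sum_n b_n a_n$ with $b_n \geq 0$ and $\sum_n b_n \leq 1$. Evaluating the identity $\tilde{\phi} \circ g = \phi$ on the basis vector $e_m \in c_0$ forces $b_m / (m+1) = c_m$, i.e., $b_m = 6(m+1)/(\pi^2 m^2)$; but $\sum_m b_m$ then diverges by comparison with the harmonic series, contradicting $\sum_m b_m \leq 1$. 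Hence no u.c.p.\ extension $\tilde{\phi}$ exists, and $\bbc$ fails to be categorically injective in the operator system category.
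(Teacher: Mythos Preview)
Your proof is correct. Both you and the paper work inside the same operator system $V=\bbc\1+c_0=c$ and exhibit an injective u.c.p.\ endomorphism together with a state that does not factor through it. The difference is in the choices: you transplant Pothoven's varying-divisor map $g(\lambda\1+(a_n))=\lambda\1+(a_n/(n+1))$ from the operator-space lemma and pair it with a summable-weight state $\phi=\sum c_n\,\mathrm{ev}_n$, so that the obstruction is the divergence of $\sum (n+1)c_n$. The paper instead takes the constant-divisor map $\phi(\lambda\1+(a_n))=\lambda\1+(a_n/2)$ and the single point-evaluation state $\pi_1$; the obstruction is then immediate, since any hypothetical lift $u$ would satisfy $u|_{c_0}=2\pi_1$ and $u(\1)=1$, whence $u\bigl((1-\tfrac{1}{n})_n\bigr)=1-2=-1$, contradicting positivity. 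Your route is a faithful adaptation of the previous lemma and makes the analogy transparent; the paper's route is shorter and avoids both the series bookkeeping and the appeal to the structure of the state space of $c$.
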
 

\begin{proof}
	Let $c$ denote the $C^*$-algebra (considered as an operator system) of all convergent sequences, consider the projection onto the first coordinate  $\pi_1:c\to\bbc$, and consider $\phi:c\to c$ such that $\phi((a_n)_n)=(a_n/2)_n$ for sequences $(a_n)_n$ in $c_0$ and $\phi(\lambda(1,1,\dots))=\lambda(1,1,\dots)$. The map $\phi$ is unital and it is one-to-one and positive. Indeed, for one-to-one, let $(a_n)_n$ and $(b_n)_n$ be two sequences in $c$ with limits $\lambda_1\text{ and }\lambda_2$, respectively. Then $(a_n)=(c_n)+\lambda_1\1$ and $(b_n)=(d_n)+\lambda_2 \1$ for $(c_n)_n\text{ and }(d_n)_n$ in $c_0$. Then, assuming $\phi(a_n)=\phi(b_n)$, it follows that $\lambda_1=\lambda_2$ and $c_n=d_n$ for all $n$. Hence, $(a_n)_n=(b_n)_n$.
	
	For positivity, let $(a_n)_n\in c_{+}$ with limit $\lambda$. Then there exists a sequence $(b_n)_n\in c_0$ such that $a_n=b_n+\lambda$ for all $n$ and $b_n\in\bbr$ for all $n$. Since $b_n+\lambda\geq 0$ for all $n$, $b_n/ 2 \geq -\lambda/ 2\geq -\lambda$ for all $n$, which gives that $b_n/ 2 + \lambda \geq 0$ for all $n$. Thus $\phi(a_n)=(\frac{b_n}{2}+\lambda)_n\in c_+$. 
 
 So, it follows that $\phi$ is completely positive. But, there is no morphism $u:c\to\bbc$ such that the following diagram commutes:
	
	\begin{center}
		\begin{tikzcd}
			c \arrow[r,"\pi_1"] \arrow[d,"\phi", labels=left] & \bbc\\
			c \arrow[from=2-1,to=1-2, "\enskip u",labels=below,dashed]
		\end{tikzcd}
	\end{center}

If there was, it would mean that $u\arrowvert_{c_0}=2\pi_1$ and $u(\lambda(1,1,\dots))=\lambda(1,1,\dots)$. Therefore, $u((1-1/n)_n)$ is not positive, so $u$ is not u.c.p. Thus, $\bbc$ is not injective.
\end{proof}

\begin{thm}
	In the category of operator systems and u.c.p. maps, there are no injective objects (in the purely categorical sense).
\end{thm}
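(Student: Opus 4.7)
The plan is to imitate verbatim the proof of Theorem~\ref{piosp}, the operator space analogue, using the two preceding lemmas as input. The decisive categorical fact is: if $A$ is a retract of $B$ in a category $\mathsf{C}$ (meaning there are morphisms $\iota:A\to B$ and $r:B\to A$ with $r\circ\iota=\textup{Id}_A$) and $B$ is injective in $\mathsf{C}$, then $A$ is also injective. This is a one-line check: given a monomorphism $m:X\to Y$ in $\mathsf{C}$ and a morphism $f:X\to A$, use injectivity of $B$ to extend $\iota\circ f$ to some $\tilde{g}:Y\to B$, and then $r\circ\tilde{g}:Y\to A$ is the desired extension of $f$.

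The argument then runs as follows. Suppose for contradiction that $X$ is some operator system which is injective (in the purely categorical sense) in the category of operator systems and u.c.p.\ maps. By the first of the two preceding lemmas, $\bbc$ is a retract of every operator system, so in particular $\bbc$ is a retract of $X$. Applying the categorical observation above, $\bbc$ would then be injective in this category. But this directly contradicts the second preceding lemma, which exhibits the diagram with $c$ and the map $\phi$ that cannot be filled in by a u.c.p.\ map into $\bbc$. Hence no such $X$ exists.

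It is worth noting that, unlike the operator space case treated in Theorem~\ref{piosp}, we cannot fall back on $\{0\}$ as a trivial injective: operator systems are unital by definition, so the zero space is not an object of this category. Therefore the conclusion is genuinely that the category contains \emph{no} injective objects whatsoever. I do not anticipate any real obstacle here, since the whole argument is a direct transcription of the proof of Theorem~\ref{piosp} with the two preceding lemmas substituted into the roles of their operator space counterparts.
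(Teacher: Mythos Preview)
Your proposal is correct and follows exactly the same approach as the paper: both argue that a retract of an injective is injective, then combine the two preceding lemmas to derive a contradiction from the existence of any injective operator system. Your added remark explaining why $\{0\}$ is unavailable here (operator systems being unital) is a nice clarification that the paper omits.
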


\begin{proof}
	If, for objects $A$ and $B$ in a category $C$, $A$ is a retract of $B$ and $B$ is injective, then $A$ is injective. Therefore, the result follows from the two previous lemmas.
\end{proof}

\begin{remark}
    The above proof also shows that there are no injective objects (in the purely categorical sense) in both the category consisting of unital $C^*$-algebra as objects and with u.c.p. maps as morphisms and the category consisting of unital operator spaces as objects and with u.c.c. maps as morphisms. It also implies that in the $G$-cases of these categories, there are no injective objects. 
\end{remark}

\appendix \section{The unitization of an operator space\\ By David P. Blecher}\label{appendix}
 
 \bigskip
 
 Henceforth let $X$ be a real or complex operator space (the other categories considered in the earlier part 
 of the paper will be similar).  We will frequently refer to \cite{blm04} for background facts.   This text assumes complex scalars,
but  the analogous results hold for  real spaces (see e.g.\ \cite{b23}).

One may ask, first, if there is a sensible functor $\mcf_u$, `adjoining an identity',  from the category $\mathsf{OSp}$ of (real or complex) operator spaces to the category $\mathsf{OSp}^{\mathbbm{1}}$ of 
unital (real or complex) operator spaces.    Secondly, one may ask if there is such a functor $\mcf_u$ such that $I(\mcf_u(X))$ has a nice relation to $I(X)$, or is 
at least recognizable and useful for the analysis of the underlying spaces.  

Adjoining a unit to an operator space is a delicate business.  (Operator systems are much more reasonable in this regard, indeed there has been 
considerable interest in this topic recently, for example \cite{cvs21} and references therein.)  It is not hard to show that there is a `biggest'  but not a `smallest' unitization of an operator space.  (To 
see that there is no `smallest unitization', consider the embedding $j_n(z) = -(z,(1-\frac{1}{n}) z)$ of $\bbc$ in $l^\infty_2$.  This  makes 
$l^\infty_2 = \bbc (1,1) + j_n(\bbc)$ a unitization.   However $\| (1,1) + j_n(1) \| = \frac{1}{n} \to 0$; so that there is no proper unitization of $\bbc$ whose  norm  is 
dominated  by all of these.) 

By a biggest  unitization we mean  a `universal'  unitization with the biggest norm (and matrix norms).
This turns out to be  $X^1 = X \oplus^1 \bbf$, with identity $(0,1)$.  Assuming that this is a unital operator space it is clear from the universal 
property of the $1$-direct sum operator space that 
any linear complete contraction $u : X \to B(H)$  has a unital completely  contractive extension $X^1 \to B(H)$.
To see that $X^1$ is a unital operator space one may appeal to p.\ 163 in \cite{pisier03} in the complex case.  The real case follows from the 
complex case via the embedding $X \oplus^1 \bbr \subset X_c \oplus^1 \bbc = (X \oplus^1 \bbr)_c$ (see \cite[Lemma 5.3]{b23}).
Certainly this unitization defines a functor $\mathsf{OSp} \to \mathsf{OSp}^1$.
 It is  a faithful functor  (and is easily seen not to be full nor essentially surjective).

Unfortunately this universal  or `biggest' unitization $X^1$ has serious issues from some perspectives. 
For example our goal here is to connect to injective envelopes, 
and the injective envelope $B = I(X^1)$ of $X^1$ seems  too artificial and `big'  to be useful in many situations.   Even in trivial cases such as $X = \bbc$ this 
 injective envelope $B$ is  a large and complicated space (it is the injective envelope of the continuous functions on the circle).

 Instead, we will use a different and more tractable unitization $\mcf_u : \mathsf{OSp} \to \mathsf{OSp}^{\mathbbm{1}}$, namely {\em the unique operator algebra unitization} of $X$, 
 which actually maps into the subcategory of unital operator algebras.   We recall from Meyer's theorem \cite{blm04} that  any operator algebra has a unique operator algebra unitization.
 Moreover any operator space $X$ is an operator algebra with the 
 zero product; and its unique unitization above is $\mcf_u(X) = {\mathcal U}_{\rm d}(X)$.
   To explain this and to define this construction  we first recall that for $X \subset B(H)$, the 
Paulsen system is the operator system
$$ 
\mcs(X)\, = \,  \left[ \begin{array}{ccl}
\bbc I & X \\
X^\star   & \bbc I \end{array} \right] \, = \, \biggl\{\left[
\begin{array}{ccl} \lambda I & x\\ y^* & \mu I \end{array} \right] \
:\ x,y \in X,\ \lambda,\,\mu\in\bbc\biggr\}
$$
in $M_2(B(H))$, where 
$I = I_H$. 
We also define operator algebras, the subalgebras 
$${\mathcal U}(X)  \, = \,  \left[ \begin{array}{ccl}
\bbc I & X \\
0   & \bbc I \end{array} \right] , \; \; \; \; {\mathcal U}_{\rm d}(X)  \, = \, \biggl\{\left[
\begin{array}{ccl} \lambda I & x\\ 0 & \lambda I \end{array} \right] \
:\ x \in X,\ \lambda \in\bbc\biggr\} .
$$
Paulsen's lemma shows that $\mcs(X)$, and hence $\mcu(X)$ and $\mcu_d(X)$, only depends on the operator space structure
of $X$, and not on its representation on $H$ (see e.g.\ 1.3.14 and 2.2.10 --2.2.12 in \cite{blm04}).    The canonical complete isometry
$i_{12} : X \to {\mathcal U}_{\rm d}(X)$ has range which is an operator algebra (a subalgebra)  with the trivial product.  Its unique unitization (i.e.\ $i_{12}(X) + \bbc \, I_2$) is 
${\mathcal U}_{\rm d}(X)$.

 By \cite[Lemma 1.3.15]{blm04} we obtain that $\mcf_u(X) =  {\mathcal U}_{\rm d}(X)$ is a functor from  $\mathsf{OSp}$  to $\mathsf{OSp}^{\mathbbm{1}}.$ 
 This functor  is faithful, but not full nor essentially surjective.   To see that it is not full
let $A = {\mathcal U}_{\rm d}(\bbf)$ and consider the unital map $u : A \to A$ taking $i_{12}(x)$ to $\frac{x}{2} I_2$.
This will be a complete contraction if and only if the functional on $A$ taking $a = \lambda I_2 + i_{12}(x)$ to $\lambda  + \frac{x}{2}$ is contractive.
However this is just $w^* a w$ where $w$ is the column vector with equal entries $\frac{1}{\sqrt{2}}$.
(We remark however that ${\mathcal U}_{\rm d}(X)$ may also be viewed as a functor from   $\mathsf{OSp}$ to the category $\mathsf{OA}^{\mathbbm{1}}$
 of unital operator algebras and completely contractive unital homomorphisms,
and it is faithful and full into the latter category.  Indeed it is not hard to show that it is the left adjoint to the forgetful functor from $\mathsf{OA}^{\mathbbm{1}}$  to $\mathsf{OSp}$.)

It is not true in general that $I(\mcf_u(X)) = \mcf_u(I(X))$, since the injective envelope of a unital operator space is a $C^*$-algebra
 and ${\mathcal U}_{\rm d}(X)$  is not.   
 However the injective envelope of  ${\mathcal U}_{\rm d}(X)$ is a useful object that we now determine.  We recall that the injective envelope of 
 a (real or complex)  operator space $X$ is usually studied in terms of the four corners of the extremely 
 important  injective $C^*$-algebra ${\mathcal E}(X) = I({\mathcal S}(X))$.
 Indeed    the $1$-$2$-corner of ${\mathcal E}(X)$ is precisely $I(X)$, and the other three  corners also play a profound role in the study of $I(X)$
 (see e.g.\ \cite[Chapter 4]{blm04}, for example Section 4.4, and 4.4.2 there).

 \begin{thm} \label{id1}    
    If $X$ is a (real or complex)  operator space then $I(\mcf_u(X)) = I({\mathcal U}_d(X)) = {\mathcal E}(X)$.  
    Thus the $1$-$2$ corner of the injective envelope of  $\mcf_u(X)$ in $\mathsf{OSp}^{\mathbbm{1}}_{(\mathbb{C},\mathbb{C})}$ 
    is an injective envelope of $X$ in $\mathsf{OSp}_{(\mathbb{F},\mathbb{F})}$.
\end{thm}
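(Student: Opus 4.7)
The plan is to establish the identity $I(\mcu_{\rm d}(X)) = \mce(X)$; once this is proved, the second assertion about the $1$-$2$ corner is immediate from the well-known identification (see \cite[Section~4.4]{blm04}, notably 4.4.2 there) of the $1$-$2$ corner of $\mce(X) = I(\mcs(X))$ with an injective envelope of $X$ in $\mathsf{OSp}_{(\bbf,\bbf)}$.

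By Lemma~\ref{ieopsyforuopsp}, an injective envelope of a unital operator space $V$ in $\mathsf{OSp}^{\mathbbm{1}}$ coincides, via restriction, with an injective envelope of $\mcf_A(V) = V + V^{\star}$ in $\mathsf{OSy}$. Applying this with $V = \mcu_{\rm d}(X)$ reduces the problem to showing that $\mce(X)$ is an injective envelope of $\mcf_A(\mcu_{\rm d}(X)) = \mcu_{\rm d}(X) + \mcu_{\rm d}(X)^{\star}$ in $\mathsf{OSy}$. The chain of unital complete order embeddings $\mcf_A(\mcu_{\rm d}(X)) \subseteq \mcs(X) \subseteq \mce(X)$ is clear from the matrix picture, and $\mce(X)$ is injective in $\mathsf{OSy}$ by construction; what remains is rigidity.

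Rigidity is the main obstacle. Suppose $T \colon \mce(X) \to \mce(X)$ is a u.c.p.\ self-map with $T|_{\mcf_A(\mcu_{\rm d}(X))} = \mathrm{Id}$; equivalently, $T$ fixes $I$, $i_{12}(X)$, and $i_{21}(X^{\ast})$. Because $\mce(X) = I(\mcs(X))$ is a rigid extension of $\mcs(X)$ in $\mathsf{OSy}$, it suffices to show $T(p) = p$, where $p \in \mcs(X)$ is the matrix unit. I would apply Kadison--Schwarz to $a = i_{12}(x)$ for each $x \in X$ to obtain
\[
T(i_{12}(x)^{\ast} i_{12}(x)) \ge i_{12}(x)^{\ast} i_{12}(x), \qquad T(i_{12}(x) i_{12}(x)^{\ast}) \ge i_{12}(x) i_{12}(x)^{\ast},
\]
where the right-hand sides read as $x^{\ast}x \cdot (I - p)$ and $xx^{\ast} \cdot p$ in a faithful representation of $\mce(X)$. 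The hard technical step is to combine these inequalities, for a rich enough family of $x \in X$ (using, for instance, an approximate identity for the corner $p\,\mce(X)\,p$ built inside the $C^*$-algebra $\mce(X)$ from products $i_{12}(x) i_{12}(x)^{\ast}$), together with $T(I) = I$, so as to sandwich $T(p)$ between $p$ and $p$. An alternative route is Hamana's minimal seminorm/idempotent construction: take a minimal u.c.p.\ idempotent $\Phi \colon \mce(X) \to \mce(X)$ extending $\mathrm{Id}_{\mcu_{\rm d}(X)}$, so that $\Phi(\mce(X))$ realizes $I(\mcu_{\rm d}(X))$ inside $\mce(X)$, and then argue via the $C^*$-structure that $\Phi$ must be the identity. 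Once $T(p) = p$ (equivalently $\Phi = \mathrm{Id}$), rigidity of $\mce(X)$ over $\mcs(X)$ yields $T = \mathrm{Id}_{\mce(X)}$, completing the identification of $I(\mcu_{\rm d}(X))$ with $\mce(X)$.
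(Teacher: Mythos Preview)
Your reduction is sound: by Lemma~\ref{ieopsyforuopsp} it suffices to show that $\mce(X)=I(\mcs(X))$ is a rigid extension of $\mcs_{\rm d}(X):=\mcf_A(\mcu_{\rm d}(X))$ in $\mathsf{OSy}$, and since $\mcs(X)=\mcs_{\rm d}(X)+\bbf p$ this amounts to showing $T(p)=p$ for any u.c.p.\ self-map $T$ of $\mce(X)$ fixing $\mcs_{\rm d}(X)$. However, this step is the entire content of the theorem, and neither of your two sketches closes it. The Kadison--Schwarz inequality gives only $T(i_{12}(x)i_{12}(x)^*)\ge i_{12}(x)i_{12}(x)^*$, not equality, so $i_{12}(X)$ is not yet known to lie in the multiplicative domain of $T$; without equality you cannot pass to products or to the TRO generated by $i_{12}(X)$. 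Moreover, even granting that $T$ dominates every $i_{12}(x)i_{12}(x)^*$, the corner $p\,\mce(X)\,p=I_{11}(X)$ is typically strictly larger than the norm closure of $\{xx^*:x\in X\}$ (for instance when $\overline{ZZ^*}$ is nonunital, as with $X=K(H)$), and $T$ has no a priori continuity for the strict or order topologies that would let an approximate-identity argument reach $p$. Your alternative via a minimal idempotent $\Phi$ has the same defect: the range $\Phi(\mce(X))$ is a $C^*$-algebra only under the Choi--Effros product $\Phi(\,\cdot\,\cdot\,)$, so containment of $i_{12}(X)$ does not place the \emph{ambient} products $i_{12}(x)i_{12}(y)^*$ inside it.

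The paper avoids this rigidity problem entirely by routing through the $C^*$-envelope. It first proves the auxiliary Lemma~\ref{dand12}, identifying $C^*_e(\mcu_{\rm d}(X))$ with the (unitized) linking $C^*$-algebra $\mcl(Z)^1$ of the ternary envelope $Z=\mct(X)$; this is done via the universal properties of $\mcu_{\rm d}(X)$, of $C^*_e$, and of the ternary envelope. One then has the chain
\[
I(\mcu_{\rm d}(X))=I\bigl(C^*_e(\mcu_{\rm d}(X))\bigr)=I(\mcl(Z)^1)=I(\mcl(Z))=I(\mcs(X)),
\]
where the last equality is established by sandwiching $\mcs(X)\subset M(\mcl(Z))\subset I(\mcs(X))$ and invoking rigidity of $I(\mcs(X))$ over $\mcs(X)$. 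The point is that rigidity is only ever applied over $\mcs(X)$ itself, never over the smaller system $\mcs_{\rm d}(X)$; the passage from $\mcu_{\rm d}(X)$ up to $\mcl(Z)$ is handled by $C^*$-envelope machinery rather than by a direct rigidity argument. If you want to salvage your approach, you would need an independent proof that $p$ lies in every injective $C^*$-subalgebra of $\mce(X)$ containing $\mcs_{\rm d}(X)$, and that seems to require essentially the linking-algebra analysis the paper carries out.
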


In order to prove this we  first 
 prove an analogous result for the ternary envelope of $X$  that we believe is new and   useful.

 \begin{lemma} \label{dand12} Let $X$ be a (real or complex)  operator space.  If $(Z,i)$ is a ternary envelope of $X$ 
then $C^*_e({\mathcal U}_{\rm d}(X)) \cong {\mathcal L}(Z)^1$ via a $*$-isomorphism 
mapping $i_{12}(x)$ to $i(x)$ for $x \in X$.    Here ${\mathcal L}(Z)$ is the linking $C^*$-algebra (see e.g.\ \cite[Section 
 8.3]{blm04}); with ${\mathcal L}(Z)^1 
= {\mathcal L}(Z)$ if the latter is already unital.  \end{lemma}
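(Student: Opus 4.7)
The plan is to identify $\mathcal{L}(Z)^1$ with $C^*_e(\mathcal{U}_d(X))$ by exhibiting a unital completely isometric embedding $\pi : \mathcal{U}_d(X) \to \mathcal{L}(Z)^1$ sending $i_{12}(x)$ to $i(x)$, whose image generates $\mathcal{L}(Z)^1$ as a $C^*$-algebra and has trivial Shilov boundary ideal.

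For the embedding, I would realize $Z$ as a concrete TRO in $B(K,K')$, so that $\mathcal{L}(Z) \subseteq B(K' \oplus K)$, and (in the non-unital case) pass to the concrete unitization $\mathcal{L}(Z)^1 = \mathcal{L}(Z) + \bbf \cdot I_{K' \oplus K}$. Define $\pi(\lambda I_2 + i_{12}(x)) = \lambda \cdot 1 + i(x)$, viewing $i(x)$ in the copy of $Z$ inside the $(1,2)$-corner of $\mathcal{L}(Z) \subseteq \mathcal{L}(Z)^1$. Because that corner has vanishing self-products, $\pi$ is a unital algebra homomorphism; and since $\pi \circ i_{12}$ equals the complete isometry $i$, Meyer's uniqueness of the operator algebra unitization (applied to $X$ with zero product; alternatively, a Paulsen-type comparison inside $\mathcal{S}(Z) \subseteq B(K' \oplus K)$) forces $\pi$ to be completely isometric. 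That $\pi(\mathcal{U}_d(X))$ generates $\mathcal{L}(Z)^1$ as a $C^*$-algebra is immediate, since the generated $*$-subalgebra contains $1$, $i(X)$, and $i(X)^*$, and iterated products of these exhaust $\mathcal{L}(Z)$ (by the defining property of the ternary envelope, $i(X)$ generates $Z$ as a TRO, and $Z$ generates $\mathcal{L}(Z)$ as a $C^*$-algebra).

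The main obstacle is verifying that the Shilov boundary ideal of $\pi(\mathcal{U}_d(X))$ in $\mathcal{L}(Z)^1$ is zero. Let $J \triangleleft \mathcal{L}(Z)^1$ be a closed two-sided ideal such that the quotient $q : \mathcal{L}(Z)^1 \to \mathcal{L}(Z)^1/J$ is completely isometric on $\pi(\mathcal{U}_d(X))$. Then $q|_Z$, being the restriction of a $*$-homomorphism, is a ternary morphism, and $q|_Z \circ i = q|_{i(X)}$ is a complete isometry. By the essentialness of the ternary envelope $(Z,i)$, the map $q|_Z$ is itself completely isometric, hence injective on $Z$; since an injective ternary morphism on $Z$ extends to an injective $*$-homomorphism of the linking algebra $\mathcal{L}(Z)$, we obtain $J \cap \mathcal{L}(Z) = 0$. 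If $\mathcal{L}(Z)$ is unital this gives $J = 0$ directly. If $\mathcal{L}(Z)$ is non-unital, an approximate-identity argument shows that any element of $J$ not lying in $\mathcal{L}(Z)$ would force $-1$ to act as a multiplier inside $\mathcal{L}(Z)$, contradicting non-unitality; hence $J \subseteq \mathcal{L}(Z)$, so again $J = 0$. The induced $*$-isomorphism $C^*_e(\mathcal{U}_d(X)) \to \mathcal{L}(Z)^1$ then carries $i_{12}(x)$ to $i(x)$ as required.
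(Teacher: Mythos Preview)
Your argument is correct and takes a genuinely different route from the paper's. The paper works ``from the $C^*$-envelope side'': it first derives a universal property of $C^*_e(\mathcal{U}_d(X))$ (any $C^*$-algebra generated by a unit together with a completely isometric corner-copy of $X$ surjects onto it), uses this to show that the TRO generated by $X$ inside $C^*_e(\mathcal{U}_d(X))$ is itself a ternary envelope of $X$, and then invokes the fact that a ternary isomorphism induces a $*$-isomorphism of linking algebras to identify $C^*_e(\mathcal{U}_d(X))$ with $\mathcal{L}(Z)^1$. You instead work ``from the linking-algebra side'': you embed $\mathcal{U}_d(X)$ directly into $\mathcal{L}(Z)^1$ via Meyer's theorem, check that the image generates, and then kill the Shilov boundary ideal by appealing to essentialness of the ternary envelope and the standard ideal structure of linking algebras.

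Both arguments pivot on the same essential/universal property of $(Z,i)$, but yours is more self-contained: it avoids the intermediate step of recognizing a ternary envelope inside $C^*_e(\mathcal{U}_d(X))$ and the appeal to \cite[Corollary 8.3.5]{blm04}. The paper's approach, on the other hand, makes the internal TRO structure of $C^*_e(\mathcal{U}_d(X))$ explicit, which is conceptually useful for the subsequent corner analysis in Theorem~\ref{id1}. One small remark: your parenthetical about ``$-1$ acting as a multiplier'' in the non-unital case is slightly imprecise --- what you actually get is that if $1+a \in J$ then $-a$ is a two-sided identity for $\mathcal{L}(Z)$, contradicting non-unitality --- but the standard argument you have in mind is clearly the right one.
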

 
   \begin{proof}  
   We note that ${\mathcal U}_{\rm d}(X)$ has the universal property that if $u : X \to B(K,H)$ is a complete contraction (resp.\ complete isometry) 
   then there exists a unital complete contraction (resp.\ complete isometry)  $\tilde{u} : {\mathcal U}_{\rm d}(X) \to B(H \oplus K)$ with $\tilde{u} \circ i_{12} = i_{12} \circ u$, where 
   $i_{12}$ denotes the 1-2-`corner map'.     This follows from  the universal property  of the Paulsen system.   We will not use this but
   these universal properties both
     characterize ${\mathcal U}_{\rm d}(X)$ (exercise). 

It follows from this universal property   and the universal property  of the $C^*$-envelope
that $C^*_e({\mathcal U}_{\rm d}(X))$ has the universal property that if $u : X \to B(K,H)$ is a complete isometry, and if $B$ is the $C^*$-subalgebra of $B(H \oplus K)$ 
generated  by the identity operator and $i_{12}(u(X))$, then there exists a unital surjective $*$-homomorphism $\pi : B \to C^*_e({\mathcal U}_{\rm d}(X))$ such that 
$\pi(i_{12}(u(x))) =  j (i_{12}(x))$  for $x \in X$, where $j :   {\mathcal S}_{\rm d}(X) \to C^*_e({\mathcal S}_{\rm d}(X)) = C^*_e({\mathcal U}_{\rm d}(X))$ is the canonical map.  

We claim that the latter property forces the TRO $W$ generated by 
$i_{12}(X)$ in $C^*_e({\mathcal S}_{\rm d}(X))$ to be a ternary envelope of $X$, i.e.\ effectively be $Z$.   To see this, let $E$ be the TRO generated by $u(X)$ in $B(K,H)$,
and note that $\pi \circ i_{12} : E \to C^*_e({\mathcal U}_{\rm d}(X))$ is a ternary morphism.  Since  
$\pi(i_{12}(u(x))) =  j (i_{12}(x))$ for $x \in X$,  the  range of $\pi \circ i_{12}$ 
is contained in $W$.   So $(W,i_{12})$ has   the universal property  of the  ternary envelope of $X$.  Hence there is a ternary isomorphism $\theta : W \to Z$ such that $\theta \circ i_{12}$ is the canonical 
embedding $i : X \to Z$.    

Next notice that if we take $u$ above to be  $i$, then the $*$-homomorphism $\pi$ above takes ${\mathcal L}(Z)^1$ onto $C^*_e({\mathcal U}_{\rm d}(X))$, with 
$\pi(i_{12}(i(x))) =  j (i_{12}(x)) \in W$.   Thus the ternary morphism $\pi \circ i_{12} : Z \to W$ is surjective.   It is also one-to-one
since  $\theta (\pi(i_{12}(i(x))) = \theta (j (i_{12}(x))) = i(x)$.   Indeed  this ternary morphism is $\theta^{-1}$.   Any ternary 
isomorphism $\rho : Z \to W$ between TRO's induces a unique $*$-isomorphism  ${\mathcal L}(Z) \to {\mathcal L}(W)$ 
mapping $i_{12}(z)$ to $\rho(z)$ for $z \in Z$ (see \cite[Corollary 8.3.5]{blm04}).   By this uniqueness, this $*$-isomorphism is the restriction of $\pi$ to ${\mathcal L}(Z)$.
If the latter is non-unital  it follows that 
$\pi$ is a $*$-isomorphism.     If it is unital then so is the $C^*$-algebra $D$ generated by 
$i_{12}(X)$ in $C^*_e({\mathcal U}_{\rm d}(X)$.  In this case $\bbf 1_D +  i_{12}(X) = {\mathcal U}_{\rm d}(X))$ completely isometrically
by Meyer's theorem if necessary, and 
 we may take $D = C^*_e({\mathcal U}_{\rm d}(X))$.   
\end{proof}  

\begin{proof}[Completion of proof of Theorem \ref{id1}:] 
    
   Let  $Z = {\mathcal T}(X)$, a TRO.  By the lemma, 
$$C^*_e({\mathcal U}_{\rm d}(X)) = {\mathcal L}(Z)^1,$$ and so $$I({\mathcal U}_d(X)) = I(C^*_e({\mathcal U}_{\rm d}(X))) =I({\mathcal L}(Z)^1) =
I({\mathcal L}(Z)) =
I({\mathcal S}(X)).$$ 
The second last equality follows from e.g.\ \cite[Corollary 4.2.8 (2)]{blm04}.  The last equality is in \cite[Corollary 1.8 (iii)]{bp01} but we give a proof to check its veracity in the real case too.
Note that  ${\mathcal L}(Z)$ is a $C^*$-subalgebra of $I({\mathcal S}(X))$.   The identity projections of $I_{11}(X)$ and $I_{22}(X)$ (see e.g.\ 4.4.2 in 
\cite{blm04}) 
act as multipliers of ${\mathcal L}(Z)$ inside $I({\mathcal S}(X))$.    We have ${\mathcal S}(X) \subset 
{\mathcal S}(Z) \subset M({\mathcal L}(Z)) \subset I({\mathcal S}(X))$.   The well known fact that  the multiplier algebra of a 
$C^*$-algebra $B$ is a $*$-sub-algebra of $I(B)$ was first noted by Hamana.  In the real case it can be seen from
$$M(B) \subset \mcm_\ell(B) \subset I_{11}(B) = I(B),$$
the last equality from 4.4.13 in \cite{blm04}. 
Moreover $B \subset B^1 \subset M(B) \subset I(B)$, and any UCP map on $I(B)$   fixing $M(B)$  also fixes $B^1$.  So by rigidity, 
$I(M(B)) = I(B)$.  Thus 
 $I(M({\mathcal L}(Z))) = I({\mathcal L}(Z))$.   Similarly, a UCP map on $I({\mathcal S}(X))$ fixing $M({\mathcal L}(Z))$  also fixes ${\mathcal S}(X)$.  So by rigidity, 
$I(M({\mathcal L}(Z))) = I({\mathcal S}(X))$.   Thus $I({\mathcal L}(Z)) =  I({\mathcal S}(X))$.
\end{proof}

The variant of the above results with the subscript $d$ deleted is well known, indeed we can use ${\mathcal U}(X)$ and  ${\mathcal S}(X)$ somewhat interchangeably, by Arveson's result \cite[Lemma 1.3.6]{blm04}. 
It is somewhat surprising that the injective and $C^*$-envelopes of ${\mathcal U}_d(X)$ (and  hence of ${\mathcal S}_d(X)$) is so large. 
We remark that $C^*_e({\mathcal U}_d(X)) \neq C^*_e(\mcs(X))$ in general (a counterexample is $X$ the $C^*$-algebra of compact operators).

We sketch proofs of how the above works out in some of the other categories studied in this paper.
For a complex operator space let $B$ be an injective envelope of  $\mcf_u(X)$ in $\mathsf{OSp}^{\mathbbm{1}}_{(\mathbb{C},\mathbb{R})}$.  
Then  $B$ is an injective envelope of  $\mcf_u(X)$ in $\mathsf{OSp}^{\mathbbm{1}}_{(\mathbb{R},\mathbb{R})}$ (it is real injective, and real rigid, see Corollary~\ref{f00injenv}).     
Its $1$-$2$ corner, which is
a complex injective operator space, is a 
 real injective envelope of $X$, and is easily seen to be an  injective envelope of $X$ in $\mathsf{OSp}_{(\mathbb{C},\mathbb{R})}$.
 Indeed it is complex injective by \cite{bck23}, and is easily seen to be $(\mathbb{C},\mathbb{R})$-essential. 

 Next let $X$ be in $\mathsf{OSp}^{G}_{(\mathbb{F},\mathbb{F})}$.  Then $\mcs(X)$ is a $G$-operator system, where the $G$-action by unital complete order 
isomorphisms is the extension of the 
 $G$-action on $X$ by \cite[Lemma 1.3.15]{blm04}.    Therefore ${\mathcal U}_{\rm d}(X)$ and ${\mathcal U}(X)$ are unital  $G$-operator spaces. 
  Any $G$-injective envelope  of ${\mathcal U}(X)$, which with its unique $C^*$-algebra structure 
  is by \cite[Proposition 4.9]{bck23} a $G$-operator system injective envelope of $\mcs(X)$, 
   is a unital $G$-$C^*$-algebra whose $1$-$2$ corner 
 is a $G$-injective envelope of $X$ (see \cite[Theorem 4.3]{hamana11} and its proof).   
 This equals $I_G({\mathcal U}_{\rm d}(X))$ if $G$ is finite.
 One ought to be able to show that the latter is also true for infinite $G$ 
 by modifying the proofs of A.1 and A.2 above.  In the case of A.1 this seems unproblematic, however   
 modifying Theorem A.2  is related to finding equivariant versions of the results cited in the proof of that theorem, 
 which seems like an  interesting project.   As above,  the  $1$-$2$ corner of the injective envelope $B$ 
 of  ${\mathcal U}(X)$ in $\mathsf{OSp}^{\mathbbm{1},G}_{(\mathbb{C},\mathbb{R})}$ is 
 an  injective envelope of $X$ in $\mathsf{OSp}^{\mathbbm{1},G}_{(\mathbb{C},\mathbb{R})}$. 

\bibliographystyle{abbrv}
\bibliography{main}

\end{document}